\definecolor{dark-red}{rgb}{0.4,0.15,0.15}
\definecolor{dark-blue}{rgb}{0.15,0.15,0.4}
\definecolor{medium-blue}{rgb}{0,0,0.5}
\newtheorem{theorem}{Theorem}[section]
\newtheorem{lemma}[theorem]{Lemma}
\newtheorem{corollary}[theorem]{Corollary}
\theoremstyle{remark}
\newtheorem{remark}[theorem]{Remark}
\newcommand{\GL}{\mathrm{GL}}
\newcommand{\SL}{\mathrm{SL}}
\DeclareMathOperator{\ad}{ad}
\DeclareMathOperator{\sgn}{sgn}
\newcommand*\pFq[4]{ {}_{#1}F_{#2}\left(\genfrac{}{}{0pt}{}{#3}{#4};1\right)}
\newcommand*\pFqx[5]{ {}_{#1}F_{#2}\left(\genfrac{}{}{0pt}{}{#3}{#4};#5\right)}
\newcommand{\gamer}[1]{\Gamma\left(#1\right)}
\newcommand{\half}{\frac{1}{2}}
\newcommand{\A}{\mathbb{A}}
\newcommand{\C}{\mathbb{C}}
\newcommand{\Hb}{\mathbb{H}}
\newcommand{\N}{\mathbb{N}}
\newcommand{\Q}{\mathbb{Q}}
\newcommand{\R}{\mathbb{R}}
\newcommand{\Z}{\mathbb{Z}}
\newcommand{\WW}{\mathcal{W}}
\newcommand{\domega}{\mathrm{d}\omega}
\newcommand{\dtheta}{\mathrm{d}\theta}
\newcommand{\dmu}{\mathrm{d}\mu}
\newcommand{\dx}{\mathrm{d}x}
\newcommand{\dy}{\mathrm{d}y}
\newcommand{\dt}{\mathrm{d}t}
\newcommand{\ds}{\mathrm{d}s}
\newcommand{\dg}{\mathrm{d}g}
\newcommand{\du}{\mathrm{d}u}
\newcommand{\ol}[1]{\overline{#1}}
\newcommand{\slz}{\SL_2(\Z)}
\newcommand{\slr}{\SL_2(\R)}
\newcommand{\slzh}{\slz \backslash \Hb}
\newcommand{\slzr}{\slz \backslash \slr}
\newcommand{\e}{\varepsilon}
\newcommand{\ph}{\varphi}
\newcommand{\tilda}{\widetilde}
\begin{document}

\title[Subconvexity Implies Effective Arithmetic Quantum Unique Ergodicity]{Subconvexity Implies Effective Quantum Unique Ergodicity for Hecke--Maa\texorpdfstring{\ss}{ß} Cusp Forms on $\slzr$}

\author{Ankit Bisain}

\address{Department of Mathematics, Massachusetts Institute of Technology, Cambridge, MA 02139, USA}

\email{\href{mailto:ankitb12@mit.edu}{ankitb12@mit.edu}}

\author{Peter Humphries}

\address{Department of Mathematics, University of Virginia, Charlottesville, VA 22904, USA}

\email{\href{mailto:pclhumphries@gmail.com}{pclhumphries@gmail.com}}

\urladdr{\href{https://sites.google.com/view/peterhumphries/}{https://sites.google.com/view/peterhumphries/}}

\author{Andrei Mandelshtam}

\address{Department of Mathematics, Stanford University, Stanford, CA 94305, USA}

\email{\href{mailto:mailto:andman@stanford.edu}{andman@stanford.edu}}

\author{Noah Walsh}

\address{Department of Mathematics, Massachusetts Institute of Technology, Cambridge, MA 02139, USA}

\email{\href{mailto:nwalsh21@mit.edu}{nwalsh21@mit.edu}}

\author{Xun Wang}

\address{Department of Mathematics, Northwestern University, Evanston, IL, 60208, USA}

\email{\href{mailto:stevenwang2029@u.northwestern.edu}{stevenwang2029@u.northwestern.edu}}

\subjclass[2020]{11F12 (primary); 11F66, 11F67, 58J51, 81Q50 (secondary)}


\begin{abstract}
It is a folklore result in arithmetic quantum chaos that quantum unique ergodicity on the modular surface with an effective rate of convergence follows from subconvex bounds for certain triple product $L$-functions. The physical space manifestation of this result, namely the equidistribution of mass of Hecke--Maa\ss{} cusp forms, was proven to follow from subconvexity by Watson, whereas the phase space manifestation of quantum unique ergodicity has only previously appeared in the literature for Eisenstein series via work of Jakobson. We detail the analogous phase space result for Hecke--Maa\ss{} cusp forms. The proof relies on the Watson--Ichino triple product formula together with a careful analysis of certain archimedean integrals of Whittaker functions.
\end{abstract}

\maketitle

\section{Introduction}

Quantum ergodicity, in its most general sense, originates from the study of quantum chaos. Loosely speaking, quantum ergodicity for a Riemannian manifold is the notion that almost all eigenfunctions of the Laplace--Beltrami operator equidistribute in the large eigenvalue limit. The foundational \emph{quantum ergodicity theorem} due to Shnirelman \cite{Shn74} proves quantum ergodicity for a compact Riemannian manifold with ergodic geodesic flow. In the language of quantum chaos, this can be seen as going from chaotic classical mechanics of a system to equidistribution of energy eigenstates of the system.

We begin with a brief introduction to the general case of quantum ergodicity. We then introduce \emph{arithmetic quantum chaos}, which will be the focus for the remainder of this paper. In the setting of arithmetic quantum chaos, notions such as quantum ergodicity are studied on manifolds with arithmetic structure, giving the eigenfunctions additional structure that is not present in the generic case. For surveys of the generic case of quantum ergodicity, see \cite{Ana10,DeB01,Dya22,Has11,Non13,Zel06,Zel10,Zel19}, while for surveys on arithmetic quantum chaos, see \cite{Mark05,Sar11}.

\subsection{Quantum Ergodicity}
\label{sec:QE}

\subsubsection{Classical Dynamics}

Let $(M,g)$ be a smooth compact oriented $n$-dimensional Riemannian manifold. The cotangent bundle $T^*M$ of the manifold $M$ consists of points $(x,\xi)$ with $x \in M$ and $\xi \in T_x^* M$, the space of tangent covectors at $x$. Associated to each point $x \in M$ and tangent vector $v \in T_x M$ is a unique geodesic $\gamma : \R \to M$ for which $\gamma(0) = x$ and $\gamma'(0) = v$, which gives rise to the geodesic flow $G^t(x,v) \coloneqq (\gamma(t),\gamma'(t))$ on the tangent bundle $T M$ and the corresponding geodesic flow $G_t(x,\xi)$ on the cotangent bundle $T^*M$ by identifying tangent covectors with the corresponding tangent vectors. Equivalently, the geodesic flow $G_t(x,\xi) = (x(t),\xi(t))$ is the Hamiltonian flow, obtained as solutions to the Hamiltonian equations
\[\frac{d}{dt} x_j(t) = \frac{\partial}{\partial \xi_j} H(x(t),\xi(t)), \qquad \frac{d}{dt} \xi_j(t) = -\frac{\partial}{\partial \xi_j} H(x(t),\xi(t)), \qquad j \in \{1,\ldots,n\},\]
of the Hamiltonian
\[H(x,\xi) \coloneqq \sum_{j,k = 1}^{n} g^{jk}(x) \xi_j \xi_k\]
on $T^*M$, where $g_{jk}(x) = g(\frac{\partial}{\partial x_j},\frac{\partial}{\partial x_k})$ denotes the Riemannian metric $g$, while $g^{jk}(x)$ denotes the entries of the inverse matrix. This geodesic flow preserves the cosphere bundle $S^*M$, which consists of points $(x,\xi) \in T^*M$ with $\xi$ of unit length.

One can think of the manifold $M$ as being \emph{physical space} that encodes the position of a point particle on $M$, while the cosphere bundle $S^*M$ is \emph{phase space} and encodes both the position and momentum of a point particle. The geodesic flow on $S^*M$ then encodes the position and momentum of a point particle over time and describes the \emph{classical dynamics} on $M$.

The metric $g$ induces probability measures $\mu$ and $\omega$ on $M$ and $S^*M$ respectively. The latter is called the \emph{Liouville measure} and is invariant under the geodesic flow. The geodesic flow on $S^*M$ is said to be \emph{ergodic} if for almost every $(x,\xi) \in S^*M$, the geodesic flow $G_t(x,\xi)$ equidistributes on $S^*M$ with respect to the Liouville measure. Ergodic geodesic flow demonstrates that the classical dynamics on $M$ are \emph{chaotic}.

\subsubsection{Quantum Dynamics}

These notions for classical dynamics on $M$ have quantum dynamical counterparts. Point particles are replaced by quantum particles with wave functions $\psi : M \times \R \to \C$. In place of $S^*M$, the space of states is instead wave functions $\psi$ that are square-integrable with respect to the volume measure $\mu$ on $M$; with $\psi$ $L^2$-normalized, the probability density $|\psi(x,t)|^2 \, \dmu(x)$ then encodes the probability that a quantum particle is located in a region at a given time $t$.

The classical dynamics $G_t$ are replaced by the quantum dynamics given by the evolution of a wave function over time as governed by Schr\"{o}dinger's equation
\[i \frac{\partial \psi}{\partial t} = -\Delta \psi.\]
Here
\[\Delta \coloneqq -\frac{1}{\sqrt{|\det g|}} \sum_{j,k = 1}^{n} \frac{\partial}{\partial x_j} g^{jk}(x) \sqrt{|\det g|} \frac{\partial}{\partial x_k}\]
denotes the Laplace--Beltrami operator on $M$, which we refer to as the Laplacian for the sake of brevity. This is a second order scalar linear partial differential operator, defined from the Riemannian metric $g$, that commutes with isometries of $M$; moreover, it is, up to scalar multiplication, the unique nontrivial such scalar linear partial differential operator of minimal order. The quantization of the Hamiltonian $H$ is the Laplacian $\Delta$, while the quantization of the geodesic flow $G_t$ acting on $S^*M$ is the quantum flow acting on $L^2(M)$ given by the Schr\"{o}dinger propagator $U^t \coloneqq e^{-it\Delta}$.

\emph{Stationary states} are $L^2$-normalized wave functions $\psi(x,t)$ that are separable, so that $\psi(x,t) = \varphi(x) \nu(t)$ for some functions $\varphi : M \to \C$ and $\nu : \R \to \C$, and are such that the probability densities $|\psi(x,t)|^2 \, \dmu(x)$ are independent of time, so that $\varphi \in L^2(M)$ and $\nu(t) = e^{-i\lambda t}$ for some $\lambda \in \R$. The functions $\varphi$ correspond to solutions of the eigenvalue problem
\[\Delta \varphi = \lambda \varphi;\]
that is, they are Laplacian eigenfunctions. For each eigenvalue, the corresponding eigenspace of Laplacian eigenfunctions is finite-dimensional, so that one can choose an orthonormal basis of each eigenspace. The union of these orthonormal bases then forms an orthonormal basis of the whole space $L^2(M)$, and the corresponding collection of eigenvalues forms a countable discrete set of nonnegative real numbers. We denote the set of Laplacian eigenfunctions by $(\ph_j)_{j \ge 1}$ and the corresponding Laplacian eigenvalues by $(\lambda_j)_{j \ge 1}$.

Associated to each Laplacian eigenfunction $\ph_j$ is its \emph{microlocal lift} $\omega_j$, alternatively known as a \emph{Wigner distribution}. The microlocal lift is a distribution on $S^*M$ of the measure corresponding to $\ph_j$ on the cosphere bundle $S^*M$, as defined in \cite[(2)]{Dya22}; it should be thought of as measuring the average value in phase space $S^*M$ of an observable $a \in C^{\infty}(S^* M)$ for a quantum particle with wave function $\varphi_j$. When acting on observables $a \in C^{\infty}(S^*M)$ that descend to functions on $M$, so that $a(x,\xi)$ is constant in $\xi$ the microlocal lift is simply the distribution $a(x,\xi) |\varphi_j(x)|^2 \, \dmu(x)$ on $M$.

\subsubsection{Quantum Ergodicity}

In \cite[Addendum]{Laz93}, Shnirelman proved that if the geodesic flow on $S^*M$ is ergodic with respect to the Liouville measure $\omega$, there exists a subsequence $(\ph_{j_k})_{k \ge 1}$ of the sequence of Laplacian eigenfunctions $(\ph_j)_{j \ge 1}$ of density $1$ (in the sense that $\#\{\lambda_{j_k} \leq \lambda\}/\#\{\lambda_k \leq \lambda\} \to 1$ as $\lambda \to \infty$) such that for all smooth functions $a$ on $M$,
\[\lim_{k \to \infty} \int_M a(x) \abs{\ph_{j_k}(x)}^2 \, \dmu(x) = \int_M a(x) \, \dmu(x).\]
That is, a density $1$ subsequence of the eigenfunctions \emph{equidistributes in physical space}. Shnirelman in fact proved the stronger statement that a density $1$ subsequence $(\omega_{j_k})_{k \geq 1}$ of the sequence of microlocal lifts \emph{equidistributes in phase space} in the sense that it approaches the Liouville measure on $S^*M$. That is, for any smooth function $a$ on $S^*M$,
\[\lim_{k \to \infty} \int_{S^*M} a(x,\xi) \,\domega_{j_k}(x,\xi) = \int_{S^*M} a(x,\xi) \,\domega(x,\xi).\]
This property is known as \emph{quantum ergodicity}. An outline of a proof of the quantum ergodicity theorem similar to Shnirelman's original proof can be found in \cite[Section 2]{Dya22}. Shnirelman's proof was first announced in \cite{Shn74} and independent proofs were obtained by Zelditch \cite{Zel87} and Colin de Verdi\`{e}re \cite{Col85}. Quantum ergodicity demonstrates that the quantum dynamics on $M$ are \emph{chaotic}. The fact that quantum ergodicity follows from the assumption that the geodesic flow on $S^*M$ is ergodic can be viewed as showing that chaotic classical dynamics imply chaotic quantum dynamics.

Quantum \emph{unique} ergodicity (QUE) in physical space is the property that $(\ph_j)_{j \ge 1}$ satisfies 
\[\lim_{j \to \infty} \int_M a(x) \abs{\ph_{j}(x)}^2 \,\dmu(x) = \int_M a(x)\, \dmu(x)\]
for all smooth functions $a$ on $M$. Equivalently, QUE in physical space is the property that the whole sequence of eigenfunctions equidistributes in physical space $M$. The notion of QUE has a natural generalization to phase space $S^{\ast} M$: quantum unique ergodicity in phase space refers to the property of $(\ph_j)_{j \geq 1}$ satisfying
\begin{equation}
\lim_{j \to \infty} \int_{S^*M} a(x,\xi) \,\domega_{j}(x,\xi) = \int_{S^*M} a(x,\xi) \,\domega(x,\xi)
\end{equation}
for all smooth functions $a$ on $S^{\ast} M$. Henceforth, QUE will refer to quantum unique ergodicity on phase space unless otherwise noted.

It was established by Hassell in \cite[Theorem 1]{Has10} that there exist compact Riemannian manifolds for which the geodesic flow is ergodic and yet not all eigenfunctions equidistribute. Namely, Hassell showed that QUE does not hold for a large family of stadium domains.\footnote{For manifolds with boundary, the geodesic flow is replaced by the \emph{billiard flow}, where trajectories bounce off of the boundary.} However, in many cases, it is still believed that QUE should hold. In particular, it was conjectured by Rudnick and Sarnak \cite[Conjecture]{RS94} that QUE holds when $(M,g)$ is a compact hyperbolic surface or more generally a negatively curved compact manifold.

\subsection{Quantum Unique Ergodicity for Arithmetic Surfaces}

For most hyperbolic surfaces, QUE is far from proven. However, this conjecture is better understood in the case where $(M,g)$ is an arithmetic hyperbolic surface.

Let $\Hb \coloneqq \{z = x+iy \in \C : y \in \R_+\}$ denote the upper half-plane with area measure $\dmu(z) \coloneqq \frac{\dx \, \dy}{y^2}$ and Laplacian $\Delta \coloneqq -y^2(\frac{\partial^2}{\partial x^2} + \frac{\partial^2}{\partial y^2})$ coming from the standard hyperbolic metric $\ds^2 \coloneqq \frac{\dx^2+\dy^2}{y^2}$. Recall that $\slr$ acts transitively on $\Hb$ via M\"{o}bius transformations, namely
\[g \cdot z \coloneqq \frac{az + b}{cz + d} \quad \text{for $g = \begin{pmatrix}a & b \\ c & d \end{pmatrix} \in \SL_2(\R)$ and $z \in \Hb$.}\]
If $\Gamma \subset \slr$ is an arithmetic subgroup (in the sense of \cite[Chapter 5]{Kat92}), such as a congruence subgroup of $\slz$, the quotient space $\Gamma\backslash \Hb$ is an \emph{arithmetic hyperbolic surface}. These surfaces are not necessarily compact, but have finite area, allowing the necessary notions to be defined. In particular, $\Gamma \backslash \Hb$ has finite area (with respect to $\dmu$) given by $\frac{\pi}{3}[\slz : \Gamma]$ when $\Gamma$ is a finite-index subgroup of the modular group $\slz$. Note that in general, the area measure $d\mu(z)$ is \emph{not} a probability measure on $\Gamma \backslash \Hb$, which differs from the normalization of the probability measure $\mu$ on $M$ in Section \ref{sec:QE}.

The study of QUE on arithmetic surfaces is aided via the presence of \emph{Hecke operators} (see \eqref{eqn:Heckeoperator} below). The Hecke operators on a given arithmetic hyperbolic surface are a sequence $T_1,T_2,\ldots$ of self-adjoint operators on the space of square-integrable functions on the surface; they are a nonarchimedean analogue of the Laplace--Beltrami operator. It is known that Hecke operators commute with each other and with the hyperbolic Laplacian $\Delta$. We may therefore simultaneously diagonalize the space of Maa{\ss} cusp forms (nonconstant Laplacian eigenfunctions occurring in the discrete spectrum of the Laplacian) with respect to the Hecke operators, obtaining a basis of \emph{Hecke--Maa{\ss} cusp forms}, which are simultaneous eigenfunctions of both the Laplacian and of all the Hecke operators. Due to the additional structure given from the Hecke operators, stronger results regarding QUE are known for such Hecke eigenbases.

Henceforth, we focus on the case where $\Gamma = \slz$ and $M=\slzh$ is the modular surface. This surface is not compact, as it has a cusp at $i\infty$. Its cosphere bundle $S^{\ast} M$ may be identified with the quotient space $\slzr$, while the microlocal lift $\omega_j$ of a Laplacian eigenfunction can be explicitly expressed in terms of linear combinations of raised and lowered Laplacian eigenfunctions, as we explicate further in Section \ref{sec:functions}. Its Laplacian eigenfunctions can be split into two classes. There is a discrete spectrum, which, besides constant functions, arises from nonconstant Laplacian eigenfunctions $\ph_j$ called \emph{Maa{\ss} cusp forms} corresponding to a nondecreasing sequence of positive eigenvalues $\lambda_j$; the \emph{cuspidality condition} is precisely the condition that
\[\int_{0}^{1} \varphi_j(x + iy) \, \dx = 0\]
for all $y \in \R_+$. Because $M$ is noncompact, there is also a continuous spectrum, with eigenfunctions coming from real-analytic Eisenstein series $E\left(z,\frac{1}{2}+it\right)$ with eigenvalues $\frac{1}{4}+t^2$. We discuss Hecke--Maa\ss{} cusp forms and Eisenstein series in further detail in Section \ref{sec:functions} (cf.\ \cite[Section 4]{DFI02}).

It is a seminal result of Lindenstrauss \cite[Theorem 1.4]{Lin06} that on a (possibly noncompact) arithmetic hyperbolic surface, for a Hecke eigenbasis, any limit (in the weak-* topology) of a subsequence of the measures $\omega_j$ is a nonnegative multiple of the Liouville measure $\omega$. When the surface is compact, this limit must be the Liouville measure itself, proving QUE for compact arithmetic hyperbolic surfaces. See \cite[Section 3]{Sar11} for more discussion of the relevant work and progress in the arithmetic case.

On the (noncompact) modular surface, equidistribution for the continuous spectrum was established in physical space by Luo and Sarnak \cite[Theorem \ref{thm:mainthm}]{LS95}, and later in phase space by Jakobson \cite[Theorem 1]{Jak94}. Since the modular surface is noncompact, the work of Lindenstrauss does not establish QUE for this surface, as there is possibility of mass escaping to the cusp. This possibility was eliminated by Soundararajan \cite{Sou10}, establishing QUE for Hecke--Maa{\ss} cusp forms on the modular surface. However, this resolution of QUE for Hecke--Maa{\ss} cusp forms leaves unresolved the problem of determining the \emph{rate} of equidistribution.

In \cite[Theorem 2]{Jak97}, Jakobson proves that the measures $\omega_j$ converge to $\omega$ in an averaged sense with an effective rate of averaged equidistribution. Precisely, Jakobson proves that if $a$ is an element of the space $C_{c,K}^{\infty}(S^{\ast} M)$ consisting of finite linear combinations of smooth compactly supported functions of even weight (as described in \eqref{eqn:weighttransformation} below), then
\begin{equation}
\label{eqn:Jakobsontheorem1}
\sum_{\lambda_j \leq \lambda}\left|\int_{S^*M} a(z, \theta) \, \domega_j(z, \theta) - \int_{S^*M} a(z, \theta) \,\domega(z, \theta)\right|^2 \ll_{a,\e} \lambda^{\frac{1}{2} + \e}.
\end{equation}
As Weyl's law implies that the number of eigenvalues below $\lambda$ is asymptotic to $\frac{\lambda}{12}$ \cite[Theorem 2]{Ris04}, this gives an averaged bound of $\lambda^{-1/2+\e}$ on each summand. This bound generalized an earlier result of Luo and Sarnak \cite[Theorem 1.2]{LS95}, which essentially gave the analogous average bound in physical space. Luo and Sarnak also remark that the best possible individual bound for each summand in \eqref{eqn:Jakobsontheorem1} is of size $\lambda_j^{-1/2}$. To see why this is true, we recall that it was established by Sarnak and Zhao \cite[Theorem 1.1]{SZ19} that 
\[\sum_{\lambda_j \leq \lambda} \left|\int_{S^*M} a(z, \theta) \, \domega_j(z, \theta) - \int_{S^*M} a(z, \theta) \,\domega(z, \theta)\right|^2 \sim Q(a,a) \lambda^{\frac{1}{2}},\]
where $Q(a,b)$ is a fixed sesquilinear form on $C_{c,K}^\infty(\slzr) \times C_{c,K}^\infty(\slzr)$. It follows that if
\[\max_{\lambda_j \leq \lambda} \left|\int_{S^*M} a(z, \theta) \, \domega_j(z, \theta) - \int_{S^*M} a(z, \theta) \,\domega(z, \theta)\right| \leq C\]
for some nonnegative constant $C$, then
\[\sum_{\lambda_j \leq \lambda}\left|\int_{S^*M} a(z, \theta) \, \domega_j(z, \theta) - \int_{S^*M} a(z, \theta) \,\domega(z, \theta)\right|^2 \leq C^2 \left(\frac{\lambda}{12}+o(\lambda)\right),\]
which are contradictory statements unless $C \gg \lambda^{-1/4}$.

\subsection{Results}

Our goal is to prove bounds for the \emph{individual} terms
\[\int_{S^*M} a(z, \theta) \, \domega_j(z, \theta) - \int_{S^*M} a(z, \theta) \,\domega(z, \theta).\]
These bounds are contingent on bounds for certain $L$-functions. In \cite[Theorem 3]{Wat08}, Watson establishes the following formula for integrals of products of Hecke--Maa{\ss} cusp forms $\varphi_j$, whose precise definitions we give in Section \ref{sec:functions}: there exists a nonnegative absolute constant $C$ such that
\[\left| \int_{M} \ph_{j_1}(z)\ph_{j_2}(z) \ph_{j_3}(z) \, \dmu(z) \right|^2 = C \frac{\Lambda\left(\frac{1}{2}, \ph_{j_1} \otimes \ph_{j_2} \otimes \ph_{j_3}\right)}{\Lambda(1,\ad\ph_{j_1})\Lambda(1,\ad\ph_{j_2})\Lambda(1,\ad\ph_{j_3})}.\]
Here the terms on the right-hand side are completed $L$-functions whose definitions are given in Section \ref{sec:Lfunctions}. The Lindel\"{o}f hypothesis for such $L$-functions (itself a consequence of the generalized Riemann hypothesis) would then imply sufficiently strong upper bounds in order to prove the uniform version of Luo and Sarnak's physical space result \cite[Theorem 1.2]{LS95}. In particular, for any $a \in C^\infty_c(M)$, we would have that
\[\int_M a(z) \abs{\ph_j(z)}^2 \, \dmu(z) - \int_M a(z) \, \dmu(z) \ll_{a,\e} \lambda_j^{-\frac{1}{4} + \e}\]
under the assumption of the conjectural bound $L\left(\half,\ph_{j_1} \otimes \ph_{j_1} \otimes \ph_{j_3}\right) \ll_{\ph_{j_3},\e} \lambda_{j_1}^{\e}$ (cf.\ \cite[Corollary 1]{Wat08} and \cite[Proposition 1.5]{You16}). More generally, any effective subconvex bound of the form $L\left(\half,\ph_{j_1} \otimes \ph_{j_1} \otimes \ph_{j_3}\right) \ll_{\ph_{j_3}} \lambda_{j_1}^{1/2 - 2\delta}$ would provide the above statement with weaker error term of the form $O_{a}(\lambda_j^{-\delta} \log \lambda_j)$. In this paper, we prove the strengthening of this physical space statement to phase space.

\begin{theorem}
\label{thm:mainthm}
Suppose that there exist constants $\delta > 0$ and $A > 0$ such that for any Hecke--Maa\ss{} cusp forms $\varphi_1,\varphi_2$ with Laplacian eigenvalues $\lambda_1,\lambda_2$, any $t \in \R$, and any holomorphic Hecke cusp form $F$, we have the subconvex bounds
\begin{align}
\label{eqn:subconvex1}
L\left(\frac{1}{2},\ad \varphi_1 \otimes \varphi_2\right) & \ll \lambda_1^{\frac{1}{2} - 2\delta} \lambda_2^A,	\\
\label{eqn:subconvex2}
L\left(\frac{1}{2} + it,\ad \varphi_1\right) & \ll \lambda_1^{\frac{1}{4} - \delta} (1 + |t|)^A,	\\
\label{eqn:subconvex3}
L\left(\frac{1}{2},\ad \varphi_1 \otimes F\right) & \ll_F \lambda_1^{\frac{1}{2} - 2\delta}.
\end{align}
Then for any $a \in C_{c,K}^\infty(S^*M)$, we have that
\begin{equation}
\label{eqn:mainthm}
\int_{S^*M} a(z,\theta) \, \domega_j(z,\theta)-\int_{S^*M} a(z,\theta)\, \domega(z,\theta) \ll_a \lambda_j^{-\delta} \log \lambda_j.
\end{equation}
In particular, assuming the generalized Lindel\"{o}f hypothesis, we have that
\[\int_{S^*M} a(z,\theta) \, \domega_j(z,\theta)-\int_{S^*M} a(z,\theta)\, \domega(z,\theta) \ll_{a,\e}\lambda_j^{-\frac{1}{4} + \e}.\]
\end{theorem}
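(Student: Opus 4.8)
The plan is to reduce the phase-space equidistribution statement to a question about individual triple-product integrals, which can then be controlled via the Watson--Ichino formula and the hypothesized subconvex bounds. First I would use the explicit description of the microlocal lift $\omega_j$ promised in Section~\ref{sec:functions}: since $\omega_j$ is built from linear combinations of raised and lowered Laplacian eigenfunctions, and since the test function $a \in C_{c,K}^\infty(S^*M)$ is a finite linear combination of functions of fixed even weight, it suffices to test against a single weight-$2k$ vector. By the spectral decomposition of $L^2(\slzr)$ into its $K$-isotypic components, I would expand such a test vector in a basis consisting of weight-$2k$ raisings of Hecke--Maa\ss{} cusp forms, a constant term, weight-$2k$ vectors in the Eisenstein (continuous) spectrum, and holomorphic weight-$2k$ cusp forms. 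The main integral $\int_{S^*M} a \, \domega_j$ then decomposes as a sum of inner products of $\omega_j$ (itself essentially $|\varphi_j|^2$ raised/lowered) against these basis elements.

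The key point is that each such inner product is, after unfolding the microlocal lift, a triple product period of the shape $\langle \Lambda_\pm^k \varphi_j \cdot \Lambda_\mp^k \overline{\varphi_j}, \psi\rangle$ where $\psi$ ranges over the spectral basis. The constant-function term reproduces precisely $\int_{S^*M} a \, \domega$, the main term; everything else must be shown to be $O_a(\lambda_j^{-\delta}\log\lambda_j)$. I would invoke the Watson--Ichino triple product formula to express the square of each off-diagonal period in terms of a central $L$-value divided by adjoint $L$-functions. The three cases of the spectral decomposition correspond exactly to the three hypothesized subconvex bounds: pairing against another cusp form $\varphi_2$ gives the $L$-value $L(\tfrac12, \ad\varphi_1 \otimes \varphi_2)$ governed by~\eqref{eqn:subconvex1}; pairing against Eisenstein series $E(\cdot, \tfrac12+it)$ produces $L(\tfrac12+it, \ad\varphi_1)$ governed by~\eqref{eqn:subconvex2}; and pairing against a holomorphic Hecke cusp form $F$ produces $L(\tfrac12, \ad\varphi_1 \otimes F)$ governed by~\eqref{eqn:subconvex3}. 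Summing the contributions over the spectral basis, the $\log\lambda_j$ in the error term arises from the length of the spectral sum (or the $t$-integral in the continuous part) truncated at the point where the archimedean decay of the triple-product integral overtakes the polynomial growth.

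The hard part, and the genuinely new analytic content beyond Watson's physical-space result, is the precise analysis of the archimedean local factors in the Watson--Ichino formula. In physical space one tests against functions descending to $M$, so the relevant archimedean integral involves spherical Whittaker functions; in phase space the raising and lowering operators inject nonspherical (weight-$2k$) vectors, and one must control integrals of products of Whittaker functions of differing weights. I would compute or bound these archimedean integrals explicitly --- presumably reducing them to ratios of Gamma functions or hypergeometric expressions via the Barnes-type integral representations alluded to by the \verb|\pFq| macros --- and, crucially, track their dependence on both the spectral parameter of $\varphi_j$ and the weight $k$, since uniform control in these parameters is what converts the subconvex bounds into the stated decay rate $\lambda_j^{-\delta}$. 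The main obstacle is ensuring that this archimedean factor does not degrade the saving coming from subconvexity: one needs it to be bounded (or mildly growing) rather than to contribute extra powers of $\lambda_j$, which requires a careful stationary-phase or asymptotic analysis of the Whittaker integrals in the large-eigenvalue limit.
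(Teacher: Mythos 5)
Your proposal follows essentially the same route as the paper: spectrally decompose $a$ over the weight-$2k$ isotypic basis (constants, shifted Maa\ss{} forms, shifted holomorphic forms, Eisenstein series), identify the constant term with $\int a\,\domega$, express each cuspidal off-diagonal period via the Watson--Ichino formula as a central $L$-value times an archimedean Whittaker integral, treat the Eisenstein contribution by unfolding, and then combine the hypothesized subconvex bounds with explicit hypergeometric/Stirling estimates for the archimedean factors. Two points in your account are off, though neither breaks the argument. First, the factor $\log\lambda_j$ in \eqref{eqn:mainthm} does not come from the length of the spectral sum or a truncation of the $t$-integral; it comes from the lower bound $L(1,\ad\varphi_j)\gg 1/\log r_j$ in \eqref{eqn:L1adboundsMaass}, since the Watson--Ichino and unfolding formulas place $L(1,\ad\varphi_j)$ (or its square) in the denominator. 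The convergence of the sum over the spectral basis is instead handled by integrating by parts against powers of the Casimir operator (converting $\langle a,\Phi_{\ell,k}\rangle$ into $(\frac14+r_\ell^2)^{-A}\langle\Omega^A a,\Phi_{\ell,k}\rangle$) together with Weyl's law and Cauchy--Schwarz, which contributes only a Sobolev norm of $a$. Second, uniform control of the archimedean integrals in the weight $k$ is not needed and in fact is not achieved in the paper: since $a$ is $K$-finite, only finitely many weights $|k|\le M$ occur, and the paper's Whittaker bounds are allowed to degrade (even exponentially) in $k$; the paper explicitly remarks that removing the $K$-finiteness hypothesis would require exactly the $k$-uniform bounds you describe, which remain open.
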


\begin{remark}
The method of proof yields explicit dependence on $a$ in these error terms in terms of Sobolev norms of $a$; see \eqref{eqn:Sobolev}.
\end{remark}

The subconvex bounds \eqref{eqn:subconvex1}, \eqref{eqn:subconvex2}, and \eqref{eqn:subconvex3} are \emph{hypotheses} in Theorem \ref{thm:mainthm}: it is not yet known that these subconvex bounds hold. While subconvex bounds are known for certain other families of $L$-functions (see, for example, \cite{MV10,Nel21}), unconditional proofs of the desired subconvex bounds \eqref{eqn:subconvex1}, \eqref{eqn:subconvex2}, and \eqref{eqn:subconvex3} currently remain elusive. A chief obstacle towards proving these bounds is the so-called \emph{conductor dropping phenomenon}, as discussed in \cite{KY23}.

Theorem \ref{thm:mainthm} is folklore (see, for example, \cite[p.\ 1156]{SZ19}), though no detailed proof exists in the literature. The method of proof is known to experts; the analogue of QUE for Bianchi manifolds (i.e.\ arithmetic quotients of $\Hb^3 = \SL_2(\C)/\mathrm{SU}(2)$), for example, has been shown by Marshall to follow from subconvexity for triple product $L$-functions \cite[Theorem 3]{Mars14}, and the proof that we give for the modular surface is by the same general strategy. To explicate all the details, one needs the full strength of the Watson--Ichino triple product formula as in \cite[Theorem 3]{Wat08} and \cite[Theorem 1.1]{Ich08}. Coupling this with a lemma of Michel and Venkatesh \cite[Lemma 3.4.2]{MV10} (cf.\ \cite[Lemma 5]{SZ19}), we show that certain triple products of automorphic forms on $\slzr$ can be expressed in terms of a product of central values of $L$-functions and certain archimedean integrals of Whittaker functions; the latter can in turn be related to gamma functions and hypergeometric functions.

Finally, we take this opportunity to observe that Jakobson's treatment of QUE for Eisenstein series in \cite{Jak94} is incomplete; in particular, the case where the test function is a shifted holomorphic or antiholomorphic Hecke cusp form is missing. We supply the omitted computations in Section \ref{sec:fixing}.

\subsection{Friedrichs Symmetrization}

We end the discussion of our results by explaining how our results are valid not only for the Wigner distribution $\omega_j$, which need not be a positive distribution, but also for the \emph{Friedrichs symmetrization} $\omega_j^F$ defined in \eqref{eqn:Friedrichs} below, which \emph{is} a positive distribution. The microlocal lifts $\omega_j$ of Hecke--Maa\ss{} cusp forms on the modular surface that we work with in this paper are the Wigner distributions given by
\[\domega_j(z,\theta) \coloneqq \ph_j(z) \ol{u_j(z, \theta)} \, \domega(z,\theta), \qquad u_j(z,\theta) \coloneqq\frac{3}{\pi} \sum_{k = -\infty}^{\infty}\varphi_{j, k}(z)e^{2ki\theta},\]
as defined in \cite[(1.18)]{Zel91}. Here the convergence is in distribution and $\domega$ is the (unnormalized) Liouville measure, given by $\frac{\dx \, \dy \, \dtheta}{2\pi y^2}$ on $\slzr = S^*M$, where we identify $g \in \slr$ with $(x,y,\theta) \in \R \times \R_+ \times [0,2\pi)$ via the Iwasawa decomposition (cf.\ \eqref{eqn:iwasawa} below). The functions $\ph_{j,k}$ are the $L^2$-normalized shifted Hecke--Maa{\ss} forms of weight $2k$ obtained from $\ph_j$ by raising or lowering operators, as defined in Section \ref{sec:functions}; for their Fourier expansions, see Section \ref{sec:fourier}. 

We recall that a positive distribution $T$ on a normed space $V$ over $\C$ is a bounded linear functional $T: V \to \C$ such that $T(v) \geq 0$ for all $v \in V$. In general, the Wigner distribution $\domega_j$ need not be a positive distribution on $C_c^\infty(\slzr)$. To convert $\domega_j$ into a positive distribution, we define for $a \in C_c^\infty(\slzr)$ the pairing
\[\left\langle a , \domega_j \right\rangle = \int_{S^*M} a(z,\theta) \, \domega_j(z,\theta) \coloneqq \lim_{K \to \infty} \int_{\slzr} a(z,\theta) \varphi_j(z) \sum_{k = -K}^K \ol{\varphi_{j,k}(z) e^{2ki\theta}} \, \domega(z,\theta).\]
We now define a new distribution $\domega_j^F$, the \emph{Friedrichs symmetrization} of $\domega_j$, via
\begin{equation}
\label{eqn:Friedrichs}
\left\langle a, \domega_j^F \right\rangle \coloneqq \left\langle a^F, \domega_j \right\rangle,
\end{equation}
where the function $a^F \in C_c^{\infty}(\slzr)$ is the Friedrichs symmetrization of $a$; for its explicit construction, see \cite[Proposition 2.3]{Zel87}. In particular, it was established in \cite[Proposition 2.3]{Zel87} that $\domega_j^F $ is a positive distribution\footnote{Lindenstrauss \cite[Corollary 3.2]{Lin01} constructs an alternate positive distribution that has a similar effect: for each $N \in \N$, Lindenstrauss defines the positive distribution $\domega_j^N(z,\theta) \coloneqq \frac{3}{\pi} \frac{1}{2N + 1} \left| \sum_{k = -N}^{N} \varphi_{j,k}(z) e^{2ki\theta}\right|^2 \, \domega(z,\theta)$. For $N \sim \lambda_j^{1/4}$, this satisfies $\langle a, \domega_j^N \rangle - \langle a, \domega_j \rangle \ll_{a,\e} \lambda_j^{-1/4+\varepsilon}$.}, while it was established in \cite[Proposition 3.8]{Zel91} that
\[\left\langle a, \domega_j^F \right\rangle - \left\langle a, \domega_j \right\rangle \ll_{a,\e} \lambda_j^{-\frac{1}{2}+\varepsilon}.\]
Combined with Theorem \ref{thm:mainthm}, we see that in specific scenarios where one needs to deal with positive distributions, it suffices to work with the Wigner distribution $\domega_j$.

\section*{Acknowledgements}

The authors participated in, and conducted this research through, the 2023 UVA REU in Number Theory. They are grateful for the support of grants from Jane Street Capital, the National Science Foundation (DMS-2002265 and DMS-2147273), the National Security Agency (H98230-23-1-0016), and the Templeton World Charity Foundation. The second author was additionally supported by the National Science Foundation (grant DMS-2302079) and by the Simons Foundation (award 965056).

The authors would like to thank Maximiliano Sanchez Garza for his guidance, Asaf Katz and Jesse Thorner for useful feedback, and the anonymous referees for their thorough readings of this paper. Finally, they are grateful to Ken Ono for organizing the 2023 UVA REU.

\section{Proof Outline}

On a broad scale, our proof strategy follows the proof of equidistribution of Eisenstein series in phase space from \cite{Jak94}, which we now outline. We will also make reference to a few objects that we have not yet defined; namely, we use $(x,y,\theta)$ coordinates on $S^*M$ given by \eqref{eqn:iwasawa}, $L$-functions that we explain in Section \ref{sec:Lfunctions}, and various types of functions on $M$ all defined in Section \ref{sec:functions}.

In our paper, we extend the probability measure $|\varphi_j|^2\, \dmu$ to its microlocal lift $\domega_j$ on $S^*M$ for a Hecke--Maa{\ss} cusp form $\varphi_j$ with Laplacian eigenvalue $\lambda_j$. The work of Jakobson \cite{Jak94} solves a similar problem: Jakobson proves the analogous result for the extension of the Radon measure $|E(\cdot, \frac{1}{2}+it)|^2\, \dmu$ to its microlocal lift $\dmu_t$. Jakobson's method for bounding integrals of the form $\int a\, \dmu_t$ is to consider only functions $a$ appearing in an orthonormal basis of $L^2(S^*M)$. Namely, Jakobson computes the integral for constant functions, shifted Hecke--Maa{\ss} cusp forms, shifted holomorphic or antiholomorphic Hecke cusp forms\footnote{As mentioned previously, Jakobson only treats unshifted holomorphic Hecke cusp forms and neglects to deal with the more general case of shifted holomorphic or antiholomorphic Hecke cusp forms. We complete Jakobson's proof by dealing with this untreated general case in Section \ref{sec:fixing}.}, and weighted Eisenstein series. He then bounds $\int a\, \dmu_t$ for general smooth, compactly supported $a$ on $S^*M$ by approximating them using this basis.

To bound $\int a\, \dmu_t$, Jakobson uses the coordinates $(x, y, \theta)$ on $S^*M$ and proceeds to integrate over $\theta$, which reduces the problem to computing integrals over $M$. These integrals can readily be evaluated using the key fact that they involve Eisenstein series. An Eisenstein series can be written by a sum over $\Gamma_\infty\backslash\slz$, where $\Gamma_\infty \subset \slz$ is the stabilizer of the cusp $i\infty$, in such a way that the integral can be unfolded to one over the fundamental domain $\{x + iy \in \Hb : x \in [0,1]\}$ for $\Gamma_{\infty}\backslash\Hb$. Jakobson then inserts the Fourier--Whittaker expansion of each function in the integrand and subsequently directly evaluates the integral over $x \in [0,1]$. One is left with an expression involving central values of $L$-functions related to the test functions and an integral over $y \in \R_+$ of Whittaker functions. This remaining integral can be expressed in terms of hypergeometric functions and subsequently bounded using Stirling's formula.

Our paper follows a similar reduction of integrals, using the same orthonormal basis. In particular, we must show that the constant term contributes the main term in Theorem \ref{thm:mainthm}, while the contribution from integrating against shifted Hecke--Maa{\ss} cusp forms, shifted holomorphic or antiholomorphic Hecke cusp forms, and shifted Eisenstein series are $O(\lambda_j^{-\delta} \log \lambda_j)$ as $j \to \infty$. We now outline how we evaluate each type of integral.
\begin{itemize}
\item The constant case is trivial, and contributes to the main term in Theorem \ref{thm:mainthm}.
\item The weighted Eisenstein series case can be computed with an unfolding technique analogous to the previously discussed computations in \cite{Jak94}. Computing this integral gives a product of a central value of an $L$-function and an expression involving gamma functions and hypergeometric functions.
\item For the remaining two cases, namely shifted Hecke--Maa{\ss} cusp forms and shifted holomorphic or antiholomorphic Hecke cusp forms, the unfolding trick does not apply to the integrals of interest since they do not involve an Eisenstein series. Instead, we use the \emph{Watson--Ichino triple product formula} \cite{Ich08,Wat08}. This formula allows us to write the square of the absolute value of the integral as a product of a central value of an $L$-function and the square of the absolute value of an integral of Whittaker functions. The latter integral can again be explicitly computed to obtain an expression in terms of hypergeometric functions.
\end{itemize}
We then bound all hypergeometric functions using Stirling's formula, while we invoke our assumption of subconvexity to bound central values of $L$-functions, which yields Theorem \ref{thm:mainthm}.

\section{Preliminaries}

\subsection{Differential Operators on \texorpdfstring{$\slzr$}{SL\texttwoinferior(Z)\textbackslash SL\texttwoinferior(R)} and \texorpdfstring{$\slzh$}{SL\texttwoinferior(Z)\textbackslash H}}

We begin by describing differential operators acting on $\slzr$ and $\slzr$. Useful references for these include \cite[Chapter 2]{Bum97}, \cite[Chapter 1]{Iwa02}, \cite{Lan85}, and \cite{Roe66}.

In coordinates $z = x + iy \in \Hb$, the Laplacian on $\slzh$ is given by $\Delta \coloneqq -y^2(\frac{\partial^2}{\partial x^2} + \frac{\partial^2}{\partial y^2})$, and the area measure is given by $\dmu(z) \coloneqq \frac{\dx\, \dy}{y^2}$, giving this space volume $\frac{\pi}{3}$. The unnormalized Liouville measure on the unit cotangent bundle $S^*M = \slzr$ is given by $\domega(z,\theta) \coloneqq \frac{\dmu(z)\, \dtheta}{2\pi}$, which also gives this space volume $\frac{\pi}{3}$. Here we identify points on $S^*M$ with points on $\slzr$ using the Iwasawa decomposition
\begin{equation}
\label{eqn:iwasawa}
g = 
\begin{pmatrix}
1 & x\\
0 & 1
\end{pmatrix}
\begin{pmatrix}
y^{1/2} & 0\\
0 & y^{-1/2}
\end{pmatrix}
\begin{pmatrix}
\cos\theta & \sin\theta\\
-\sin\theta & \cos\theta
\end{pmatrix}
\end{equation}
for elements $g \in \slr$, where $x \in \R$, $y \in \R_+$, and $\theta \in [0,2\pi)$.

A function $\Phi : \slzr \to \C$ is of weight $2k$ for some $k \in \Z$ if it satisfies
\begin{equation}
\label{eqn:weighttransformation}
\Phi(z,\theta + \theta') = e^{2ki\theta'} \Phi(z,\theta)
\end{equation}
for all $z \in \Hb$ and $\theta,\theta' \in \R$. We have an inner product on $\slzr$ defined by
\[\left\langle \Phi_1, \Phi_2 \right\rangle \coloneqq \int_{\slzr} \Phi_1(z,\theta) \ol{\Phi_2(z,\theta)} \, \domega(z,\theta)\]
for $\Phi_1,\Phi_2 \in L^2(\slzr)$.

Similarly, a function $f : \Hb \to \C$ satisfying the automorphy condition
\begin{equation}
\label{eqn:automorphy}
f\left(\frac{az + b}{cz + d}\right) = \left(\frac{cz + d}{|cz + d|}\right)^{2k} f(z)
\end{equation}
for all $\begin{psmallmatrix} a & b \\ c & d \end{psmallmatrix} \in \slz$ is said to be of weight $2k$. A weight $2k$ function $f : \Hb \to \C$ lifts to a weight $2k$ function $\Phi : \slzr \to \C$ via the map $f \mapsto \Phi$ given by $\Phi(z,\theta) \coloneqq f(z) e^{2ki\theta}$. Equivalently, we define for $g = \begin{psmallmatrix} a & b \\ c & d \end{psmallmatrix} \in \SL_2(\R)$ and $z \in \Hb$ the $j$-factor
\[j_g(z) \coloneqq \frac{cz + d}{|cz + d|},\]
so that for $g \in \SL_2(\R)$ given by \eqref{eqn:iwasawa}, we have that $\Phi(z,\theta) \coloneqq j_g(i)^{-k} f(g \cdot i)$. We have an inner product on weight $2k$ functions $f_1,f_2 : \Hb \to \C$ defined by
\[\left\langle f_1, f_2 \right\rangle \coloneqq \int_{\slzh} f_1(z) \ol{f_2(z)} \, \dmu(z).\]

The $\slr$-invariant extension of $\Delta$ from functions on $\Hb$ to functions on $\slr$ is given by the Casimir operator
\[\Omega \coloneqq -y^2\left(\frac{\partial^2}{\partial x^2} + \frac{\partial^2}{\partial y^2}\right) + y\frac{\partial^2}{\partial x \partial \theta}.\]
We also have raising and lowering operators
\[R \coloneqq e^{2i\theta} iy \left(\frac{\partial}{\partial x} - i\frac{\partial}{\partial y}\right) - e^{2i\theta} \frac{i}{2} \frac{\partial}{\partial \theta}, \qquad L \coloneqq -e^{-2i\theta} iy \left(\frac{\partial}{\partial x} + i\frac{\partial}{\partial y}\right) + e^{-2i\theta} \frac{i}{2} \frac{\partial}{\partial \theta}.\]
The operators $\Omega,R,L$ are initially defined on the space $C^{\infty}(\slzr)$ of smooth functions. The raising and lowering operators $R$ and $L$ map weight $2k$ eigenfunctions of $\Omega$ to weight $2k+2$ and $2k-2$ eigenfunctions of $\Omega$ respectively. On the space $C^{\infty}(\slzr) \cap L^2(\slzr)$, these operators are such that $-L$ is adjoint to $R$, while $\Omega$ is self-adjoint, so that for all smooth square-integrable functions $\Phi_1,\Phi_2 : \slzr \to \C$,
\[\langle R \Phi_1,\Phi_2 \rangle = -\langle \Phi_1,L \Phi_2\rangle, \qquad \langle \Omega \Phi_1, \Phi_2\rangle = \langle \Phi_1, \Omega \Phi_2\rangle.\]
The Casimir operator $\Omega$ admits a canonical self-adjoint extension, the \emph{Friedrichs extension}, to $L^2(\slzr)$ (cf.\ \cite[Theorem A.3]{Iwa02}).

These operators descend to $\slzh$. Considering the action of $\Omega$ on weight $2k$ functions on $\Hb$, we have the corresponding weight $2k$ Laplacian on $\Hb$ given by
\[\Delta_{2k} \coloneqq \Delta + 2kiy \frac{\partial}{\partial x},\]
which preserves the weight of a weight $2k$ function on $\Hb$. Similarly, $R$ and $L$ become the raising and lowering operators
\[R_{2k} \coloneqq iy\left(\frac{\partial}{\partial x}-i\frac{\partial}{\partial y} \right)+k, \qquad L_{2k} \coloneqq -iy\left(\frac{\partial}{\partial x}+i\frac{\partial}{\partial y} \right)-k.\]
The raising operator $R_{2k}$ maps weight $2k$ functions to weight $2k + 2$ functions, whereas the lowering operator $L_{2k}$ maps weight $2k$ functions to weight $2k - 2$ functions. In particular, the raising and lowering operators map eigenfunctions of $\Delta_{2k}$ to eigenfunctions of $\Delta_{2k+2}$ and $\Delta_{2k-2}$ respectively. The inner product on weight $2k$ functions on $\Hb$ is such that $-L_{2k + 2}$ is adjoint to $R_{2k}$, so that
\[\langle R_{2k} f_1,f_2 \rangle = -\langle f_1,L_{2k + 2} f_2\rangle\]
for all weight $2k$ square-integrable functions $f_1 : \Hb \to \C$ and weight $2k + 2$ square-integrable functions $f_2 : \Hb \to \C$.

\subsection{Eigenfunctions of the Laplacian}
\label{sec:functions}

Next, we describe the eigenfunctions of $\Delta_{2k}$ on $\slzh$. Useful references for these include \cite[Chapter 2]{Bum97}, \cite[Section 4]{DFI02}, and \cite{Roe66}.

For any $k \in \Z$, there are up to four classes of eigenfunctions of $\Delta_{2k}$ of weight $2k$. Each of these is an eigenfunction of the $n$-th Hecke operator $T_n$ for each $n\in\N$, where $T_n$ acts on functions $f:\Hb\to\C$ via
\begin{equation}
\label{eqn:Heckeoperator}
(T_nf)(z) \coloneqq \frac{1}{\sqrt{n}}\sum_{ad=n}\sum_{b=1}^d f\left(\frac{az+b}{d} \right).
\end{equation}
Each of these eigenfunctions of $\Delta_{2k}$ also lifts to a function on $\slzr$ that is an eigenfunction of $\Omega$.

\begin{itemize}
\item When $k=0$, we have constant functions.
\item When $k \geq 0$, we have shifted Maa{\ss} cusp forms of weight $2k$ given by $R_{2k-2}\cdots R_0 \ph_j$, where $\ph_j$ is a Hecke--Maa{\ss} cusp form of weight $0$ with $j$-th Laplacian eigenvalue $\lambda_j$ (ordered by size). Similarly, when $k \le 0$ we have forms of weight $2k$ given by $L_{2k+2}L_{2k+4}\cdots L_{0}\ph_j$. Any weight $0$ form $\varphi_j$ can be written as a sum of an even part and an odd part with the same Laplacian and Hecke eigenvalues, so we may additionally assume that $\varphi_j$ is either even, so that $\varphi_j(-\overline{z}) = \varphi_j(z)$, or odd, so that $\varphi_j(-\overline{z}) = -\varphi_j(z)$. We let $\kappa_j \in \{0,1\}$ be such that $\kappa_j$ is $0$ if $\varphi_j$ is even and $\kappa_j$ is $1$ if $\varphi_j$ is odd; the \emph{parity} of $\varphi_j$ is then defined to be $\epsilon_j = (-1)^{\kappa_j}$, so that
\begin{equation}
\label{eqn:paritydefeq}
\varphi_j(-\overline{z}) = (-1)^{\kappa_j} \varphi_j(z) = \epsilon_j \varphi_j(z).
\end{equation}
The \emph{spectral parameter} $r_j \in [0,\infty) \cup i(0,\frac{1}{2})$ satisfies $\lambda_j = \frac 14 + r_j^2$; since the Selberg eigenvalue conjecture is known for $\slzh$, $r_j$ must be real and positive (with the smallest spectral parameter being $r_1 \approx 9.534$). Once $L^2$-normalized with respect to the measure $\dmu$ on $\slzh$, the eigenfunctions $\varphi_j$ yield probability measures $\dmu_j = |\varphi_j|^2\, \dmu$ on $\slzh$. The corresponding $L^2$-normalized shifted Hecke--Maa\ss{} cusp forms of weight $2k$ are given by
\[\varphi_{j, k} \coloneqq \begin{dcases*}
\frac{\gamer{\frac{1}{2}+ir_j}}{\gamer{\frac{1}{2}+k+ir_j}}R_{2k-2}\cdots R_2R_0\varphi_j & for $k \geq 0$, \\
\frac{\gamer{\frac{1}{2}+ir_j}}{\gamer{\frac{1}{2}-k+ir_j}} L_{2k+2}\cdots L_{-2}L_0\varphi_j & for $k \leq 0$,
\end{dcases*}\]
where the $L^2$-normalization of each $\varphi_{j,k}$ follows from the fact that $-L_{2k + 2}$ is adjoint to $R_{2k}$ (cf.\ \cite[Corollary 4.4]{DFI02}). The associated lift to $\slzr$ is the function $\Phi_{j,k}(z,\theta) \coloneqq \varphi_{j,k}(z) e^{2ki\theta}$, which is an eigenfunction of the Casimir operator $\Omega$ with eigenvalue $\lambda_j$.
\item When $\ell \geq 1$, let $F$ be a holomorphic Hecke cusp form of weight $2\ell$; there are finitely many such cusp forms, and we denote the set of such holomorphic Hecke cusp forms by $\mathcal{H}_{\ell}$. We define a corresponding weight $2\ell$ function $f(z)=y^\ell F(z)$, which is automorphic of weight $2\ell$, so that it satisfies the automorphy condition \eqref{eqn:automorphy} with $k = \ell$. When $k \geq \ell$, we have shifted holomorphic Hecke cusp forms of weight $2k$ given by $R_{2k-2}R_{2k-4}\cdots R_{2\ell}f$. Similarly, when $k \leq -\ell$ we have the shifted antiholomorphic Hecke cusp form of weight $2k$ given by $L_{2k+2}L_{2k+4}\cdots L_{-2\ell}\overline{f}$. Note that $L_{2\ell} f = R_{-2\ell}\overline{f} = 0$, so that there are no nonzero shifted cusp forms of weight $2k$ with $-\ell < k < \ell$. If $f$ is $L^2$-normalized with respect to the measure $\dmu$ on $\slzh$, then the corresponding $L^2$-normalized shifted holomorphic or antiholomorphic Hecke cusp forms of weight $2k$ are given by
\[f_k \coloneqq \begin{dcases*}
\sqrt{\frac{\gamer{2\ell}}{\gamer{k + \ell}\gamer{k - \ell + 1}}} R_{2k-2}\cdots R_{2\ell} f & for $k \geq \ell$, \\
\sqrt{\frac{\gamer{2\ell}}{\gamer{-k + \ell}\gamer{-k - \ell + 1}}} L_{2k+2}\cdots L_{-2\ell} f & for $k \leq -\ell$.
\end{dcases*}\]
Once more, the $L^2$-normalization of each $f_k$ follows from the fact that $-L_{2k + 2}$ is adjoint to $R_{2k}$ (cf.\ \cite[Corollary 4.4]{DFI02}). The associated lift to $\slzr$ is the function $\Psi_{F,k}(z,\theta) \coloneqq f_k(z) e^{2ki\theta}$, which is an eigenfunction of the Casimir operator $\Omega$ with eigenvalue $\ell (1 - \ell)$.
\item We have the Eisenstein series of weight $2k$, which is defined by
\begin{equation}
\label{eqn:E2kdef}
E_{2k}(z, s) \coloneqq \sum_{\gamma \in \Gamma_\infty \backslash \slz} j_{\gamma}(z)^{-2k} \Im(\gamma z)^s,
\end{equation}
where
\[j_\gamma (z) \coloneqq \frac{cz+d}{|cz+d|}\text{ for } \gamma = \begin{pmatrix}
 a & b \\ c & d
\end{pmatrix}. \]
This series converges absolutely when $\Re(s) > 1$, and can be holomorphically extended to the line $\Re(s) = \frac{1}{2}$. For $s = \frac{1}{2} + it$, $E_{2k}(z, \frac{1}{2} + it)$ is an eigenfunction of $\Delta_{2k}$ with eigenvalue $\frac{1}{4} + t^2$. Letting $E(z,s) = E_0(z,s)$, we note that
\[E_{2k}\left(z, \frac{1}{2} + it\right) = \begin{dcases*}
\frac{\gamer{\frac{1}{2}+it}}{\gamer{\frac{1}{2}+k+it}}R_{2k-2}\cdots R_2R_0 E\left(z, \frac{1}{2} + it\right) & for $k \geq 0$, \\
\frac{\gamer{\frac{1}{2}+it}}{\gamer{\frac{1}{2}-k+it}} L_{2k+2}\cdots L_{-2}L_0 E\left(z, \frac{1}{2} + it\right) & for $k \leq 0$.
\end{dcases*}\]
The associated lift to $\slzr$ is $\tilda{E}_{2k}(z,\theta,\frac{1}{2} + it) \coloneqq E_{2k}(z,\frac{1}{2} + it)e^{2ki\theta}$, which is an eigenfunction of the Casimir operator $\Omega$ with eigenvalue $\frac{1}{4} + t^2$.
\end{itemize}
These Laplacian eigenfunctions satisfy orthonormality relations: we have that
\begin{align*}
\left\langle \Phi_{j,k},1\right\rangle & = \left\langle \Psi_{F,k},1\right\rangle = 0,	\\
\left\langle \Phi_{j,k_1},\tilda{E}_{2k_2}\left(\cdot,\cdot, \frac{1}{2}+it\right)\right\rangle & = \left\langle \Psi_{F,k_1},\tilda{E}_{2k_2}\left(\cdot,\cdot, \frac{1}{2}+it\right)\right\rangle = 0,	\\
\left\langle \Phi_{j,k_1},\Psi_{F,k_2}\right\rangle & = 0,	\\
\left\langle \Phi_{j_1,k_1},\Phi_{j_2,k_2}\right\rangle & = \begin{dcases*}
1 & if $j_1 = j_2$ and $k_1 = k_2$,	\\
0 & otherwise,	\\
\end{dcases*}	\\
\left\langle \Psi_{F_1,k_1},\Psi_{F_2,k_2}\right\rangle & = \begin{dcases*}
1 & if $F_1 = F_2$ and $k_1 = k_2$,	\\
0 & otherwise.
\end{dcases*}
\end{align*}
The Fourier--Whittaker expansions of $\ph_{j,k}$, $f_k$, and $E_{2k}$ are given in Section \ref{sec:fourier}.

\subsection{Spectral Decomposition}

We state the spectral decomposition of $L^2(\slzr)$ below, which follows by combining \cite[Corollary to Theorem 2.3.4]{Bum97} with \cite[Propositions 4.1, 4.2, and 4.3]{DFI02}; for a general reference in the ad\`{e}lic setting, see \cite[Theorem 1.3]{Wu17}. Given $a \in L^2(\slzr)$, we have the spectral decomposition
\begin{multline*}
a(z,\theta)= \frac{3}{\pi} \left\langle a, 1\right\rangle + \sum_{\ell = 1}^{\infty} \sum_{k = -\infty}^{\infty} \left\langle a, \Phi_{\ell,k} \right\rangle \Phi_{\ell,k}(z,\theta) + \sum_{\ell = 1}^{\infty} \sum_{F \in \mathcal{H}_{\ell}} \sum_{\substack{k = -\infty \\ |k| \geq \ell}}^{\infty} \left\langle a, \Psi_{F,k} \right\rangle \Psi_{F,k}(z,\theta) \\
+ \frac{1}{4 \pi} \sum_{k = -\infty}^{\infty} \int_{-\infty}^\infty \left\langle a, \tilda{E}_{2k}\left(\cdot,\cdot, \frac{1}{2}+it\right)\right\rangle \tilda{E}_{2k}\left(z,\theta,\frac{1}{2}+it\right) \, \dt.
\end{multline*}
This converges in the $L^2$-sense. If moreover $a$ is smooth and compactly supported, then this converges absolutely and uniformly on compact sets.

We additionally have Parseval's identity: for $a_1,a_2 \in L^2(\slzr)$, we have the absolutely convergent spectral expansion
\begin{multline}
\label{eqn:Parseval}
\langle a_1,a_2\rangle = \frac{3}{\pi} \left\langle a_1, 1\right\rangle \left\langle 1,a_2\right\rangle + \sum_{\ell = 1}^{\infty} \sum_{k = -\infty}^{\infty} \left\langle a_1, \Phi_{\ell,k} \right\rangle \left\langle \Phi_{\ell,k},a_2 \right\rangle + \sum_{\ell = 1}^{\infty} \sum_{F \in \mathcal{H}_{\ell}} \sum_{\substack{k = -\infty \\ |k| \geq \ell}}^{\infty} \left\langle a_1, \Psi_{F,k} \right\rangle \left\langle \Psi_{F,k},a_2 \right\rangle \\
+ \frac{1}{4 \pi} \sum_{k = -\infty}^{\infty} \int_{-\infty}^\infty \left\langle a_1, \tilda{E}_{2k}\left(\cdot,\cdot, \frac{1}{2}+it\right)\right\rangle \left\langle \tilda{E}_{2k}\left(\cdot,\cdot, \frac{1}{2}+it\right),a_2\right\rangle \, \dt.
\end{multline}

\subsection{QUE on the Modular Surface}
\label{sec:QUEmodularsurface}

There is a significantly simpler formula for the microlocal lift $\omega_j$ of $\ph_j$ to a measure on $\slzr$. We again recall from \cite[(1.18)]{Zel91} that
\[\domega_j(z, \theta) \coloneqq \ph_j(z)\ol{u_j(z, \theta)} \, \domega(z, \theta), \quad u_j(z,\theta) \coloneqq \frac{3}{\pi} \sum_{k = -\infty}^{\infty}\varphi_{j, k}(z)e^{2ki\theta},\]
where convergence of the sum defining $u_j$ is in distribution (i.e.\ $\varphi_j\overline{u_j} \, \domega$ is the limit of measures of the partial sums defining $u_j$)\footnote{More precisely, the measure is defined by $\int a \, \domega_j = \lim_{K \to \infty}\frac{3}{\pi} \int a\varphi_j\sum_{k=-K}^K\ol{\varphi_{j,k}}e^{2ki\theta}\, \domega$.}. In particular, we have that
\begin{align}
\label{eqn:Phiellkdomegaj}
\int_{\slzr} \Phi_{\ell,k}(z,\theta) \, \domega_j(z,\theta) & = \frac{3}{\pi} \int_{\slzh} \varphi_j(z) \overline{\varphi_{j,k}(z)} \varphi_{\ell,k}(z) \, \dmu(z),	\\
\label{eqn:PsiFkdomegaj}
\int_{\slzr} \Psi_{F,k}(z,\theta) \, \domega_j(z,\theta) & = \frac{3}{\pi} \int_{\slzh} \varphi_j(z) \overline{\varphi_{j,k}(z)} f_k(z) \, \dmu(z),	\\
\label{eqn:tildaE2kdomegaj}
\int_{\slzr} \tilda{E}_{2k}\left(z,\theta,\frac{1}{2}+it\right) \, \domega_j(z,\theta) & = \frac{3}{\pi} \int_{\slzh} \varphi_j(z) \overline{\varphi_{j,k}(z)} E_{2k}\left(z,\frac{1}{2} + it\right) \, \dmu(z).
\end{align}

Using the spectral decomposition for $L^2(\slzr)$ and \eqref{eqn:Phiellkdomegaj}, \eqref{eqn:PsiFkdomegaj}, and \eqref{eqn:tildaE2kdomegaj}, for any $a \in C_{c,K}^{\infty}(\slzr)$, we may therefore write
\begin{multline}
\label{eqn:clearnerTheorem}
\int_{\slzr} a(z, \theta) \, \domega_j(z, \theta) = \int_{\slzr} a(z,\theta) \, \domega(z,\theta)	\\
+ \frac{3}{\pi} \sum_{\ell = 1}^{\infty} \sum_{k = -\infty}^{\infty} \left\langle a, \Phi_{\ell,k} \right\rangle \int_{\slzh} \varphi_j(z) \overline{\varphi_{j,k}(z)} \varphi_{\ell,k}(z) \, \dmu(z)	\\
+ \frac{3}{\pi} \sum_{\ell = 1}^{\infty} \sum_{F \in \mathcal{H}_{\ell}} \sum_{\substack{k = -\infty \\ |k| \geq \ell}}^{\infty} \left\langle a, \Psi_{F,k} \right\rangle \int_{\slzh} \varphi_j(z) \overline{\varphi_{j,k}(z)} f_k(z) \, \dmu(z) \\
+ \frac{3}{4 \pi^2} \sum_{k = -\infty}^{\infty} \int_{-\infty}^\infty \left\langle a, \tilda{E}_{2k}\left(\cdot, \cdot,\frac{1}{2}+it\right)\right\rangle \int_{\slzh} \varphi_j(z) \overline{\varphi_{j,k}(z)} E_{2k}\left(z,\frac{1}{2} + it\right) \, \dmu(z) \, \dt.
\end{multline}
To establish Theorem \ref{thm:mainthm}, it therefore suffices to bound each of the three integrals \eqref{eqn:Phiellkdomegaj}, \eqref{eqn:PsiFkdomegaj}, and \eqref{eqn:tildaE2kdomegaj}. The next few sections will be dedicated to resolving each individual case.

\section{Relevant Tools for Computation}

\subsection{Fourier--Whittaker Expansions}
\label{sec:fourier}

We explicitly write out the Fourier--Whittaker expansion for shifted Hecke--Maa{\ss} cusp forms, shifted holomorphic or antiholomorphic Hecke cusp forms, and weighted Eisenstein series. These involve Whittaker functions $W_{\alpha,\beta}(y)$, which are certain special functions on $\R_+$ associated to a pair of parameters $\alpha,\beta \in \C$ that decay exponentially as $y$ tends to infinity in the sense that $\lim_{y \to \infty} y^{-\alpha} e^{y/2} W_{\alpha,\beta}(y) = 1$ (cf.\ \cite[Chapter 16]{WW21} and \cite[Sections 9.22--9.23]{GR15}); they satisfy the second order linear ordinary differential equation
\[W_{\alpha,\beta}''(y) + \left(-\frac{1}{4} + \frac{\alpha}{y} + \frac{\frac{1}{4} - \beta^2}{y^2}\right) W_{\alpha,\beta}(y) = 0.\]
In particular cases, these Whittaker functions are simpler: for $\alpha \in \C$, we have that $W_{\alpha,\alpha - 1/2}(y) = y^{\alpha} e^{-y/2}$, while for $\beta \in \C$, we have that $W_{0,\beta}(y) = \sqrt{\frac{y}{\pi}} K_{\beta}(\frac{y}{2})$, where $K_{\beta}(y)$ denotes the modified Bessel function of the second kind.
\begin{itemize}
\item For Hecke--Maa{\ss} cusp forms of weight $0$, we have by \cite[Theorem 3.11.8]{GH11} the Fourier expansion
\begin{equation}
\label{eqn:phjFW}
\ph_j(z)= \sum_{\substack{n = -\infty \\ n \neq 0}}^{\infty} \sgn(n)^{\kappa_j} \rho_j(1) \frac{\lambda_j(|n|)}{\sqrt{|n|}} W_{0,i r_j}(4 \pi |n|y)e(nx).
\end{equation}
Here $\lambda_j(n)$ is the $n$-th Hecke eigenvalue of $\varphi_j$, $\kappa_j \in \{0,1\}$ is as in \eqref{eqn:paritydefeq}, and the first Fourier coefficient $\rho_j(1) \in \R_+$ satisfies
\begin{equation}
\label{eqn:cusp-form-norm}
\rho_j(1)^2 = \frac{\cosh \pi r_j}{2 L(1,\ad \varphi_j)} = \frac{\pi}{2 \Gamma\left(\frac{1}{2} + ir_j\right) \Gamma\left(\frac{1}{2} - ir_j\right) L(1,\ad \varphi_j)}.
\end{equation}
By the Rankin--Selberg method (cf.\ \cite[Section 19]{DFI02}), this ensures that $\varphi_j$ is $L^2$-normalised. The positive constant $L(1,\ad \varphi_j)$ is the value at $s = 1$ of the adjoint $L$-function $L(s,\ad \varphi_j)$ defined in \eqref{eqn:Lsaddefeq} below. One can use the recurrence relations for Whittaker functions \cite[(9.234)]{GR15} to establish that for shifted Maa{\ss} cusp forms of weight $2k$,
\begin{equation}
\label{eqn:phjkFW}
\ph_{j,k}(z) = \sum_{\substack{n = -\infty \\ n \neq 0}}^{\infty} D_{k, r_j}^{\sgn(n)} \sgn(n)^{\kappa_j} \rho_j(1) \frac{\lambda_j(|n|)}{\sqrt{|n|}} W_{\sgn(n)k,i r_j}(4 \pi |n|y) e(nx),
\end{equation}
where we define the constants
\begin{equation}
\label{eqn:Dkrpm}
D_{k, r}^\pm \coloneqq \frac{(-1)^k \Gamma\left(\frac{1}{2}+ir\right)}{\Gamma\left(\frac{1}{2}\pm k+i r\right)}
\end{equation}
for $r \in \C$ and $k \in \Z$. One sees from \cite[Corollary 4.4]{DFI02} that $\ph_{j,k}$ is also $L^2$-normalized.
\item For shifted holomorphic Hecke cusp forms of positive weight $2k$, we may write the unshifted form as $f = y^\ell F$ for some holomorphic Hecke cusp form $F$ of weight $2\ell$. Once more by \cite[Theorem 3.11.8]{GH11}, this has the Fourier expansion
\[f(z) = \sum_{n=1}^\infty \rho_F(1) \frac{\lambda_F(n)}{\sqrt{n}} (4\pi ny)^\ell e(nz) = \sum_{n=1}^\infty \rho_F(1) \frac{\lambda_F(n)}{\sqrt{n}} W_{\ell,\ell-\frac{1}{2}}(4 \pi ny) e(nx),\]
where again $\lambda_F(n)$ is the $n$-th Hecke eigenvalue of $F$ and the first Fourier coefficient $\rho_F(1) \in \R_+$ satisfies
\begin{equation}
\label{eqn:hol-cusp-form-norm}
\rho_F(1)^2 = \frac{\pi}{2 \gamer{2\ell} L(1,\ad F)},
\end{equation}
which ensures that $f$ is $L^2$-normalized by the Rankin--Selberg method. Once more, the positive constant $L(1,\ad F)$ is the value at $s = 1$ of the adjoint $L$-function $L(s,\ad F)$ defined in \eqref{eqn:Lsaddefeq} below. Applying raising operators, we have that
\[(R_{2k-2} \cdots R_{2\ell+2}R_{2\ell} f)(z) = (-1)^{k-\ell} \sum_{n=1}^\infty \rho_F(1) \frac{\lambda_F(n)}{\sqrt{n}} W_{k,\ell-\frac{1}{2}}(4 \pi ny) e(nx).\]
Finally, we see from \cite[Corollary 4.4]{DFI02} and \cite[(4.60)]{DFI02} that in order to $L^2$-normalize such a form, we have the final Fourier expansion
\begin{equation}
\label{eqn:fkFW}
f_k(z) = \sum_{n=1}^\infty C_{k,\ell} \rho_F(1) \frac{\lambda_F(n)}{\sqrt{n}} W_{k,\ell-\frac{1}{2}}(4 \pi ny)e(nx)
\end{equation}
with
\begin{equation}
\label{eqn:Ckl}
C_{k,\ell} \coloneqq (-1)^{k-\ell} \sqrt{\frac{\gamer{2\ell}}{\gamer{k+\ell}\gamer{k-\ell+1}}}.
\end{equation}
Similarly, for shifted antiholomorphic Hecke cusp forms of negative weight $-2k$, we may write the unshifted Hecke cusp form as $\overline{f} = y^{\ell} \overline{F}$. One has
\[\ol{f(z)} = \sum_{n=1}^\infty \rho_F(1) \frac{\lambda_F(n)}{\sqrt{n}} W_{\ell,\ell - \frac{1}{2}}(4\pi ny) e(-nx),\]
so that
\begin{equation}
\label{eqn:f-kFW}
f_{-k}(z) = \sum_{n=1}^\infty C_{k,\ell} \rho_F(1) \frac{\lambda_F(n)}{\sqrt{n}} W_{k,\ell-\frac{1}{2}}(4 \pi ny)e(-nx).
\end{equation}
\item Finally we recall the Fourier expansion of Eisenstein series. Define
\[\lambda(n,t) \coloneqq \sum_{ab = n}a^{it}b^{-it}.\]
For weight $0$ Eisenstein series, we have from \cite[(1.3)]{Jak94} that
\begin{equation}
\label{eqn:EFW}
E\left(z,\frac{1}{2}+it\right) = y^{\frac{1}{2}+it} + \frac{\xi(1-2it)}{\xi(1+2it)}y^{\frac{1}{2}-it} + \sum_{\substack{n = -\infty \\ n \neq 0}}^{\infty} \frac{1}{\xi(1+2it)} \frac{\lambda(|n|,t)}{\sqrt{|n|}}W_{0,it}(4\pi|n|y)e(nx),
\end{equation}
where $\xi(s) \coloneqq \pi^{-s/2}\Gamma(\frac{s}{2})\zeta(s)$ is the completed Riemann zeta function. For weight $2k$ Eisenstein series, we then have that
\begin{multline*}
E_{2k}\left(z,\frac{1}{2}+it\right) = y^{\frac{1}{2}+it} + \frac{(-1)^k \Gamma\left(\frac{1}{2}+it\right)^2}{\Gamma\left(\frac{1}{2}-k+it\right)\Gamma\left(\frac{1}{2}+k+it\right)} \frac{\xi(1-2it)}{\xi(1+2it)}y^{\frac{1}{2}-it}\\
+ \sum_{\substack{n = -\infty \\ n \neq 0}}^{\infty} \frac{D_{k,t}^{\sgn(n)}}{\xi(1 + 2it)} \frac{\lambda(|n|,t)}{\sqrt{|n|}} W_{\sgn(n)k,it}(4 \pi |n|y) e(nx).
\end{multline*}
\end{itemize}

\subsection{\texorpdfstring{$L$}{L}-Functions}
\label{sec:Lfunctions}

We give a quick overview of all the necessary theory surrounding $L$-functions. A general discussion of the theory of $L$-functions and their bounds can be found in \cite[Chapter 5]{IK04}.

Let $\phi$ be either a Hecke--Maa\ss{} cusp form or a holomorphic Hecke cusp form. Such a Hecke cusp form $\phi$ has an associated $L$-function $L(s,\phi)$. Since the Hecke operators $T_n$ satisfy the multiplicativity relation
\[T_mT_n = \sum_{d \mid (m, n)}T_{\frac{mn}{d^2}},\]
the Hecke eigenvalues $\lambda_{\phi}(n)$ must satisfy the corresponding Hecke relations
\[\lambda_{\phi}(m) \lambda_{\phi}(n) = \sum_{d \mid (m, n)} \lambda_{\phi}\left(\frac{mn}{d^2}\right).\]
We may therefore define for $\Re(s) > 1$ the degree $2$ $L$-function
\[L(s,\phi) \coloneqq \sum_{n = 1}^{\infty} \frac{\lambda_{\phi}(n)}{n^s} = \prod_p \frac{1}{1 - \lambda_{\phi}(p)p^{-s} + p^{-2s}}.\]
This can be analytically continued to a holomorphic function on $\C$. We may write the Euler product as
\[L(s, \phi) = \prod_p \frac{1}{(1-\alpha_{\phi,1}(p) p^{-s})^{-1}(1-\alpha_{\phi,2}(p) p^{-s})},\]
where the \emph{Satake parameters} $\alpha_{\phi,1}(p),\alpha_{\phi,2}(p)$ satisfy
\[\alpha_{\phi,1}(p) + \alpha_{\phi,2}(p) = \lambda_{\phi}(p), \qquad \alpha_{\phi,1}(p) \alpha_{\phi,2}(p) = 1.\]

We also define relevant higher degree $L$-functions: for $m \leq 3$, we define the degree $2^m$ $L$-function
\[L(s,\phi_{1} \otimes \cdots \otimes \phi_m) \coloneqq \prod_p \prod_{(b_j) \in \{1,2\}^m} \frac{1}{1- \alpha_{\phi_1,b_1}(p)\cdots \alpha_{\phi_m,b_m}(p)p^{-s}}.\]We additionally define the degree $3$ and degree $6$ $L$-functions
\begin{align}
\label{eqn:Lsaddefeq}
L(s,\ad \phi) & \coloneqq \frac{L(s,\phi \otimes \phi)}{\zeta(s)}, \\
L(s,\ad \phi_1 \otimes \phi_2) & \coloneqq \frac{L(s,\phi_1 \otimes \phi_1 \otimes \phi_2 )}{L(s,\phi_2)},
\end{align}
Each of these $L$-functions has a meromorphic continuation to $\C$. For later use, we will also recall the identities
\begin{align}
\label{eqn:Dirserieslambda2}
\sum_{n=1}^\infty \frac{\lambda_{\phi}(n)^2}{n^s} & = \frac{\zeta(s)L(s,\ad\phi)}{\zeta(2s)},	\\
\label{eqn:Dirserieslambdalambdat}
\sum_{n=1}^\infty \frac{\lambda_{\phi}(n)\lambda(n,t)}{n^{s}} & = \frac{L(s + it,\phi)L(s - it,\phi)}{\zeta(2s)},
\end{align}
which are both valid for $\Re(s) > 1$.

For any such $L$-function $L(s,\Pi)$ of degree $d$, where $\Pi$ is a placeholder for one of the automorphic objects listed above, we have a corresponding gamma factor of the form
\[L_\infty(s,\Pi)=\prod_{i=1}^d \Gamma_\R(s+\mu_i)\]
for some \emph{Langlands parameters} $\mu_i \in \C$, where $\Gamma_\R(s) \coloneqq \pi^{-s/2}\Gamma(\frac{s}{2})$. The completed $L$-function $\Lambda(s,\Pi) \coloneqq L(s,\Pi) L_\infty(s,\Pi)$ has a meromorphic continuation to $\C$ and satisfies a functional equation of the form $\Lambda(1 - s,\Pi) = \epsilon_{\Pi} \Lambda(s,\widetilde{\Pi})$, where the \emph{epsilon factor} $\epsilon_{\Pi}$ is a complex number of absolute value $1$, while $\Lambda(s,\widetilde{\Pi}) = \overline{\Lambda(\overline{s},\Pi)}$.

\subsection{Bounds for \texorpdfstring{$L$}{L}-Functions}

Various $L$-functions will appear in the integrals computed later in the paper. As such, the study of the sizes of our integrals is connected to the study of the sizes of such $L$-functions. In particular, estimating relevant integrals can be reduced to estimating $L(1,\Pi)$ and $L(\frac{1}{2}+it,\Pi)$ for various values of $t$ and $\Pi$. We discuss the specific relevant bounds.

For $\phi$ a Hecke--Maa\ss{} cusp form with spectral parameter $r$, combining the work of \cite[Main Theorem]{GHL94} and \cite[Corollary 1]{Li10} with \eqref{eqn:cusp-form-norm}, we have that
\begin{equation}
\label{eqn:L1adboundsMaass}
\frac{1}{\log r} \ll L(1,\ad\phi) \ll \exp\left(C (\log r)^{\frac{1}{4}} (\log \log r)^{\frac{1}{2}}\right)
\end{equation}
for some absolute constant $C > 0$. Similarly, for $\phi$ a holomorphic Hecke cusp form of weight $\ell$, we have that
\begin{equation}
\label{eqn:L1adboundshol}
\frac{1}{\log \ell} \ll L(1,\ad\phi) \ll (\log \ell)^3,
\end{equation}
where the lower bound again follows from \cite[Main Theorem]{GHL94}, while the upper bound follows from \cite[Proposition 3.2 (i)]{LW06}. Finally, for $t \in \R$, we have the classical bounds \cite[(8.24), Theorem 8.29]{IK04}
\begin{equation}
\label{eqn:zeta1bound}
\frac{1}{(\log(3 + |t|))^{\frac{2}{3}} (\log \log (9 + |t|))^{\frac{1}{3}}} \ll \left|\zeta(1 + it)\right| \ll \frac{\log(3 + |t|)}{\log \log (9 + |t|)}.
\end{equation}
We shall only make use of the lower bounds in \eqref{eqn:L1adboundsMaass}, \eqref{eqn:L1adboundshol}, and \eqref{eqn:zeta1bound}. In particular, the lower bound in \eqref{eqn:L1adboundsMaass} is precisely the cause of the presence of the term $\log \lambda_j$ on the right-hand side of \eqref{eqn:mainthm}.

To discuss values of an $L$-function $L(s,\Pi)$ on the line $\Re(s) = \frac{1}{2}$, we define the \emph{analytic conductor}
\[C(s,\Pi) \coloneqq \prod_{i=1}^d (1+|s+\mu_i|).\]
The analytic conductor can be thought of as measuring the \emph{complexity} of the $L$-function $L(s,\Pi)$. The \emph{convexity bound} bound for such an $L$-function on the line $\Re(s) = \frac{1}{2}$ is
\[L(s,\Pi) \ll_\e C(s,\Pi)^{\frac{1}{4}+\e}.\]
Such a bound is known for all of the $L$-functions that we study below; it is a consequence of the Phragm\'{e}n--Lindel\"{o}f convexity principle, the functional equations for these $L$-functions, and upper bounds for these $L$-functions at the edge of the critical strip \cite[Theorem 2]{Li10}. A \emph{subconvex} bound is a bound of the form
\[L(s,\Pi) \ll C(s,\Pi)^{\frac{1}{4} - \delta}\]
for some fixed $\delta > 0$; in contrast, for some of the $L$-functions that we study below, such a bound is not yet known. The \emph{generalized Lindel\"{o}f hypothesis} is the conjecture that such a subconvex bound holds with $\delta = \frac{1}{4} - \e$ for any fixed $\e > 0$. The generalized Lindel\"{o}f hypothesis would follow as a consequence from the \emph{generalized Riemann hypothesis}, which is the conjecture that the only zeroes of $L(s,\Pi)$ in the critical strip $0 < \Re(s) < 1$ lie on the critical line $\Re(s) = \frac{1}{2}$.

We make this explicit for various $L$-functions of interest to us by recalling the values of the Langlands parameters $\mu_i$ in these cases. An elementary example is the Riemann zeta function, which is of degree $1$: the Langlands parameter is simply $\mu_1 = 0$, so that the convexity bound is
\begin{equation}
\label{eqn:zetaconvexity}
\zeta\left(\frac{1}{2} + it\right) \ll_{\e} (1 + |t|)^{\frac{1}{4} + \e}.
\end{equation}

Next, from \cite[Sections 5.11 and 5.12]{IK04}, when $\ph$ and $\tilda{\ph}$ are Maa\ss{} cusp forms with spectral parameters $r$ and $\tilda{r}$ and parities $\epsilon$ and $\tilda{\epsilon}$, we have that
\begin{align*}
L_\infty(s, \ph) & = \Gamma_\R\left(s + \frac{1 - \epsilon}{2} + ir\right) \Gamma_\R\left(s + \frac{1 - \epsilon}{2} - ir\right),	\\
L_\infty(s, \ad \ph) & = \Gamma_\R(s+2ir) \Gamma_\R(s) \Gamma_\R(s-2ir)\\
L_\infty(s,\ad \ph \otimes \tilda{\ph}) & = \prod_{\pm } \Gamma_\R\left(s + \frac{1 - \tilda{\epsilon}}{2} + 2ir \pm i\tilda{r}\right) \Gamma_\R\left(s + \frac{1 - \tilda{\epsilon}}{2} \pm i\tilda{r}\right) \Gamma_\R\left(s + \frac{1 - \tilda{\epsilon}}{2} - 2ir \pm i\tilda{r}\right).
\end{align*}
In particular, we have the convexity bounds
\begin{align}
\label{eqn:L1/2phconvexity}
L\left(\frac{1}{2}, \ph\right) & \ll_{\e} r^{\frac{1}{2} + \e},	\\
\label{eqn:L1/2+itadphconvexity}
L\left(\frac{1}{2} + it, \ad \ph\right) & \ll_{\e} ((1 + |t|)(1 + |r + t|)(1 + |r - t|))^{\frac{1}{4} + \e},	\\
\label{eqn:L1/2adphitildaphconvexity}
L\left(\frac{1}{2},\ad \ph \otimes \tilda{\ph}\right) & \ll_{\e} (\tilda{r} (r + \tilda{r})(1 + |r - \tilda{r}|))^{\frac{1}{2} + \e}.
\end{align}

For our applications regarding QUE, we need to assume hypothetical improvements upon \eqref{eqn:L1/2+itadphconvexity} and \eqref{eqn:L1/2adphitildaphconvexity} that imply subconvexity in the $r$-aspect but allow for polynomial growth in the $t$-aspect or $\tilde{r}$-aspect, namely bounds of the form
\begin{align*}
L\left(\frac{1}{2} + it, \ad \ph\right) & \ll r^{\frac{1}{2} - 2\delta} (1 + |t|)^A,	\\
L\left(\frac{1}{2},\ad \ph \otimes \tilda{\ph}\right) & \ll r^{1 - 4\delta} \tilde{r}^{2A}
\end{align*}
for some $\delta > 0$ and $A > 0$ (see Theorems \ref{thm:MaassMaassEis} and \ref{thm:MaassMaassMaass}).

Finally, when $\varphi$ is again a Maa\ss{} cusp form with spectral parameter $r$ and $F$ is a holomorphic Hecke cusp form of weight $2\ell > 0$, we have that
\begin{align*}
L_\infty(s, F) & = \Gamma_\R\left(s+\ell+\frac{1}{2}\right) \Gamma_\R\left(s+\ell-\frac{1}{2}\right) \\
L_\infty(s,\ad \ph \otimes F) & = \prod_{\pm } \Gamma_\R\left(s+2ir +\ell \pm \half\right) \Gamma_\R\left(s+\ell\pm \half\right) \Gamma_\R\left(s-2ir +\ell \pm \half\right).
\end{align*}
In particular, we have the convexity bounds
\begin{align}
\label{eqn:L1/2tildaphconvexity}
L\left(\frac{1}{2} + it,F\right) & \ll_{\e} (\ell + |t|)^{\frac{1}{2} + \e},	\\
\label{eqn:L1/2adphitildaphconvexity2}
L\left(\frac{1}{2},\ad \ph \otimes F\right) & \ll_{\e} (\ell (r + \ell)^2)^{\frac{1}{2} + \e}.
\end{align}
Good \cite[Corollary]{Goo82} has proven an improvement upon \eqref{eqn:L1/2tildaphconvexity} that implies subconvexity in the $t$-aspect, namely the subconvex bound
\begin{equation}
\label{eqn:Goodsubconvex}
L\left(\frac{1}{2}+it,F\right) \ll_{\ell,\e} |t|^{\frac{1}{3}+\e}.
\end{equation}
For our applications regarding QUE, we also need to assume a hypothetical improvement upon \eqref{eqn:L1/2adphitildaphconvexity2} that implies subconvexity in the $r$-aspect, namely a bound of the form
\[L\left(\frac{1}{2},\ad \ph \otimes F\right) \ll_{\ell} r^{1 - 4\delta}\]
for some $\delta > 0$ (see Theorem \ref{thm:MaassMaasshol}).

\section{Completing the Proof of Continuous Spectrum QUE}
\label{sec:fixing}

We now supply the necessary computation missing from Jakobson's proof of QUE for Eisenstein series given in \cite{Jak94}. This setting shares many similarities with that of QUE for Hecke--Maa\ss{} cusp forms; the chief alteration is that the microlocal lift
\[\omega_j(z,\theta) \coloneqq \frac{3}{\pi} \ph_j(z) \sum_{k = -\infty}^{\infty} \ol{\varphi_{j, k}(z)e^{2ki\theta}}\]
of a Hecke--Maa\ss{} cusp form $\varphi_j$ is replaced by the microlocal lift
\[\mu_t(z,\theta) \coloneqq \frac{3}{\pi} E\left(z,\frac{1}{2} + it\right) \sum_{k = -\infty}^{\infty} \ol{E_{2k}\left(z,\frac{1}{2} + it\right) e^{2ki\theta}}\]
of an Eisenstein series $E(z,1/2 + it)$. Similar to the discussion in Section \ref{sec:QUEmodularsurface}, Jakobson's proof of QUE for Eisenstein series requires one to bound both of the integrals
\begin{equation}
\label{eqn:Eisellkdomegaj}
\int_{\slzh} E\left(z,\frac{1}{2}+it\right)E_{-2k}\left(z,\frac{1}{2}-it\right) \varphi_{\ell,k}(z) \, \dmu(z),
\end{equation}
where $\varphi_{\ell,k}$ is a shifted Hecke--Maa{\ss} cusp form of weight $2k \geq 0$ arising from a Hecke--Maa\ss{} cusp form $\varphi_{\ell}$ of weight $0$ and spectral parameter $r_{\ell}$, and
\begin{equation}
\label{eqn:EisFkdomegaj}
\int_{\slzh} E\left(z,\frac{1}{2}+it\right)E_{-2k}\left(z,\frac{1}{2}-it\right) f_k(z) \, \dmu(z),
\end{equation}
where $f_k$ is a shifted holomorphic Hecke cusp form of weight $2k > 0$ obtained by raising a holomorphic Hecke cusp form $F$ of weight $2\ell > 0$ with $\ell < k$. One must similarly also bound an integral involving three Eisenstein series, of which two are shifted; this requires some minor alterations involving incomplete Eisenstein series (see \cite[Section 3]{Jak94}), since otherwise this integral would diverge.

Jakobson treats this altered Eisenstein integral in \cite[Proposition 3.1]{Jak94}, while he treats the shifted Hecke--Maa\ss{} cusp form integral \eqref{eqn:Eisellkdomegaj} in \cite[Proposition 2.2]{Jak94}. For the shifted holomorphic Hecke cusp form integral \eqref{eqn:EisFkdomegaj}, Jakobson only treats the \emph{unshifted} case in \cite[Proposition 2.1]{Jak94}.

To treat the shifted case, we first relate an integral of two Eisenstein series and a shifted holomorphic Hecke cusp form to the product of a ratio of $L$-functions and an integral involving Whittaker functions.

\begin{lemma}
\label{lem:EisEishol}
For any shifted holomorphic Hecke cusp form $f_k$ of weight $2k > 0$ obtained by raising a holomorphic Hecke cusp form $F$ of weight $2\ell > 0$ with $\ell < k$, we have that
\begin{multline}
\label{eqn:EisEishol}
\int_{\slzh}E\left(z,\frac{1}{2}+it\right)E_{-2k}\left(z,\frac{1}{2}-it\right)f_k(z) \, \dmu(z)	\\
= (-1)^{k-\ell} \sqrt{\frac{\pi}{2}} (2\pi)^{1 + 2it} \frac{L\left(\frac{1}{2},F\right) L\left(\frac{1}{2} - 2it,F\right)}{\zeta(1 - 2it)\zeta(1+2it) \sqrt{L(1,\ad F)}} \\
\times \int_0^\infty \frac{W_{0,it}(u)}{\Gamma\left(\frac{1}{2} + it\right)} \frac{W_{k,\ell-\frac{1}{2}}(u)}{\sqrt{\Gamma(k + \ell) \Gamma(k - \ell + 1)}} u^{-\frac{1}{2} - it} \, \frac{\du}{u}.
\end{multline}
\end{lemma}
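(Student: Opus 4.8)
The plan is to evaluate the integral by a Rankin--Selberg unfolding, followed by insertion of the Fourier--Whittaker expansions and identification of the resulting Dirichlet series with central $L$-values. The integrand $E(z,\frac12+it)E_{-2k}(z,\frac12-it)f_k(z)$ is $\slz$-invariant, since the three weights $0$, $-2k$, and $2k$ sum to zero, and it decays rapidly at the cusp because $f_k$ is a cusp form; hence the integral converges absolutely. The crucial choice is \emph{which} Eisenstein series to unfold. Unfolding $E(z,\frac12+it)$ would collapse its Fourier expansion to the single term $y^{\frac12+it}$, whereas the target formula \eqref{eqn:EisEishol} retains the Whittaker function $W_{0,it}$ arising from the Fourier expansion \eqref{eqn:EFW} of $E$. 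I would therefore unfold the weight $-2k$ series $E_{-2k}(z,\frac12-it)$ against the weight $2k$ product $h(z)\coloneqq E(z,\frac12+it)f_k(z)$. Writing $E_{-2k}(z,s)=\sum_{\gamma\in\Gamma_\infty\backslash\slz}j_\gamma(z)^{2k}\Im(\gamma z)^s$ as in \eqref{eqn:E2kdef} and using the weight $2k$ transformation $h(\gamma z)=j_\gamma(z)^{2k}h(z)$, the automorphy factor cancels termwise, so that the unfolding produces $\int_0^\infty\int_0^1 y^{s}E(x+iy,\frac12+it)f_k(x+iy)\,\frac{\dx\,\dy}{y^2}$ with $s=\frac12-it$.

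To make this rigorous I would introduce an auxiliary complex variable, replacing $E_{-2k}(z,\frac12-it)$ by $E_{-2k}(z,s)$ and performing the unfolding in the range $\Re(s)>1$, where the defining series converges absolutely. Inserting the Fourier expansions \eqref{eqn:EFW} for $E(z,\frac12+it)$ and \eqref{eqn:fkFW} for $f_k$ and integrating over $x\in[0,1]$, orthogonality of the additive characters forces the frequencies to cancel; since $f_k$ has only positive frequencies $n\geq1$ and no constant term, only the frequency $-n$ terms of $E$ survive and the two constant terms of $E$ drop out entirely. The substitution $u=4\pi n y$ then separates the arithmetic sum from the archimedean integral, leaving $\sum_{n\geq1}\frac{\lambda(n,t)\lambda_F(n)}{n^{s}}$ times $\int_0^\infty u^{s-2}W_{0,it}(u)W_{k,\ell-\frac12}(u)\,\du$, up to the normalizing constants $C_{k,\ell}$ and $\rho_F(1)$ and a factor $(4\pi)^{1-s}$. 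The Dirichlet series is identified via \eqref{eqn:Dirserieslambdalambdat} with $\frac{L(s+it,F)L(s-it,F)}{\zeta(2s)}$.

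At this stage both sides are meromorphic in $s$: the left-hand integral $\int_{\slzh}E(z,\frac12+it)E_{-2k}(z,s)f_k(z)\,\dmu(z)$ is holomorphic near $s=\frac12-it$ by the rapid decay of $f_k$, while the right-hand side is a product of the entire functions $L(s\pm it,F)$, the archimedean integral, and elementary factors. Both agree for $\Re(s)>1$, so by analytic continuation they agree at $s=\frac12-it$, where $s+it=\frac12$, $s-it=\frac12-2it$, and $\zeta(2s)=\zeta(1-2it)$ reproduce exactly the $L$-values in \eqref{eqn:EisEishol}. It then remains to collect constants: substituting $\rho_F(1)$ from \eqref{eqn:hol-cusp-form-norm} and $C_{k,\ell}$ from \eqref{eqn:Ckl} cancels the factor $\sqrt{\Gamma(2\ell)}$ and produces the sign $(-1)^{k-\ell}$ together with $\sqrt{\frac{\pi}{2}}$ and $\frac{1}{\sqrt{\Gamma(k+\ell)\Gamma(k-\ell+1)}}$; using $\xi(1+2it)=\pi^{-\frac12-it}\Gamma(\frac12+it)\zeta(1+2it)$ supplies $\frac{1}{\Gamma(\frac12+it)\zeta(1+2it)}$ and a factor $\pi^{\frac12+it}$, which combines with $(4\pi)^{\frac12+it}$ to give $(2\pi)^{1+2it}$; finally, pulling the $u$-independent gamma factors inside the integral yields the stated form with $u^{-\frac12-it}\frac{\du}{u}=u^{-\frac32-it}\,\du$.

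The main obstacle is the justification of the unfolding and the passage to $s=\frac12-it$, since $E_{-2k}$ is defined there only by meromorphic continuation, outside the range of absolute convergence of both its defining series and the Dirichlet series $\sum_n\lambda(n,t)\lambda_F(n)n^{-s}$; the auxiliary-variable device above, together with the absolute convergence of the full integral guaranteed by the cuspidality of $f_k$, resolves this. A secondary point requiring care is the convergence of the archimedean integral $\int_0^\infty u^{s-2}W_{0,it}(u)W_{k,\ell-\frac12}(u)\,\du$ near $u=0$, which holds for $\Re(s)$ large and continues meromorphically; the remaining manipulations are routine bookkeeping of gamma factors.
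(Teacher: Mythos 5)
Your proposal is correct and follows essentially the same route as the paper: unfold the weight $-2k$ Eisenstein series $E_{-2k}(z,s)$ in the region $\Re(s)>1$, insert the Fourier--Whittaker expansions \eqref{eqn:EFW} and \eqref{eqn:fkFW}, separate the Dirichlet series $\sum_n \lambda_F(n)\lambda(n,t)n^{-s}$ from the archimedean Whittaker integral via $u=4\pi ny$, identify the former with \eqref{eqn:Dirserieslambdalambdat}, and continue analytically to $s=\tfrac12-it$ before collecting constants. The only cosmetic difference is that the paper pins down the holomorphy of the archimedean integral on $\Re(s)>\tfrac12-\ell$ by citing explicit evaluations in Gradshteyn--Ryzhik, where you appeal to meromorphic continuation in slightly vaguer terms; the substance is identical.
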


\begin{proof}
We begin by studying the integral
\[I_1(s) \coloneqq \int_{\slzh}E\left(z,\frac{1}{2} + it\right)E_{-2k}(z,s)f_k(z) \, \dmu(z)\]
when $\Re(s) > 1$, which allows use to make use of the absolutely convergent expression \eqref{eqn:E2kdef} for $E_{-2k}(z,s)$; we later analytically continue $I_1(s)$ to $s = \frac{1}{2} - it$. We first apply the unfolding trick, inserting the identity \eqref{eqn:E2kdef} for $E_{-2k}(z,s)$ and turning the integral over $\slzh$ into one over $\Gamma_\infty \backslash \Hb$ (cf.\ \cite[Proof of Proposition 2.1]{Jak94}). Using the fact that $f_k$ has weight $2k$, we have that
\[I_1(s) = \int_{\Gamma_\infty \backslash \Hb} E\left(z,\frac{1}{2}+it\right) f_k(z) \Im(z)^s \, \dmu(z).\]
We evaluate this integral by taking a fundamental domain of $\Gamma_\infty \backslash \Hb$ to be $[0,1] \times \R_+$. We now insert the Fourier--Whittaker expansions \eqref{eqn:EFW} of $E(z,\frac{1}{2} + it)$ and \eqref{eqn:fkFW} of $f_k(z)$, interchange the order of summation and integration, evaluate the integral over $x \in [0,1]$, and make the substitution $u = 4\pi |n|y$. This leads us to the identity
\[I_1(s) = \frac{(4\pi)^{1-s} C_{k,\ell} \rho_F(1)}{\xi(1+2it)} \sum_{n=1}^\infty \frac{\lambda_F(n)\lambda(n,t)}{n^s} \int_0^\infty W_{0,it}(u)W_{k,\ell-\frac{1}{2}}(u)u^{s-1} \, \frac{\du}{u}.\]
At this point, we analytically continue this expression to $s = \frac{1}{2} - it$, as the Dirichlet series extends holomorphically to the closed half-plane $\Re(s) \geq \frac{1}{2}$ from \eqref{eqn:Dirserieslambdalambdat} (recalling that $\zeta(2s) \neq 0$ for $\Re(s) \geq \frac{1}{2}$), while the integral extends holomorphically to the open half-plane $\Re(s) > \frac{1}{2} - \ell$ by \cite[(7.621.11) and (9.237.3)]{GR15}, since these identities allow us to write the integral as a finite sum of quotients of gamma functions that have no poles for $\Re(s) > \frac{1}{2} - \ell$. Recalling the identities \eqref{eqn:Ckl} for $C_{k,\ell}$, \eqref{eqn:hol-cusp-form-norm} for $\rho_F(1)^2$, and \eqref{eqn:Dirserieslambdalambdat} for the Dirichlet series, we obtain the desired identity.
\end{proof}

\begin{theorem}
\label{thm:EisEishol}
For any shifted holomorphic or antiholomorphic Hecke cusp form $f_k$ of weight $2k$ obtained by raising or lowering a holomorphic Hecke cusp form $F$ of weight $2\ell > 0$ with $\ell < |k|$, we have that
\[\int_{\slzh}E\left(z,\frac{1}{2}+it\right)E_{-2k}\left(z,\frac{1}{2}-it\right)f_k(z) \, \dmu(z) \ll_{k,\ell,\e} |t|^{-\frac{1}{6}+\e}.\]
\end{theorem}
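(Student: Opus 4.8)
The plan is to read off the bound directly from the exact formula \eqref{eqn:EisEishol}, dealing with the antiholomorphic case by symmetry and then estimating the arithmetic and archimedean factors on its right-hand side separately. Since Lemma \ref{lem:EisEishol} only covers the holomorphic case $k > 0$, I would first reduce the antiholomorphic case to it. Writing $k = -k'$ with $k' > 0$, a comparison of the Fourier expansions \eqref{eqn:fkFW} and \eqref{eqn:f-kFW} gives $f_{-k'} = \ol{f_{k'}}$, since $\lambda_F(n)$, $C_{k',\ell}$, and $\rho_F(1)$ are all real. Conjugating the integral and using $\ol{E(z,s)} = E(z,\ol{s})$ together with $\ol{E_{2k'}(z,s)} = E_{-2k'}(z,\ol{s})$ (which follows from $\ol{j_\gamma(z)} = j_\gamma(z)^{-1}$) then shows that the antiholomorphic integral at $t$ is the complex conjugate of the holomorphic integral at $-t$. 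As the target bound is invariant under $t \mapsto -t$ and under conjugation, it suffices to treat the holomorphic case $k > 0$.

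For the arithmetic factor in \eqref{eqn:EisEishol}, the prefactors $(-1)^{k-\ell}\sqrt{\pi/2}$ and $(2\pi)^{1+2it}$ are $O_{k,\ell}(1)$ in absolute value, while $L(\half,F)$ and $L(1,\ad F)$ depend only on $F$ and hence contribute $O_\ell(1)$. Good's subconvex bound \eqref{eqn:Goodsubconvex} gives $L(\half - 2it, F) \ll_{\ell,\e} |t|^{\frac13 + \e}$, while the lower bound \eqref{eqn:zeta1bound} for $|\zeta(1 \pm 2it)|$ contributes only a factor of $(\log|t|)^{O(1)} \ll_\e |t|^\e$. Altogether the arithmetic factor is $\ll_{\ell,\e} |t|^{\frac13 + \e}$.

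The crux is to show that the archimedean factor $\frac{1}{\Gamma(\half+it)}\int_0^\infty W_{0,it}(u) W_{k,\ell-\half}(u) u^{-\frac12 - it}\,\frac{\du}{u}$ is $\ll_{k,\ell} |t|^{-\frac12}$. Because $\ell < k$, the parameters of $W_{k,\ell-\half}$ force the underlying confluent hypergeometric function to terminate, so by \cite[(9.237.3)]{GR15} one may write $W_{k,\ell-\half}(u) = e^{-u/2}\sum_{m=0}^{k-\ell} a_m u^{\ell+m}$ with $a_m = a_m(k,\ell)$; this reduces the integral to a finite combination of single-Whittaker transforms $\int_0^\infty e^{-u/2} W_{0,it}(u) u^{\ell+m-\frac32-it}\,\du$, exactly as in the proof of Lemma \ref{lem:EisEishol}. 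The key simplification is that the exponential weight is precisely $e^{-u/2}$, matching the decay rate of $W_{0,it}$; consequently the Gauss hypergeometric function appearing in the Laplace transform formula \cite[(7.621.11)]{GR15} is evaluated at the argument $0$ and equals $1$, collapsing each transform to the gamma quotient $\Gamma(\ell+m)\Gamma(\ell+m-2it)/\Gamma(\ell+m+\half-it)$.

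It then remains to bound the ratio $\frac{\Gamma(\ell+m-2it)}{\Gamma(\half+it)\,\Gamma(\ell+m+\half-it)}$ via Stirling's formula $|\Gamma(\sigma+i\tau)| \sim \sqrt{2\pi}\,|\tau|^{\sigma-\frac12} e^{-\pi|\tau|/2}$: the doubled argument in $\Gamma(\ell+m-2it)$ produces a factor $e^{-\pi|t|}$, which cancels exactly against the product $e^{-\pi|t|/2} \cdot e^{-\pi|t|/2}$ coming from the two denominator gamma factors, leaving a clean estimate of size $|t|^{-\frac12}$ per term. Multiplying the arithmetic factor $|t|^{\frac13+\e}$ by the archimedean factor $|t|^{-\frac12}$ yields $|t|^{-\frac16+\e}$, as claimed. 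The main obstacle — and the reason the bound is polynomial rather than exponentially large — is securing this exact cancellation of the exponential factors, which hinges on the coincidence of the $e^{-u/2}$ decay of the two Whittaker functions that both makes the hypergeometric argument vanish and forces the surviving factor $\Gamma(\ell+m-2it)$ to carry the doubled argument $2t$.
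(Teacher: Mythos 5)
Your proposal is correct and follows essentially the same route as the paper: conjugational symmetry to reduce to $k>0$, Good's bound \eqref{eqn:Goodsubconvex} together with \eqref{eqn:zeta1bound} for the $L$-function ratio (giving $|t|^{1/3+\e}$), and a $|t|^{-1/2}$ bound for the Whittaker integral. Your inline evaluation of that integral — terminating the Whittaker function via \cite[(9.237.3)]{GR15}, applying \cite[(7.621.11)]{GR15} at the special argument where the ${}_2F_1$ collapses, and checking via Stirling that the exponential factors cancel — is precisely the content of the paper's Corollary \ref{cor:Ikellrbound} and the lemma preceding it.
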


\begin{proof}
We consider only the positive weight case; the analogous bounds for the negative weight case follow by conjugational symmetry. We bound the expression \eqref{eqn:EisEishol}. Via \eqref{eqn:zeta1bound} and the subconvex bound \eqref{eqn:Goodsubconvex}, the ratio of $L$-functions is $O_{\ell,\e}(|t|^{1/3 + \e})$. It remains to deal with the integral of Whittaker functions. In Corollary \ref{cor:Ikellrbound}, we show that this integral is $O_{k,\ell}(|t|^{-1/2})$. This yields the desired estimate.
\end{proof}

\begin{remark}
Theorem \ref{thm:EisEishol} is unconditional due to the fact that the subconvex bound \eqref{eqn:Goodsubconvex} for $L(1/2 + it,F)$ is known unconditionally. A similar such subconvex bound is known for $L(1/2 + it,\varphi)$, where $\varphi$ is a \emph{fixed} Hecke--Maa\ss{} cusp form \cite{Meu90}; finally, an analogous subconvex bound is known for $\zeta(1/2 + it)$. These known subconvex bounds are the key inputs for Jakobson's unconditional proof of QUE for Eisenstein series \cite[Theorem 1]{Jak94}. In contrast, the subconvex bounds \eqref{eqn:subconvex1}, \eqref{eqn:subconvex2}, and \eqref{eqn:subconvex3} remain hypothetical, which is the reason that Theorem \ref{thm:mainthm} is a conditional result.
\end{remark}

\section{Eisenstein Series Computation}

We now move on to the proof of our main theorem, first proving the desired bound for Eisenstein series. We begin by relating an integral of a Hecke--Maa\ss{} cusp form, a shifted Hecke--Maa\ss{} cusp form, and a shifted Eisenstein series to the product of a ratio of $L$-functions and an integral involving Whittaker functions.

\begin{lemma}
\label{lem:EisMaassMaass}
For $k \in \Z$ and $t \in \R$, we have that
\begin{multline}
\label{eqn:EisMaassMaass}
\int_{\slzh}\varphi_j(z)\ol {\varphi_{j, k}(z)}E_{2k}\left(z,\frac{1}{2} + it\right) \, \dmu(z) = \frac{\pi}{2} (-1)^{k + \kappa_j} (4\pi)^{\frac{1}{2} - it} \frac{\zeta\left(\frac{1}{2} + it\right) L\left(\frac{1}{2} + it, \ad \varphi_j\right)}{\zeta(1 + 2it) L(1, \ad\ph_j)}	\\
\times \int_0^\infty \frac{W_{0, ir_j}(u)}{\Gamma\left(\frac{1}{2} + ir_j\right)} \left(\frac{W_{k,-ir_j}(u) }{\Gamma\left(\frac{1}{2}+ k-i r_j\right)} + \frac{W_{-k, -ir_j}(u)}{\Gamma\left(\frac{1}{2}- k-i r_j\right)} \right) u^{-\frac{1}{2} + it} \, \frac{\du}{u}.
\end{multline}
\end{lemma}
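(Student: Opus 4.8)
The plan is to mimic the unfolding computation carried out for the holomorphic case in Lemma~\ref{lem:EisEishol}, but now with the Eisenstein series $E_{2k}(z,\frac{1}{2}+it)$ playing the role of the unfolded series and with the shifted Maa\ss{} cusp form $\varphi_{j,k}$ in place of the holomorphic $f_k$. Since we cannot unfold the nonholomorphic $E_{2k}$ directly at $s=\frac{1}{2}+it$ (absolute convergence fails), I would first introduce the auxiliary integral
\[
I(s) \coloneqq \int_{\slzh}\varphi_j(z)\,\ol{\varphi_{j,k}(z)}\,E_{2k}(z,s)\,\dmu(z)
\]
for $\Re(s)>1$, where the defining sum \eqref{eqn:E2kdef} converges absolutely, and then analytically continue to $s=\frac{1}{2}+it$ at the very end. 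One subtlety relative to the holomorphic case: the product $\varphi_j\,\ol{\varphi_{j,k}}$ is not itself of a single weight, so to unfold I would use that $E_{2k}(z,s)=\sum_{\gamma\in\Gamma_\infty\backslash\slz} j_\gamma(z)^{-2k}\Im(\gamma z)^s$ and that $\varphi_j\ol{\varphi_{j,k}}$ transforms under $\slz$ with exactly the weight $-2k$ that cancels the $j_\gamma(z)^{-2k}$ factor (here $\varphi_j$ has weight $0$ and $\ol{\varphi_{j,k}}$ has weight $-2k$). This collapses the integral over $\slzh$ to one over $\Gamma_\infty\backslash\Hb$, giving
\[
I(s)=\int_{\Gamma_\infty\backslash\Hb}\varphi_j(z)\,\ol{\varphi_{j,k}(z)}\,\Im(z)^s\,\dmu(z).
\]

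Next I would take $[0,1]\times\R_+$ as a fundamental domain for $\Gamma_\infty\backslash\Hb$ and insert the Fourier--Whittaker expansions \eqref{eqn:phjFW} for $\varphi_j$ and \eqref{eqn:phjkFW} for $\varphi_{j,k}$. After interchanging summation and integration, the $x$-integral over $[0,1]$ forces the two frequencies to match (the standard orthogonality $\int_0^1 e((n-m)x)\,\dx=\delta_{n=m}$), leaving a single sum over $n\neq 0$. The arithmetic part of each term is $\sgn(n)^{2\kappa_j}\rho_j(1)^2\,\lambda_j(|n|)^2/|n|$, and summing $\lambda_j(|n|)^2/|n|^{s+1/2}$ over $n\ge 1$ (together with the negative-$n$ contribution) is exactly the Rankin--Selberg Dirichlet series \eqref{eqn:Dirserieslambda2}, which equals $\zeta(s+\frac12)L(s+\frac12,\ad\varphi_j)/\zeta(2s+1)$ up to the normalizing constant. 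The contributions from $n>0$ and $n<0$ assemble, via the $D_{k,r_j}^{\pm}$ constants of \eqref{eqn:Dkrpm}, into the two-term sum $W_{k,-ir_j}(u)/\Gamma(\frac12+k-ir_j)+W_{-k,-ir_j}(u)/\Gamma(\frac12-k-ir_j)$ appearing in the statement (note $W_{0,ir_j}=W_{0,-ir_j}$ so the parity sign $\sgn(n)^{\kappa_j}$ squares away into $(-1)^{\kappa_j}$ via the definition of $\rho_j(1)$). Substituting $u=4\pi|n|y$ converts the $y$-integral into the displayed Whittaker integral and supplies the $(4\pi)^{1/2-it}$ factor.

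Finally I would perform the analytic continuation. The Dirichlet series $\sum \lambda_j(n)^2 n^{-s-1/2}$ continues holomorphically to $\Re(s)\ge\frac12$ because $\zeta(2s+1)$ is nonvanishing there and $\zeta(s+\frac12)L(s+\frac12,\ad\varphi_j)$ is holomorphic on that half-plane (the pole of $\zeta$ at $s=\frac12$ is cancelled, or handled, by the Rankin--Selberg normalization), while the Whittaker integral $\int_0^\infty W_{0,ir_j}(u)W_{\pm k,-ir_j}(u)u^{s-1/2}\,\frac{\du}{u}$ continues by \cite[(7.621.11) and (9.237.3)]{GR15} to a region containing $s=\frac12+it$. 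Evaluating both continued pieces at $s=\frac12+it$ and collecting the normalizing constants $\rho_j(1)^2=\pi/(2\Gamma(\frac12+ir_j)\Gamma(\frac12-ir_j)L(1,\ad\varphi_j))$ from \eqref{eqn:cusp-form-norm} together with $\Gamma(\frac12+ir_j)\Gamma(\frac12-ir_j)$ absorbing into the two Whittaker denominators yields \eqref{eqn:EisMaassMaass}. The main obstacle I anticipate is bookkeeping rather than analysis: correctly tracking the weight-matching in the unfolding, the parity signs $\kappa_j$, and the precise powers of $\Gamma(\frac12\pm ir_j)$ so that the normalization constants collapse into exactly the claimed prefactor and the two-term Whittaker combination; the analytic continuation itself is routine given the cited tables and the Selberg-eigenvalue-theorem guarantee that $r_j$ is real.
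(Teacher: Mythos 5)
Your proposal follows the paper's proof essentially verbatim: the same auxiliary integral $I(s)$ for $\Re(s)>1$, the same unfolding to $\Gamma_\infty\backslash\Hb$ via the weight cancellation, insertion of the Fourier--Whittaker expansions \eqref{eqn:phjFW} and \eqref{eqn:phjkFW}, recognition of the Rankin--Selberg series \eqref{eqn:Dirserieslambda2}, and analytic continuation to $s=\tfrac12+it$. The only blemishes are bookkeeping: the Dirichlet series that actually emerges after the substitution $u=4\pi|n|y$ is $\sum_{n}\lambda_j(n)^2 n^{-s}$ (not $n^{-s-1/2}$), so its unique pole sits at $s=1$ rather than at $s=\tfrac12$ and no cancellation is needed at the evaluation point, and the holomorphic continuation of the Whittaker integral in this nonholomorphic case rests on \cite[(7.611.7)]{GR15} rather than the Laguerre-polynomial identities \cite[(7.621.11) and (9.237.3)]{GR15} that you carried over from Lemma~\ref{lem:EisEishol}.
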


\begin{proof}
We follow the same method as in Lemma \ref{lem:EisEishol}, first evaluating the integral
\[I_2(s) \coloneqq \int_{\slzh}\varphi_j(z)\ol {\varphi_{j, k}(z)}E_{2k}(z, s) \, \dmu(z)\]
for $\Re(s) > 1$, and then analytically continuing this expression to $s = \frac{1}{2} + it$. We again apply the unfolding trick by inserting the identity \eqref{eqn:E2kdef} for $E_{2k}(z,s)$, giving
\[I_2(s) =\int_{\Gamma_\infty\backslash\Hb}\varphi_j(z)\ol{\varphi_{j, k}(z)}\Im(z)^s\, \dmu(z).\]
Inserting the Fourier--Whittaker expansions \eqref{eqn:phjFW} for $\ph_j$ and \eqref{eqn:phjkFW} for $\ph_{j, k}$ and integrating over the fundamental domain $[0,1] \times \R_+$ of $\Gamma_{\infty} \backslash \Hb$, we find that $I_2(s)$ is equal to
\[(-1)^{\kappa_j} (4\pi)^{1 - s} \rho_j(1)^2 \sum_{n = 1}^{\infty} \frac{\lambda_j(n)^2}{n^s} \int_0^\infty W_{0, ir_j}(u) \left(\overline{D_{k,-r_j}^+} W_{k,-ir_j}(u) + \overline{D_{k,-r_j}^-} W_{-k,-ir_j}(u)\right) u^{s-1} \, \frac{\du}{u}.\]
We then analytically continue this to $s = \frac{1}{2} + it$, as the Dirichlet series extends meromorphically to the closed half-plane $\Re(s) \geq \frac{1}{2}$ with only a simple pole at $s = 1$ from \eqref{eqn:Dirserieslambda2} (recalling that $\zeta(2s) \neq 0$ for $\Re(s) \geq \frac{1}{2}$), while the integral extends holomorphically to the open half-plane $\Re(s) > 0$ by \cite[(7.611.7)]{GR15}. Recalling the identities \eqref{eqn:Dkrpm} for $D_{k, -r_j}^{\pm}$ (and noting that $\overline{\Gamma(z)} = \Gamma(\overline{z})$), \eqref{eqn:cusp-form-norm} for $\rho_j(1)^2$, and \eqref{eqn:Dirserieslambda2} for the Dirichlet series, we obtain the desired identity.
\end{proof}

\begin{theorem}
\label{thm:MaassMaassEis}
For any $\delta > 0$ and $A > 0$, given a subconvex bound of the form
\begin{equation}
\label{eqn:subconvexEis}
L\left(\frac{1}{2}+it,\ad \phi\right) \ll r^{\frac{1}{2} - 2\delta} (1+|t|)^A,
\end{equation}
where $\phi$ is an arbitrary Hecke--Maa{\ss} cusp form with spectral parameter $r$, we have that
\begin{multline*}
\int_{\slzh}\varphi_j(z)\ol {\varphi_{j, k}(z)}E_{2k}\left(z,\frac{1}{2} + it\right) \, \dmu(z)	\\
\ll_{k,\e} r_j^{\frac{1}{2} - 2\delta} \log r_j \: (1 + |t|)^{A - \frac{1}{4} + \e} (2r_j + |t|)^{-\frac{1}{4}} (1 + |2r_j - |t||)^{-\frac{1}{4}}.
\end{multline*}
\end{theorem}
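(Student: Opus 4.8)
The plan is to start from the exact identity established in Lemma \ref{lem:EisMaassMaass}, bound each explicit arithmetic factor on the right-hand side of \eqref{eqn:EisMaassMaass} using the estimates collected in Section \ref{sec:Lfunctions} together with the hypothesis \eqref{eqn:subconvexEis}, and thereby reduce the entire problem to a single estimate for the archimedean integral of Whittaker functions. First I would dispense with the elementary and arithmetic factors. The power $(4\pi)^{1/2 - it}$ has modulus $(4\pi)^{1/2} = O(1)$. The convexity bound \eqref{eqn:zetaconvexity} gives $\zeta\left(\frac{1}{2} + it\right) \ll_\e (1 + |t|)^{1/4 + \e}$, while the hypothesis \eqref{eqn:subconvexEis} gives $L\left(\frac{1}{2} + it, \ad \varphi_j\right) \ll r_j^{1/2 - 2\delta}(1 + |t|)^A$. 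For the denominators, the lower bound in \eqref{eqn:zeta1bound} yields $|\zeta(1 + 2it)|^{-1} \ll_\e (1 + |t|)^{\e}$, and the lower bound in \eqref{eqn:L1adboundsMaass} yields $L(1, \ad \varphi_j)^{-1} \ll \log r_j$; the latter is precisely the source of the factor $\log r_j$ in the statement. Multiplying these together shows that the arithmetic prefactor is $\ll_\e r_j^{1/2 - 2\delta} \log r_j \, (1 + |t|)^{A + 1/4 + \e}$.

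It then remains to show that the archimedean integral
\[
\int_0^\infty \frac{W_{0, ir_j}(u)}{\Gamma\left(\frac{1}{2} + ir_j\right)} \left(\frac{W_{k,-ir_j}(u)}{\Gamma\left(\frac{1}{2} + k - ir_j\right)} + \frac{W_{-k, -ir_j}(u)}{\Gamma\left(\frac{1}{2} - k - ir_j\right)}\right) u^{-\frac{1}{2} + it} \, \frac{\du}{u}
\]
is $\ll_{k, \e} (1 + |t|)^{-1/2 + \e}(2r_j + |t|)^{-1/4}(1 + |2r_j - |t||)^{-1/4}$, since combining this with the prefactor bound collapses the exponent $A + 1/4$ to $A - 1/4$ and reproduces \eqref{eqn:EisMaassMaass}. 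To establish this, I would evaluate the Mellin transform of the product of two Whittaker functions in closed form: as in the analytic continuation step of Lemma \ref{lem:EisMaassMaass} via \cite[(7.611.7)]{GR15}, each such integral can be written as a ratio of products of gamma functions multiplied by a ${}_3F_2$ hypergeometric function evaluated at $1$. The arguments of the gamma functions are linear combinations of $\frac{1}{2}$, $ir_j$, and $it$, so the decay in $t$ and the transition behavior near $|t| \approx 2r_j$ are dictated by the ratios of gamma factors, which I would estimate by Stirling's formula; the ${}_3F_2$ factor is bounded separately after the normalizing gamma factors are folded in.

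The main obstacle is this last step: controlling the archimedean integral uniformly in both $r_j$ and $t$, and in particular capturing the sharp transition reflected in the two competing factors $(2r_j + |t|)^{-1/4}$ and $(1 + |2r_j - |t||)^{-1/4}$. This resonance arises because the product $W_{0, ir_j}(u) W_{\pm k, -ir_j}(u)$ carries oscillatory components of frequency $0$ and $\pm 2r_j$, which interfere constructively with the factor $u^{it}$ precisely when $|t|$ is close to $2r_j$. Tracking the interplay between the ${}_3F_2$ and the gamma ratios across this range — distinguishing the oscillatory regime $|t| < 2r_j$ from the essentially exponentially decaying regime $|t| > 2r_j$ — is the delicate part, and the normalizing factors $\Gamma\left(\frac{1}{2} + ir_j\right)^{-1}$ and $\Gamma\left(\frac{1}{2} \pm k - ir_j\right)^{-1}$ must be combined carefully with the closed-form evaluation so that no spurious growth in $r_j$ is introduced beyond what the subconvex input \eqref{eqn:subconvexEis} already supplies.
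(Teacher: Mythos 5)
Your treatment of the arithmetic factor is exactly the paper's: convexity for $\zeta(\tfrac12+it)$, the hypothesis \eqref{eqn:subconvexEis}, the lower bound \eqref{eqn:zeta1bound} for $\zeta(1+2it)$, and the lower bound in \eqref{eqn:L1adboundsMaass} (the source of $\log r_j$) combine to give the prefactor $r_j^{1/2-2\delta}\log r_j\,(1+|t|)^{A+1/4+\e}$, and the reduction to the archimedean integral via Lemma \ref{lem:EisMaassMaass} is the same. The gap is in the one step you defer. You propose to evaluate each Mellin transform $\int_0^\infty W_{0,ir_j}(u)W_{\pm k,-ir_j}(u)u^{s-1}\,\frac{\du}{u}$ separately via \cite[(7.611.7)]{GR15} as gamma factors times a ${}_3F_2$ at $1$, and then to bound "the ${}_3F_2$ factor separately." For generic parameters these individual ${}_3F_2$'s do \emph{not} terminate, and bounding a non-terminating ${}_3F_2(1)$ whose parameters have imaginary parts of size $r_j$ and $t$, uniformly in both and through the transition range $|t|\approx 2r_j$, is itself a hard problem --- essentially as hard as the original integral. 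Nothing in your sketch supplies that bound, and naive term-by-term estimation of the series loses the decay you need.

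The paper sidesteps this entirely by exploiting the specific $\pm k$ combination appearing in \eqref{eqn:EisMaassMaass}: by Jakobson's identity \eqref{eqn:hypergeom}, the \emph{sum} of the two terms equals an explicit product of gamma functions times a ${}_4F_3$ with numerator parameters $-k$ and $k$, which therefore \emph{terminates} after $|k|+1$ terms. The gamma-factor ratio is then estimated by Stirling in Lemma \ref{lem:gammaStirling} --- this is where the factors $(1+|t|)^{-1/2}(1+|2r_j+t|)^{-1/4}(1+|2r_j-t|)^{-1/4}$ and the exponential decay for $|t|>2r_j$ come from --- while the terminating hypergeometric sum is bounded termwise in Lemma \ref{lem:4F3bound} by $O_k(1+((1+|2r_j-t|)/(1+r_j))^{|k|})$, which is harmless in the relevant range; Corollary \ref{cor:Ikrrtbound} then gives exactly the integral bound you posited (without even the extra $\e$). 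So your heuristic about where the transition at $|t|\approx 2r_j$ comes from is correct, but the key structural idea that makes the estimate provable --- combining the two shifted Whittaker terms so that the hypergeometric series collapses to a polynomial of degree $|k|$ --- is missing from your argument, and without it the route you describe does not close.
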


\begin{proof}
We consider only the positive weight case; the analogous bounds for the negative weight case follow by conjugational symmetry. We bound the expression \eqref{eqn:EisMaassMaass}. Via the assumption of the subconvex bound \eqref{eqn:subconvexEis}, the bounds \eqref{eqn:L1adboundsMaass} and \eqref{eqn:zeta1bound}, and the convexity bound \eqref{eqn:zetaconvexity}, the ratio of $L$-functions in \eqref{eqn:EisMaassMaass} is $O_{\e}(r_j^{1/2 - 2\delta} (\log r_j) (1 + |t|)^{A + 1/4 + \e})$. It remains to deal with the integral of Whittaker functions. In Corollary \ref{cor:Ikellrbound}, we show that this integral is $O_k((1 + |t|)^{-1/2} (2r_j + |t|)^{-1/4} (1 + |2r_j - |t||)^{-1/4})$. This yields the desired estimate.
\end{proof}

\section{The Watson--Ichino Triple Product Formula}

The remaining integrals we wish to compute are of the form
\[\int_{\slzh} \phi_1(z) \phi_2(z) \phi_3(z) \, \dmu(z)\]
where $\phi_i$ are (shifted Maa\ss{}, holomorphic, or antiholomorphic) Hecke cusp forms of weight $2k_i$ for which $k_1 + k_2 + k_3 = 0$. We will compute these via the \emph{Watson--Ichino triple product formula}, which allows us to express these in terms of products of $L$-functions and integrals of Whittaker functions.

The formula given by Ichino \cite[Theorem 1]{Ich08} is extremely general and simplifies greatly when applied to the special case of cusp forms on the modular surface. We follow the simplification of the general formula done in \cite[Appendix]{SZ19}.

Let $\tilda{\phi_i}$ denote the ad\`{e}lic lift of $\phi_i$ to a function on $\mathrm{Z}(\A_\Q) \GL_2(\Q)\backslash \GL_2(\A_\Q)$, as described in \cite[Section 4.3]{HN22} (see also \cite[Section 4.12]{GH11}). We have that
\begin{align}
\notag
\int_{\slzh} \phi_1(z) \phi_2(z) \phi_3(z) \, \dmu(z) & = \int_{\slzr} \phi_1(z) e^{2k_1i\theta} \phi_2(z) e^{2k_2i\theta} \phi_3(z) e^{2k_3i\theta} \, \domega(z,\theta) \\
\label{eqn:intclassicaltoadelic}
& = \frac{\pi}{6} \int_{ \mathrm{Z}(\A_\Q) \GL_2(\Q)\backslash \GL_2(\A_\Q)} \tilda{\phi_1}(g) \tilda{\phi_2}(g) \tilda{\phi_3}(g) \, \dg.
\end{align}
Here $\dg$ denotes the Tamagawa measure on $\mathrm{Z}(\A_\Q) \GL_2(\Q)\backslash \GL_2(\A_\Q)$, which is normalized such that this quotient space has volume $2$. The factor $\frac{\pi}{6}$ occurs on the right-hand side of \eqref{eqn:intclassicaltoadelic} to ensure that the relevant measures are normalized consistently, which can be checked by replacing the integrands with the constant function $1$.

The Watson--Ichino triple product formula relates the integral \eqref{eqn:intclassicaltoadelic} to $L$-functions and to an integral of matrix coefficients of the local representations of $\GL_2(\R)$ associated to $\tilde{\phi_1},\tilde{\phi_2},\tilde{\phi_3}$. So long as one of $\phi_1,\phi_2,\phi_3$ is a shifted Hecke--Maa\ss{} cusp form, this integral of matrix coefficients can in turned be expressed as in terms of an integral of \emph{local Whittaker functions} and an element of the \emph{induced model}, which we describe below. The reduction to an integral of this form is a local analogue of the unfolding method used in Lemmas \ref{lem:EisEishol} and \ref{lem:EisMaassMaass} and leads to integrals of Whittaker functions of the same form as those appearing in \eqref{eqn:EisEishol} and \eqref{eqn:EisMaassMaass}.

\subsection{The Whittaker Model}

Associated to a shifted Maa\ss{}, holomorphic, or antiholomorphic Hecke cusp form $\phi$ of weight $2k$ is a weight $2k$ local Whittaker function $W_{\phi} : \GL_2(\R) \to \C$. This function satisfies
\begin{equation}
\label{eqn:Whittakertransform}
W_{\phi}\left(\begin{pmatrix}
1 & x\\
0 & 1
\end{pmatrix}
\begin{pmatrix}
y & 0\\
0 & 1
\end{pmatrix} \begin{pmatrix} z & 0 \\ 0 & z \end{pmatrix} 
\begin{pmatrix}
\cos\theta & \sin\theta\\
-\sin\theta & \cos\theta
\end{pmatrix}\right) = e(x) e^{2k i\theta} W_{\phi}\begin{pmatrix}
y & 0\\
0 & 1
\end{pmatrix}
\end{equation}
for all $x \in \R$, $y,z \in \R^{\times}$, and $\theta \in \R$; additionally, letting $\lambda_{\phi}(n)$ denote the $n$-th Hecke eigenvalue of $\phi$, we have that for $x \in \R$ and $y \in \R_+$,
\begin{equation}
\label{eqn:adelicliftWhittaker}
\phi(x + iy) = \sum_{\substack{n = -\infty \\ n \neq 0}}^{\infty} \frac{\lambda_{\phi}(|n|)}{\sqrt{|n|}} W_{\phi}\begin{pmatrix} ny & 0 \\ 0 & 1 \end{pmatrix} e(nx)
\end{equation}
(cf.\ \cite[Section 4.3.3]{HN22}). By \eqref{eqn:phjkFW}, \eqref{eqn:fkFW}, and \eqref{eqn:f-kFW}, this means that $W_{\phi}\begin{psmallmatrix} y & 0 \\ 0 & 1 \end{psmallmatrix}$ can be expressed in terms of a constant multiple of a classical Whittaker function $W_{\alpha,\beta}$.

This local Whittaker function $W_{\phi}$ is an element of the \emph{Whittaker model} $\WW(\pi_{\infty})$ associated to $\phi$. As explained in \cite[Section 4.8]{GH11}, associated to $\phi$ is an ad\`{e}lic automorphic form $\widetilde{\phi} : \GL_2(\A_{\Q}) \to \C$. In turn, such an ad\`{e}lic automorphic form is associated to a cuspidal automorphic representation $\pi$ of $\GL_2(\A_{\Q})$, as discussed in \cite[Section 5.4]{GH11}. From \cite[Section 10.4]{GH11}, this automorphic representation is isomorphic to a restricted tensor product of local representations: we have that $\pi \cong \pi_{\infty} \otimes \bigotimes_p' \pi_p$, where $\pi_{\infty}$ is an irreducible representation of $\GL_2(\R)$, while $\pi_p$ is an irreducible representation of $\GL_2(\Q_p)$ for each prime $p$.

The irreducible representation $\pi_{\infty}$ of $\GL_2(\R)$ is completed determined by $\phi$ as follows.
\begin{itemize}
\item If $\phi$ is a shifted Hecke--Maa\ss{} cusp form $\varphi_{j,k}$, then $\pi_{\infty}$ is a principal series representation. A model for this representation is the Whittaker model $\WW(\pi_{\infty})$, which consists of certain local Whittaker functions $W : \GL_2(\R) \to \C$, with the irreducible representation $\pi_{\infty}$ given via the action $(\pi_{\infty}(h) \cdot W)(g) \coloneqq W(gh)$ of $h \in \GL_2(\R)$. Letting $r_j \in \R$ denote the spectral parameter of $\phi$ and $\epsilon_j = (-1)^{\kappa_j}$ denote the parity of $\phi$, where $\kappa_j \in \{0,1\}$, the Whittaker model $\WW(\pi_{\infty})$ is the vector space of local Whittaker functions $W : \GL_2(\R) \to \C$ of the form
\begin{equation}
\label{eqn:WhittakerSchwartzprincipal}
W(g) \coloneqq \sgn(\det g)^{\kappa_j} \left|\det g\right|^{\frac{1}{2} + ir_j} \int_{\R^{\times}} |a|^{-2ir_j} \int_{\R} \Phi((a^{-1},x) g) e(-ax) \, \dx \, \mathrm{d}^{\times}a
\end{equation}
with $\Phi : \R^2 \to \C$ a Schwartz function \cite[Lemma 2.5.13.1]{JL70}.

By taking the Schwartz function to be
\begin{equation}
\label{eqn:Phikprincipal}
\Phi(x_1,x_2) \coloneqq \pi^{|k|} \frac{\Gamma\left(\frac{1}{2} + ir_j\right)}{\Gamma\left(\frac{1}{2} + |k| + ir_j\right)} \rho_j(1) (x_2 - \sgn(k)ix_1)^{2|k|} e^{-\pi(x_1^2 + x_2^2)},
\end{equation}
the resulting local Whittaker function given by \eqref{eqn:WhittakerSchwartzprincipal} is $W_{\phi}$; it satisfies \eqref{eqn:Whittakertransform} and is such that
\begin{equation}
\label{eqn:Wphidiagprincipal}
W_{\phi}\begin{pmatrix} y & 0 \\ 0 & 1 \end{pmatrix} = D_{k,r_j}^{\sgn(y)} \sgn(y)^{\kappa_j} \rho_j(1) W_{\sgn(y)k,ir_j}(4\pi|y|).
\end{equation}
This can be seen directly by taking $g = \begin{psmallmatrix} y & 0 \\ 0 & 1 \end{psmallmatrix}$ in \eqref{eqn:WhittakerSchwartzprincipal} and making the change of variables $x \mapsto a^{-1} xy$ and $a \mapsto \pi^{1/2} |a|^{-1/2} (x^2 + 1)^{1/2} |y|$, which shows that
\begin{multline*}
W_{\phi}\begin{pmatrix} y & 0 \\ 0 & 1 \end{pmatrix} = \pi^{-\frac{1}{2} - ir_j} \frac{(-1)^k \Gamma\left(\frac{1}{2} + ir_j\right)}{\Gamma\left(\frac{1}{2} + |k| + ir_j\right)} \sgn(y)^{\kappa_j} \rho_j(1) |y|^{\frac{1}{2} - ir_j}	\\
\times \int_{0}^{\infty} a^{\frac{1}{2} + |k| + ir_j} e^{-a} \, \frac{\mathrm{d}a}{a} \int_{\R} (1 + ix)^{-\frac{1}{2} + k - ir_j} (1 - ix)^{-\frac{1}{2} - k - ir_j} e(-xy) \, \dx.
\end{multline*}
The integral over $\R_+ \ni a$ is $\Gamma(\frac{1}{2} + |k| + ir_j)$, while from \cite[3.384.9]{GR15},
\[\int_{\R} (1 + ix)^{-\frac{1}{2} + k - ir_j} (1 - ix)^{-\frac{1}{2} - k - ir_j} e(-xy) \, \dx = \frac{\pi^{\frac{1}{2} + ir_j} |y|^{-\frac{1}{2} + ir_j}}{\Gamma\left(\frac{1}{2} + \sgn(y)k + ir_j\right)} W_{\sgn(y)k,ir_j}(4\pi|y|).\]
By the definition \eqref{eqn:Dkrpm} of $D_{k,r}^{\pm}$, this yields \eqref{eqn:Wphidiagprincipal}.
\item If $\phi$ is a shifted holomorphic or antiholomorphic Hecke cusp form $f_k$, then $\pi_{\infty}$ is a discrete series representation. A model for this representation is the Whittaker model $\WW(\pi_{\infty})$, which consists of certain local Whittaker functions $W : \GL_2(\R) \to \C$, with the irreducible representation $\pi_{\infty}$ given via the action $(\pi_{\infty}(h) \cdot W)(g) \coloneqq W(gh)$ of $h \in \GL_2(\R)$. Letting $2\ell \in 2\N$ denote the weight of the underlying holomorphic Hecke cusp form $F$, the Whittaker model $\WW(\pi_{\infty})$ of $\pi_{\infty}$ is the vector space of local Whittaker functions $W : \GL_2(\R) \to \C$ of the form
\begin{equation}
\label{eqn:WhittakerSchwartzdiscrete}
W(g) \coloneqq \left|\det g\right|^{\ell} \int_{\R^{\times}} |y|^{1 - 2\ell} \int_{\R} \Phi((y^{-1},x) g) e(-xy) \, \dx \, \mathrm{d}^{\times}y
\end{equation}
with $\Phi : \R^2 \to \C$ a Schwartz function satisfying
\[\int_{\R} x_1^m \left(\left. \frac{\partial^{2\ell - m - 2}}{\partial x_2^{2\ell - m - 2}} \right|_{x_2 = 0} \int_{\R} \Phi(x_1,\xi_2) e(-x_2 \xi_2) \, \mathrm{d}\xi_2\right) \, \mathrm{d}x_1 = 0 \quad \text{for all $m \in \{0,\ldots,2\ell - 2\}$}\]
\cite[Corollary 2.5.14]{JL70}.

By taking
\[\Phi(x_1,x_2) \coloneqq \pi^{|k|} (-1)^k C_{|k|,\ell} \rho_F(1) (x_2 - \sgn(k)ix_1)^{2|k|} e^{-\pi(x_1^2 + x_2^2)},\]
the resulting local Whittaker function given by \eqref{eqn:WhittakerSchwartzdiscrete} is $W_{\phi}$; it satisfies \eqref{eqn:Whittakertransform} and is such that
\begin{equation}
\label{eqn:Wphidiagdiscrete}
W_{\phi}\begin{pmatrix} y & 0 \\ 0 & 1 \end{pmatrix} = \begin{dcases*}
C_{|k|,\ell} \rho_F(1) W_{|k|,\ell - \frac{1}{2}}(4\pi|y|) & if $\sgn(y) = \sgn(k)$,	\\
0 & if $\sgn(y) = -\sgn(k)$.
\end{dcases*}
\end{equation}
This can again be seen directly by taking $g = \begin{psmallmatrix} y & 0 \\ 0 & 1 \end{psmallmatrix}$ in \eqref{eqn:WhittakerSchwartzdiscrete} and making the change of variables $x \mapsto a^{-1} xy$ and $a \mapsto \pi^{1/2} |a|^{-1/2} (x^2 + 1)^{1/2} |y|$, which shows that
\[W_{\phi}\begin{pmatrix} y & 0 \\ 0 & 1 \end{pmatrix} = \pi^{-\ell} C_{|k|,\ell} \rho_F(1) |y|^{1 - \ell} \int_{0}^{\infty} a^{|k| + \ell} e^{-a} \, \frac{\mathrm{d}a}{a} \int_{\R} (1 + ix)^{-\ell + k} (1 - ix)^{-\ell - k} e(-xy) \, \dx.\]
The integral over $\R_+ \ni a$ is $\Gamma(|k| + \ell)$, while from \cite[3.384.9]{GR15},
\[\int_{\R} (1 + ix)^{-\ell + k} (1 - ix)^{-\ell - k} e(-xy) \, \dx = \begin{dcases*}
\frac{\pi^{\ell} |y|^{\ell - 1}}{\Gamma(|k| + \ell)} W_{|k|,\ell - \frac{1}{2}}(4\pi|y|) & if $\sgn(y) = \sgn(k)$,	\\
0 & if $\sgn(y) = -\sgn(k)$.
\end{dcases*}\]
This yields \eqref{eqn:Wphidiagdiscrete}.
\end{itemize}

\subsection{The Induced Model}

When $\phi$ is a shifted Hecke--Maa\ss{} cusp form $\varphi_{j,k}$, so that the associated irreducible representation $\pi_{\infty}$ of $\GL_2(\R)$ is a principal series representation, there is another natural model for $\pi_{\infty}$ other than the Whittaker model $\WW(\pi_{\infty})$. This is the \emph{induced model} of $\pi_{\infty}$, which consists of smooth functions $f : \GL_2(\R) \to \C$ that satisfy
\[f\left(\begin{pmatrix}
1 & x\\
0 & 1
\end{pmatrix}
\begin{pmatrix}
y & 0\\
0 & 1
\end{pmatrix} \begin{pmatrix} z & 0 \\ 0 & z \end{pmatrix} g\right) = \sgn(y)^{\kappa_j} |y|^{\frac{1}{2} + ir_j} f(g)\]
for all $x \in \R$, $y,z \in \R^{\times}$, and $g \in \GL_2(\R)$. Equivalently, the induced model consists of functions of the form
\begin{equation}
\label{eqn:finduced}
f(g) = \sgn(\det g)^{\kappa_j} \left|\det g\right|^{\frac{1}{2} + ir_j} \int_{\R^{\times}} \Phi((0,a)g) |a|^{1 + 2ir_j} \, \mathrm{d}^{\times}a
\end{equation}
with $\Phi : \R^2 \to \C$ a Schwartz function. There is a bijection between the induced model and the Whittaker model via the map $f \mapsto W$ given by
\[W(g) = \lim_{N \to \infty} \int_{-N}^{N} f\left(\begin{pmatrix} 0 & -1\\ 1 & 0 \end{pmatrix} \begin{pmatrix} 1 & x \\ 0 & 1 \end{pmatrix} g\right) e(-x) \, \dx\]
\cite[Lemma 2.5.13.1]{JL70}.

Taking $\Phi$ as in \eqref{eqn:Phikprincipal}, we see that the element of the induced model $f_{\phi}$ associated to $\phi$ is such that
\begin{equation}
\label{eqn:fphi}
f_{\phi}\left(\begin{pmatrix}
1 & x \\
0 & 1
\end{pmatrix}
\begin{pmatrix}
y & 0 \\
0 & 1
\end{pmatrix} \begin{pmatrix} z & 0 \\ 0 & z \end{pmatrix} 
\begin{pmatrix}
\cos\theta & \sin\theta \\
-\sin\theta & \cos\theta
\end{pmatrix}\right) = \sgn(y)^{\kappa_j} |y|^{\frac{1}{2} + ir_j} e^{2ki \theta} f_{\phi}\begin{pmatrix} 1 & 0 \\ 0 & 1 \end{pmatrix}
\end{equation}
for all $x \in \R$, $y,z \in \R^{\times}$, and $\theta \in \R$. From \eqref{eqn:finduced} together with the change of variables $a \mapsto \pi^{-1/2} |a|^{1/2}$, we have that
\begin{equation}
\label{eqn:fphi2}
f_{\phi}\begin{pmatrix} 1 & 0 \\ 0 & 1 \end{pmatrix} = \pi^{-\frac{1}{2} - ir_j} \Gamma\left(\frac{1}{2} + ir_j\right) \rho_j(1).
\end{equation}

\subsection{The Watson--Ichino Triple Product Formula}

We now state an explicit form of the Watson--Ichino triple product formula, which relates the integral \eqref{eqn:intclassicaltoadelic} to a triple product $L$-function and the square of the absolute value of certain local Whittaker functions and elements of the induced model.

\begin{lemma}[Watson--Ichino triple product formula]
Let $\phi_i$ be Hecke cusp forms of weight $2k_i$ for which $k_1 + k_2 + k_3 = 0$ and such that $\phi_3$ is a shifted Hecke--Maa\ss{} cusp form. Let $W_1$ and $W_2$ denote the local Whittaker functions associated to $\phi_1$ and $\phi_2$ respectively, as in \eqref{eqn:Wphidiagprincipal} and \eqref{eqn:Wphidiagdiscrete}, and let $f_3$ denote the element of the induced model associated to $\phi_3$, as in \eqref{eqn:fphi}. We have that
\begin{multline}
\label{eqn:tripleprod}
\left| \int_{ \mathrm{Z}(\A_\Q) \GL_2(\Q)\backslash \GL_2(\A_\Q)} \tilda{\phi_1}(g) \tilda{\phi_2}(g) \tilda{\phi_3}(g) \, \dg \right|^2	\\
= \frac{36}{\pi^2} L\left(\frac{1}{2},\phi_1 \otimes \phi_2 \otimes \phi_3 \right) \left| \int_{\R^\times} W_1\begin{pmatrix} y & 0 \\ 0 & 1 \end{pmatrix} W_2\begin{pmatrix} y & 0 \\ 0 & 1 \end{pmatrix} f_3\begin{pmatrix} y & 0 \\ 0 & 1 \end{pmatrix} |y|^{-1} \, \mathrm{d}^\times y\right|^2.
\end{multline}
\end{lemma}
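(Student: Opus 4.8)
The plan is to specialize Ichino's triple product formula \cite[Theorem 1]{Ich08}, in the normalized shape recorded in \cite[Appendix]{SZ19}, to our three representations. Ichino's formula expresses the square of the global period $\int \tilda{\phi_1}\tilda{\phi_2}\tilda{\phi_3}\,\dg$ as an explicit constant times the ratio of completed central values $\Lambda(\half,\phi_1\otimes\phi_2\otimes\phi_3)/\prod_{i=1}^3 \Lambda(1,\ad\phi_i)$, multiplied by a product over all places $v$ of normalized local integrals $I_v^\ast$ of matrix coefficients. Since each $\phi_i$ arises from a cuspidal automorphic representation $\pi_i$ of $\mathrm{PGL}_2(\A_\Q)$ that is unramified at every finite prime, and the adelic lifts $\tilda{\phi_i}$ are assembled from the spherical vectors there, I would first invoke Ichino's unramified local computation to conclude $I_p^\ast = 1$ for every prime $p$. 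This collapses the product over places to the single archimedean factor $I_\infty^\ast$.

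Next I would separate the archimedean gamma factors from the finite parts of the $L$-functions. By construction, the normalized local integral $I_\infty^\ast$ differs from the bare archimedean integral $\int_{\mathrm{PGL}_2(\R)} \prod_{i=1}^3 \langle \pi_{i,\infty}(g) W_i, W_i\rangle\,\dg$, divided by the local norms $\langle W_i,W_i\rangle$, precisely by the ratio $\prod_{i=1}^3 L_\infty(1,\ad\phi_i)/L_\infty(\half,\phi_1\otimes\phi_2\otimes\phi_3)$ of archimedean $L$-factors (together with $\zeta_\infty(2)$-type constants). Multiplying this against the completed ratio $\Lambda(\half,\phi_1\otimes\phi_2\otimes\phi_3)/\prod_i\Lambda(1,\ad\phi_i)$, the archimedean $L$-factors cancel in pairs, leaving the finite values $L(\half,\phi_1\otimes\phi_2\otimes\phi_3)/\prod_{i=1}^3 L(1,\ad\phi_i)$ multiplied by the bare archimedean matrix coefficient integral.

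The crux is to evaluate this archimedean matrix coefficient integral explicitly. Because $\phi_3$ is a shifted Hecke--Maa{\ss} cusp form, the local representation $\pi_{3,\infty}$ is a principal series and so admits the induced model described above; realizing $\phi_1$ and $\phi_2$ in their Whittaker models and $\phi_3$ in its induced model, I would apply the lemma of Michel and Venkatesh \cite[Lemma 3.4.2]{MV10} (cf.\ \cite[Lemma 5]{SZ19}). This lemma identifies the group integral $\int_{\mathrm{PGL}_2(\R)} \prod_i \langle \pi_{i,\infty}(g)W_i,W_i\rangle\,\dg$ with the square of the absolute value of the trilinear torus period $\int_{\R^\times} W_1\begin{psmallmatrix} y & 0 \\ 0 & 1\end{psmallmatrix} W_2\begin{psmallmatrix} y & 0 \\ 0 & 1\end{psmallmatrix} f_3\begin{psmallmatrix} y & 0 \\ 0 & 1\end{psmallmatrix} |y|^{-1}\,\mathrm{d}^\times y$, up to an explicit constant and the same local norm and $L$-factor data. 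This is the local counterpart of the unfolding carried out in Lemmas \ref{lem:EisEishol} and \ref{lem:EisMaassMaass}, and it is responsible for the appearance of a single (rather than doubled) integral on the right-hand side of \eqref{eqn:tripleprod}.

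Finally I would reconcile all constants. The denominator $\prod_{i=1}^3 L(1,\ad\phi_i)$ surviving on the left reappears on the right through the normalization constants $\rho_j(1)^2$ and $\rho_F(1)^2$ that are built into $W_1,W_2,f_3$ via \eqref{eqn:cusp-form-norm} and \eqref{eqn:hol-cusp-form-norm}; indeed these arithmetic normalizations are exactly what the decomposition of the $L^2$-normalized global forms into local data produces, so the two occurrences agree and only the finite central value $L(\half,\phi_1\otimes\phi_2\otimes\phi_3)$ remains to be matched. I expect the main obstacle to be verifying the precise leading constant $36/\pi^2$: it must be assembled from Ichino's explicit constant, the factor $\tfrac{\pi}{6}$ relating the classical and adelic integrals in \eqref{eqn:intclassicaltoadelic}, the Tamagawa normalization (total mass $2$) of the quotient, the $\zeta_\infty(2)$- and $\zeta_p(2)$-type factors in Ichino's normalized local integrals, and the archimedean gamma factors produced by the Michel--Venkatesh identification. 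Reconciling these conventions is delicate, and I would cross-check the outcome against the trivial-function case that was used to fix $\tfrac{\pi}{6}$ in \eqref{eqn:intclassicaltoadelic}.
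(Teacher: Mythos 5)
Your proposal is correct and follows essentially the same route as the paper, which simply cites the combination of Ichino's formula \cite[Theorem 1.1]{Ich08}, the Michel--Venkatesh/Sarnak--Zhao lemma \cite[Lemma 5]{SZ19} (cf.\ \cite[Lemma 3.4.2]{MV10}), and \cite[Proposition 6]{Wal85}. The one ingredient you describe only implicitly (in your final paragraph on reconciling the $\rho_j(1)^2$ and $\rho_F(1)^2$ normalizations against $\prod_i L(1,\ad\phi_i)$) is Waldspurger's computation of the local Whittaker norms, which the paper invokes explicitly via \cite[Proposition 6]{Wal85}.
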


\begin{proof}
This follows by combining the Watson--Ichino triple product formula in the form given in \cite[Theorem 1.1]{Ich08} (cf.\ \cite[Theorem 3]{Wat08}) together with the identities \cite[Lemma 5]{SZ19} (cf.\ \cite[Lemma 3.4.2]{MV10}) and \cite[Proposition 6]{Wal85}.
\end{proof}

\begin{remark}
In place of the square of the absolute value of the integral on the right-hand side of \eqref{eqn:tripleprod}, the Watson--Ichino triple product formula given in \cite[Theorem 1.1]{Ich08} instead involves an integral of matrix coefficients. The utility of the induced model is that this integral of matrix coefficients may be expressed in terms of the simpler expression given in \eqref{eqn:tripleprod} \cite[Lemma 5]{SZ19}. In turn, we shall shortly show that for our applications, this simpler expression can be explicitly evaluated in exactly the same way as in the Eisenstein setting in Lemmata \ref{lem:EisEishol} and \ref{lem:EisMaassMaass}. Thus utilizing the induced model allows us to express this integral in a way that is an exact analogue of the Eisenstein integrals in \eqref{eqn:EisEishol} and \eqref{eqn:EisMaassMaass}.
\end{remark}

\section{Maa\texorpdfstring{\ss}{ß} Cusp Form Computation}

We use the Watson--Ichino triple product formula to complete the next step of our main theorem, namely proving the desired bound for Hecke--Maa\ss{} cusp forms. The Watson--Ichino triple product formula allows us to relate an integral of a Hecke--Maa\ss{} cusp form and two shifted Hecke--Maa\ss{} cusp forms to the product of a ratio of $L$-functions and an integral involving Whittaker functions.

\begin{lemma}
For any shifted Hecke--Maa{\ss} cusp form $\varphi_{\ell,k}$ of weight $2k \geq 0$ arising from a Hecke--Maa\ss{} cusp form $\varphi_{\ell}$ of weight $0$ and spectral parameter $r_{\ell}$, we have that
\begin{multline}
\label{eqn:MaassMaassMaass}
\left| \int_{\slzh} \ph_j(z) \ol{\ph_{j,k}(z)} \ph_{\ell,k}(z) \, \dmu(z)\right|^2 = \frac{\pi^3}{2} \frac{L\left(\frac{1}{2},\ph_{\ell}\right) L\left(\frac{1}{2}, \ad \ph_j \otimes \ph_{\ell}\right)}{L(1,\ad \ph_{\ell}) L(1,\ad \ph_j)^2}	\\
\times \left|\int_0^\infty \frac{W_{0, ir_j}(u)}{\Gamma\left(\frac{1}{2} + ir_j\right)} \left(\frac{W_{k,-ir_j}(u) }{\Gamma\left(\frac{1}{2}+ k-i r_j\right)} + \frac{W_{-k, -ir_j}(u)}{\Gamma\left(\frac{1}{2}- k-i r_j\right)} \right) u^{-\frac{1}{2} + ir_{\ell}} \, \frac{\du}{u}\right|^2.
\end{multline} 
\end{lemma}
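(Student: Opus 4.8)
The plan is to apply the Watson--Ichino triple product formula \eqref{eqn:tripleprod} to the triple of Hecke cusp forms $\phi_1 = \ph_j$, $\phi_2 = \ol{\ph_{j,k}}$, and $\phi_3 = \ph_{\ell,k}$, of respective weights $0$, $-2k$, and $2k$. These weights sum to zero and $\phi_3 = \ph_{\ell,k}$ is a shifted Hecke--Maa\ss{} cusp form, so the hypotheses of \eqref{eqn:tripleprod} are satisfied. Combining \eqref{eqn:intclassicaltoadelic} with \eqref{eqn:tripleprod}, the constants $\pi/6$ and $36/\pi^2$ cancelling after squaring, yields
\[
\abs{\int_{\slzh}\ph_j(z)\ol{\ph_{j,k}(z)}\ph_{\ell,k}(z)\,\dmu(z)}^2
= L\!\left(\half,\ph_j\otimes\ol{\ph_{j,k}}\otimes\ph_{\ell,k}\right)\abs{\mathcal{I}}^2,
\]
where $\mathcal{I} \coloneqq \int_{\R^\times}W_1\begin{psmallmatrix} y & 0 \\ 0 & 1\end{psmallmatrix}W_2\begin{psmallmatrix} y & 0 \\ 0 & 1\end{psmallmatrix}f_3\begin{psmallmatrix} y & 0 \\ 0 & 1\end{psmallmatrix}\abs{y}^{-1}\,\mathrm{d}^\times y$, with $W_1, W_2$ the local Whittaker functions of $\ph_j$ and $\ol{\ph_{j,k}}$ and $f_3$ the element of the induced model attached to $\ph_{\ell,k}$.

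The triple product $L$-function depends only on the underlying automorphic representations, which are unchanged by the weight shifts and by complex conjugation (as $\ph_j$ has real Hecke eigenvalues). Thus $L(\half,\ph_j\otimes\ol{\ph_{j,k}}\otimes\ph_{\ell,k}) = L(\half,\ph_j\otimes\ph_j\otimes\ph_\ell)$, which by the definition \eqref{eqn:Lsaddefeq} of the adjoint-twist $L$-function factors as $L(\half,\ph_\ell)\,L(\half,\ad\ph_j\otimes\ph_\ell)$; this is precisely the numerator of the claimed identity.

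It remains to evaluate $\mathcal{I}$, which I would carry out exactly as in the Eisenstein computation of Lemma \ref{lem:EisMaassMaass}. Taking $k = 0$ in \eqref{eqn:Wphidiagprincipal} gives $W_1\begin{psmallmatrix} y & 0 \\ 0 & 1\end{psmallmatrix} = \sgn(y)^{\kappa_j}\rho_j(1)W_{0,ir_j}(4\pi\abs{y})$; conjugating the weight-$2k$ instance of \eqref{eqn:Wphidiagprincipal} writes $W_2$ in terms of the constants $D_{k,-r_j}^{\pm}$ of \eqref{eqn:Dkrpm} and the Whittaker functions $W_{\pm k,-ir_j}$; and \eqref{eqn:fphi}--\eqref{eqn:fphi2} give $f_3\begin{psmallmatrix} y & 0 \\ 0 & 1\end{psmallmatrix} = \sgn(y)^{\kappa_\ell}\abs{y}^{\half+ir_\ell}\pi^{-\half-ir_\ell}\Gamma(\half+ir_\ell)\rho_\ell(1)$. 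Splitting $\R^\times$ into its positive and negative parts---the local analogue of separating positive and negative Fourier frequencies in Lemma \ref{lem:EisMaassMaass}---and substituting $u = 4\pi\abs{y}$, the two halves produce the summands $W_{-k,-ir_j}(u)/\Gamma(\half-k-ir_j)$ and $W_{k,-ir_j}(u)/\Gamma(\half+k-ir_j)$, with the accumulated powers of $u$ collapsing to $u^{-\half+ir_\ell}\,\frac{\du}{u}$. The step I expect to be most error-prone is the ensuing bookkeeping of constants: one must track the powers of $4\pi$ from the substitution together with the gamma factors $\Gamma(\half\pm ir_j)$ and $\Gamma(\half+ir_\ell)$, and then invoke \eqref{eqn:cusp-form-norm} for $\rho_j(1)^2$ and $\rho_\ell(1)^2$ to convert these into the denominator $L(1,\ad\ph_j)^2 L(1,\ad\ph_\ell)$ and the overall constant $\frac{\pi^3}{2}$. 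A helpful consistency check is that the identical gamma factors and powers of $4\pi$ must already reproduce the structure verified in \eqref{eqn:EisMaassMaass}.

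A genuine subtlety is parity. The factor $\sgn(y)^{\kappa_\ell}$ in $f_3$ forces the negative part of $\R^\times$ to contribute with the sign $\epsilon_\ell = (-1)^{\kappa_\ell}$, so $\mathcal{I}$ naturally assembles the combination $W_{-k,-ir_j}/\Gamma(\half-k-ir_j) + \epsilon_\ell\,W_{k,-ir_j}/\Gamma(\half+k-ir_j)$ rather than the symmetric sum appearing in \eqref{eqn:MaassMaassMaass}. When $\ph_\ell$ is even this is exactly the desired combination. When $\ph_\ell$ is odd, the sign of the functional equation of $L(s,\ph_j\otimes\ph_j\otimes\ph_\ell)$ equals $\epsilon_\ell = -1$, forcing $L(\half,\ph_\ell)\,L(\half,\ad\ph_j\otimes\ph_\ell) = L(\half,\ph_j\otimes\ph_j\otimes\ph_\ell) = 0$; both sides of \eqref{eqn:MaassMaassMaass} then vanish and the identity holds trivially, independently of the relative sign. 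Hence one may assume $\epsilon_\ell = +1$ throughout, recovering the symmetric combination and completing the proof.
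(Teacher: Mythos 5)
Your proposal is correct and follows essentially the same route as the paper: the same assignment $\phi_1 = \ph_j$, $\phi_2 = \ol{\ph_{j,k}}$, $\phi_3 = \ph_{\ell,k}$ in the Watson--Ichino formula, the same Gelbart--Jacquet factorization of the triple product $L$-function, and the same local evaluation via \eqref{eqn:Wphidiagprincipal}, \eqref{eqn:fphi}, and \eqref{eqn:fphi2}. Your handling of the parity issue (reducing to $\ph_{\ell}$ even because both sides vanish in the odd case) is exactly the reduction the paper makes, just spelled out in slightly more detail.
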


\begin{proof}
We apply the Watson--Ichino triple product formula \eqref{eqn:tripleprod}, in conjunction with the classical-to-ad\`{e}lic correspondence \eqref{eqn:intclassicaltoadelic}, in the case where the integrand is $\ph_j \ol{\ph_{j,k}}\ph_{\ell,k}$. Thus we set $\phi_1 = \varphi_j$, $\phi_2 = \ol{\ph_{j,k}}$, and $\phi_3 = \ph_{\ell,k}$, and we analyze the right-hand side of \eqref{eqn:tripleprod}. We may factor the triple product $L$-function in \eqref{eqn:tripleprod} as
\[L\left(\frac{1}{2},\ph_{\ell}\right)L\left(\frac{1}{2},\ad \ph_j \otimes \ph_{\ell}\right)\]
via \cite[(9.3) Theorem]{GJ78}. Note that both central $L$-values vanish unless $\ph_{\ell}$ is even, which we assume without loss of generality is the case. We consider the remaining integral in \eqref{eqn:tripleprod}. Recall that $W_1$ and $W_2$ are the local Whittaker functions associated to $\varphi_j$ and $\ol{\ph_{j,k}}$, while $f_3$ is the element of the induced model corresponding to the local Whittaker function $W_3$ for $\ph_{\ell,k}$. From \eqref{eqn:Wphidiagprincipal}, we have that
\begin{equation}
\label{eqn:W1}
W_1\begin{pmatrix} y & 0 \\ 0 & 1 \end{pmatrix} = \sgn(y)^{\kappa_j} \rho_j(1) W_{0,ir_j}(4\pi|y|)
\end{equation}
while comparing \eqref{eqn:phjkFW} and \eqref{eqn:adelicliftWhittaker}, we have that
\[W_2\begin{pmatrix} y & 0 \\ 0 & 1 \end{pmatrix} = D_{k,-r_j}^{\sgn(y)} \sgn(y)^{\kappa_j} \rho_j(1) W_{\sgn(y)k,-ir_j}(4\pi|y|),\]
Finally, we have from \eqref{eqn:fphi} and \eqref{eqn:fphi2} that
\[f_3\begin{pmatrix} y & 0 \\ 0 & 1 \end{pmatrix} = \pi^{-\frac{1}{2} - ir_{\ell}} \Gamma\left(\frac{1}{2} + ir_{\ell}\right) \rho_{\ell}(1) |y|^{\frac{1}{2} + ir_{\ell}},\]
where the assumption that $\varphi_{\ell}$ is even means that we may omit $\sgn(y)^{\kappa_{\ell}}$. Inserting these formul\ae{} and making the substitution $u = 4\pi |y|$, we deduce that
\begin{multline*}
\int_{\R^\times} W_1\begin{pmatrix} y & 0 \\ 0 & 1 \end{pmatrix} W_2\begin{pmatrix} y & 0 \\ 0 & 1 \end{pmatrix} f_3\begin{pmatrix} y & 0 \\ 0 & 1 \end{pmatrix} |y|^{-1} \, \mathrm{d}^\times y = 2 (2\pi)^{-2ir_{\ell}} \Gamma\left(\frac{1}{2} + ir_{\ell}\right) \rho_{\ell}(1) \rho_j(1)^2	\\
\times \int_{0}^{\infty} W_{0,ir_j}(u) \left(D_{k,-r_j}^{+} W_{k,-ir_j}(u) + D_{k,-r_j}^{-} W_{-k,-ir_j}(u)\right) u^{-\frac{1}{2}+ir_{\ell}} \, \frac{\du}{u}.
\end{multline*}
The desired identity now follows from the identities \eqref{eqn:Dkrpm} for $D_{k,-r_j}^{\pm}$ (and noting that $\overline{\Gamma(z)} = \Gamma(\overline{z})$) and \eqref{eqn:cusp-form-norm} for $\rho_{\ell}(1)$ and $\rho_j(1)^2$.
\end{proof}

\begin{theorem}
\label{thm:MaassMaassMaass}
For any $\delta > 0$ and $A > 0$, given a subconvex bound of the form
\begin{equation}
\label{eqn:subconvexMaass}
L\left(\frac{1}{2},\ad \varphi_1 \otimes \varphi_2\right) \ll r_1^{1 - 4\delta} r_2^{2A},
\end{equation}
where $\varphi_1,\varphi_2$ are arbitrary Hecke--Maa{\ss} cusp forms with spectral parameters $r_1,r_2$, we have that
\[\int_{\slzh} \ph_j(z) \ol{\ph_{j,k}(z)} \ph_{\ell,k}(z) \, \dmu(z) \ll_{k,\e} r_j^{\frac{1}{2} - 2\delta} \log r_j \: r_{\ell}^{A - \frac{1}{4} + \e} (2r_j + r_{\ell})^{-\frac{1}{4}} (1 + |2r_j - r_{\ell}|)^{-\frac{1}{4}}\]
for any shifted Hecke--Maa{\ss} cusp form $\ph_{\ell,k}$ of weight $2k$ and spectral parameter $r_{\ell}$.
\end{theorem}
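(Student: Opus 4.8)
The plan is to mirror the proof of Theorem~\ref{thm:MaassMaassEis}, taking the square root of the explicit identity \eqref{eqn:MaassMaassMaass} furnished by the preceding lemma and estimating each factor separately. As before, I would treat only the positive weight case, since the negative weight case follows by conjugational symmetry. When $\varphi_\ell$ is odd, both central $L$-values on the right-hand side of \eqref{eqn:MaassMaassMaass} vanish, so the integral vanishes identically and the desired bound holds trivially; I therefore assume throughout that $\varphi_\ell$ is even.

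First, I would bound the ratio of $L$-functions in \eqref{eqn:MaassMaassMaass}. In the numerator, the convexity bound \eqref{eqn:L1/2phconvexity} gives $L(\half,\varphi_\ell) \ll_\e r_\ell^{1/2+\e}$, and the hypothesized subconvex bound \eqref{eqn:subconvexMaass} gives $L(\half,\ad\varphi_j \otimes \varphi_\ell) \ll r_j^{1-4\delta} r_\ell^{2A}$. In the denominator, the lower bound in \eqref{eqn:L1adboundsMaass} gives $L(1,\ad\varphi_\ell) \gg 1/\log r_\ell$ and $L(1,\ad\varphi_j)^2 \gg 1/(\log r_j)^2$. Combining these, the ratio of $L$-functions is $O_\e\bigl(r_j^{1-4\delta} (\log r_j)^2 r_\ell^{2A+1/2+\e} \log r_\ell\bigr)$.

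Next, I would bound the integral of Whittaker functions in \eqref{eqn:MaassMaassMaass}. This is precisely the integral appearing in \eqref{eqn:EisMaassMaass}, with the parameter $t$ there replaced by the spectral parameter $r_\ell$; accordingly, Corollary~\ref{cor:Ikellrbound} shows that it is $O_k\bigl((1+r_\ell)^{-1/2}(2r_j+r_\ell)^{-1/4}(1+|2r_j-r_\ell|)^{-1/4}\bigr)$.

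Finally, I would take the square root of \eqref{eqn:MaassMaassMaass} and multiply the two estimates. The square root of the $L$-ratio contributes $r_j^{1/2-2\delta}(\log r_j)(\log r_\ell)^{1/2} r_\ell^{A+1/4+\e}$, and multiplying by the Whittaker integral bound (using $1+r_\ell \asymp r_\ell$ since $r_\ell \geq r_1 \approx 9.534$) collapses the $r_\ell$ exponent to $A-1/4+\e$. Absorbing the residual factor $(\log r_\ell)^{1/2}$ into $r_\ell^\e$ after relabeling $\e$ then yields the claimed estimate. The genuine difficulty of the whole argument is concentrated in Corollary~\ref{cor:Ikellrbound}: one must establish the precise simultaneous decay of the Whittaker integral in the three quantities $r_j$, $2r_j+r_\ell$, and $2r_j-r_\ell$, which requires rewriting the integral in terms of hypergeometric functions and applying Stirling's formula uniformly across the different ranges of $r_\ell$ relative to $r_j$; once that input is in hand, the present theorem follows by the bookkeeping above.
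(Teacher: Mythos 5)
Your proposal is correct and follows essentially the same route as the paper: bound the $L$-function ratio in \eqref{eqn:MaassMaassMaass} via the hypothesized subconvex bound \eqref{eqn:subconvexMaass}, the convexity bound \eqref{eqn:L1/2phconvexity}, and the lower bounds \eqref{eqn:L1adboundsMaass}, bound the Whittaker integral via the appendix, and take square roots. The only slip is the citation: the relevant bound for the integral $I_k(r_j,-r_j,r_\ell)$ is Corollary \ref{cor:Ikrrtbound}, not Corollary \ref{cor:Ikellrbound} (which treats the holomorphic-case integral $I_{k,\ell}(r)$), though the estimate you quote is the correct one.
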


\begin{proof}
We consider only the positive weight case; the analogous bounds for the negative weight case follow by conjugational symmetry. We bound the expression \eqref{eqn:MaassMaassMaass}. Via the assumption of the subconvex bound \eqref{eqn:subconvexMaass}, the bound \eqref{eqn:L1adboundsMaass}, and the convexity bound \eqref{eqn:L1/2phconvexity}, the ratio of $L$-functions in \eqref{eqn:MaassMaassMaass} is $O_{\e}(r_j^{1 - 4\delta} (\log r_j)^2 r_{\ell}^{2A + 1/2 + \e})$. It remains to deal with the integral of Whittaker functions. In Corollary \ref{cor:Ikrrtbound}, we show that this integral is $O_k(r_{\ell}^{-1/2} (2r_j + r_{\ell})^{-1/4} (1 + |2r_j - r_{\ell}|)^{-1/4})$. This yields the desired estimate.
\end{proof}

\section{Holomorphic Cusp Form Computation}

We once more use the Watson--Ichino triple product formula in order to complete the final step of our main theorem, namely proving the desired bound for holomorphic or antiholomorphic Hecke cusp forms. The Watson--Ichino triple product formula allows us to relate an integral of a Hecke--Maa\ss{} cusp form, a shifted Hecke--Maa\ss{} cusp form, and a shifted holomorphic or antiholomorphic Hecke cusp form to the product of a ratio of $L$-functions and an integral involving Whittaker functions.

\begin{lemma}
For any shifted holomorphic Hecke cusp form $f_k$ of weight $2k > 0$ arising from a holomorphic Hecke cusp form $F$ of weight $2\ell > 0$, we have that
\begin{multline}
\label{eqn:MaassMaasshol}
\left| \int_{\slzh} \ph_j(z) \ol{\ph_{j,k}(z)} f_{k}(z) \, \dmu(z)\right|^2 = \frac{\pi^3}{2} \frac{L\left(\frac{1}{2},F\right) L\left(\frac{1}{2}, \ad \ph_j \otimes F\right)}{L(1,\ad F) L(1,\ad \ph_j)^2}	\\
\times \left|\int_0^\infty \frac{W_{0,i r_j}(u)}{\Gamma\left(\frac{1}{2} + ir_j\right)} \frac{W_{k,\ell-\frac{1}{2}}(u)}{\sqrt{\Gamma(k + \ell) \Gamma(k - \ell + 1)}} u^{-\frac{1}{2} - ir_j} \, \frac{\du}{u}\right|^2.
\end{multline} 
\end{lemma}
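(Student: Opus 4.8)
The plan is to follow the proof of the Maass--Maass--Maass identity \eqref{eqn:MaassMaassMaass} almost verbatim, the one structural difference being that the holomorphic factor $f_k$ generates a discrete series rather than a principal series representation and so cannot be the form whose induced model is used. Concretely, I would apply the Watson--Ichino triple product formula \eqref{eqn:tripleprod} together with the classical-to-ad\`{e}lic correspondence \eqref{eqn:intclassicaltoadelic} to the integrand $\ph_j \ol{\ph_{j,k}} f_k$, taking $\phi_1 = \ph_j$, $\phi_2 = f_k$, and $\phi_3 = \ol{\ph_{j,k}}$, so that $k_1 + k_2 + k_3 = 0 + 2k + (-2k) = 0$. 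Here $\ol{\ph_{j,k}}$ is a scalar multiple of the shifted Hecke--Maass cusp form $\ph_{j,-k}$ and hence legitimately plays the role of $\phi_3$; the principal series it generates has spectral parameter $-r_j$, which is precisely what produces the exponent $u^{-\frac{1}{2}-ir_j}$ in \eqref{eqn:MaassMaasshol}.

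I would first factor the triple product $L$-function. Since $f_k$ and $\ol{\ph_{j,k}}$ generate the same cuspidal automorphic representations as $F$ and $\ph_j$, we have $L(\frac{1}{2},\ph_j \otimes f_k \otimes \ol{\ph_{j,k}}) = L(\frac{1}{2},\ph_j \otimes \ph_j \otimes F)$, and the Gelbart--Jacquet factorization \cite[(9.3) Theorem]{GJ78} together with the definition \eqref{eqn:Lsaddefeq} gives $L(\frac{1}{2},\ph_j \otimes \ph_j \otimes F) = L(\frac{1}{2},F) L(\frac{1}{2},\ad \ph_j \otimes F)$, matching the numerator of \eqref{eqn:MaassMaasshol}. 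Because the asserted identity is one of squared moduli, it remains valid even when these central values vanish (for example when the root number forces $L(\frac{1}{2},F) = 0$), so no parity hypothesis is needed.

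Next I would record the three local quantities at $\begin{psmallmatrix} y & 0 \\ 0 & 1 \end{psmallmatrix}$: from \eqref{eqn:Wphidiagprincipal} with weight $0$, $W_1 = \sgn(y)^{\kappa_j} \rho_j(1) W_{0,ir_j}(4\pi|y|)$; from \eqref{eqn:Wphidiagdiscrete}, $W_2 = C_{k,\ell} \rho_F(1) W_{k,\ell-\frac{1}{2}}(4\pi|y|)$ for $y > 0$ and $W_2 = 0$ for $y < 0$; and from \eqref{eqn:fphi} and \eqref{eqn:fphi2} with $r_j$ replaced by $-r_j$, $f_3 = \sgn(y)^{\kappa_j} \pi^{-\frac{1}{2}+ir_j} \Gamma(\frac{1}{2}-ir_j) \rho_j(1) |y|^{\frac{1}{2}-ir_j}$. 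The decisive simplification, absent in the Maass case, is that the one-sided support of the discrete series Whittaker function $W_2$ collapses the integral over $\R^\times$ to an integral over $\R_{>0}$, on which the parity factors $\sgn(y)^{\kappa_j}$ are identically $1$; this eliminates both the factor of $2$ and the two-term sum $D_{k,-r_j}^{+} W_{k,-ir_j} + D_{k,-r_j}^{-} W_{-k,-ir_j}$ present in \eqref{eqn:MaassMaassMaass}. Convergence at the origin is unproblematic, since $W_{k,\ell-\frac{1}{2}}(u)$, like the unshifted $W_{\ell,\ell-\frac{1}{2}}(u) = u^\ell e^{-u/2}$, vanishes to order $\ell$ there.

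Finally, inserting these formulae and substituting $u = 4\pi y$ reduces the matrix coefficient integral to a constant multiple of $\int_0^\infty W_{0,ir_j}(u) W_{k,\ell-\frac{1}{2}}(u) u^{-\frac{1}{2}-ir_j} \, \frac{\du}{u}$, an integral of the same shape as those evaluated in Lemmata \ref{lem:EisEishol} and \ref{lem:EisMaassMaass} via \cite[(7.621.11) and (9.237.3)]{GR15}. Taking squared moduli and inserting the normalizations \eqref{eqn:cusp-form-norm}, \eqref{eqn:hol-cusp-form-norm}, and \eqref{eqn:Ckl}, I would collect the constants, using $|\Gamma(\frac{1}{2}-ir_j)|^2 = \Gamma(\frac{1}{2}+ir_j)\Gamma(\frac{1}{2}-ir_j)$ to cancel one factor arising from $\rho_j(1)^4$ and observing that the surviving powers of $\pi$ and $4\pi$ either have unit modulus or combine into the prefactor $\frac{\pi^3}{2}$. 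The only genuine labor is this constant bookkeeping, which must be arranged so that $\Gamma(\frac{1}{2}+ir_j)^{-1}$ and $(\Gamma(k+\ell)\Gamma(k-\ell+1))^{-1/2}$ reappear inside the squared integral exactly as displayed; I anticipate no serious analytic obstacle, as the argument is a faithful transcription of the Maass--Maass--Maass computation with one principal series Whittaker function replaced by the discrete series Whittaker function \eqref{eqn:Wphidiagdiscrete}.
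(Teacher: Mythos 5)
Your proposal follows the paper's proof essentially verbatim: the same assignment $\phi_1 = \varphi_j$, $\phi_2 = f_k$, $\phi_3 = \ol{\varphi_{j,k}}$ in the Watson--Ichino formula, the same Gelbart--Jacquet factorization of the triple product $L$-function, and the same local data \eqref{eqn:Wphidiagprincipal}, \eqref{eqn:Wphidiagdiscrete}, \eqref{eqn:fphi} with spectral parameter $-r_j$. One bookkeeping slip worth correcting: the one-sided support of the discrete-series Whittaker function does collapse the two-term sum, but it does \emph{not} eliminate the factor of $2$, which in both the Maa\ss{} and holomorphic cases arises from the substitution $u = 4\pi|y|$ via $\pi^{-\frac{1}{2}+ir_j}(4\pi)^{\frac{1}{2}+ir_j} = 2(2\pi)^{2ir_j}$ and persists in the paper's intermediate formula; dropping it as you describe would yield a prefactor of $\pi^3/8$ rather than the correct $\pi^3/2$ that you (consistently with the paper) assert at the end.
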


\begin{proof}
We apply the Watson--Ichino triple product formula \eqref{eqn:tripleprod}, in conjunction with the classical-to-ad\`{e}lic correspondence \eqref{eqn:intclassicaltoadelic}, in the case where the integrand is $\ph_j \ol{\ph_{j,k}}f_k$. Thus we set $\phi_1 = \varphi_j$, $\phi_2 = f_k$, and $\phi_3 = \ol{\ph_{j,k}}$. We may again factor the triple product $L$-function in \eqref{eqn:tripleprod} as
\[L\left(\frac{1}{2},F\right)L\left(\frac{1}{2},\ad \ph_j \otimes F\right)\]
via \cite[(9.3) Theorem]{GJ78}. Both central $L$-values vanish unless $\ell$ is even, which we assume without loss of generality is the case. We consider the remaining integral in \eqref{eqn:tripleprod}. Here $W_1$ is once more as in \eqref{eqn:W1}. Next, $W_2$ is the local Whittaker function associated to $f_k$, so that from \eqref{eqn:Wphidiagdiscrete},
\[W_2\begin{pmatrix} y & 0 \\ 0 & 1 \end{pmatrix} = \begin{dcases*}
C_{k,\ell} \rho_F(1) W_{k,\ell - \frac{1}{2}}(4\pi|y|) & if $y > 0$,	\\
0 & if $y < 0$,
\end{dcases*}\]
noting that $k$ is assumed to be positive. From \eqref{eqn:fphi} and \eqref{eqn:fphi2}, the element of the induced model associated to $\ol{\ph_{j,k}}$ satisfies
\[f_3\begin{pmatrix} y & 0 \\ 0 & 1 \end{pmatrix} = \pi^{-\frac{1}{2} + ir_j} \Gamma\left(\frac{1}{2} - ir_j\right) \sgn(y)^{\kappa_j} \rho_j(1) |y|^{\frac{1}{2} - ir_j}.\]
Inserting these formul\ae{} and making the substitution $u = 4\pi |y|$, we deduce that
\begin{multline*}
\int_{\R^\times} W_1\begin{pmatrix} y & 0 \\ 0 & 1 \end{pmatrix} W_2\begin{pmatrix} y & 0 \\ 0 & 1 \end{pmatrix} f_3\begin{pmatrix} y & 0 \\ 0 & 1 \end{pmatrix} |y|^{-1} \, \mathrm{d}^\times y	\\
= 2 (2\pi)^{2ir_j} \gamer{\frac{1}{2}-ir_j} C_{k,\ell} \rho_F(1) \rho_j(1)^2 \int_{0}^{\infty} W_{0,ir_j}(u) W_{k,\ell-\frac{1}{2}}(u) u^{-\frac{1}{2}-ir_j} \, \frac{\du}{u}.
\end{multline*}
The desired identity now follows from the identities \eqref{eqn:Ckl} for $C_{k,\ell}$, \eqref{eqn:hol-cusp-form-norm} for $\rho_F(1)$, and \eqref{eqn:cusp-form-norm} for $\rho_j(1)^2$.
\end{proof}

\begin{theorem}
\label{thm:MaassMaasshol}
For any $\delta > 0$, given a subconvex bound of the form
\begin{equation}
\label{eqn:subconvexhol}
L\left(\frac{1}{2},\ad \phi \otimes F\right) \ll_{\ell} r^{1 - 4\delta},
\end{equation}
where $\phi$ is an arbitrary Hecke--Maa{\ss} cusp form with spectral parameters $r$ and $F$ is a holomorphic Hecke cusp form of weight $2 \ell > 0$, we have that
\[\int_{\slzh} \ph_j(z) \ol{\ph_{j,k}(z)} f_k(z) \, \dmu(z) \ll_{k,\ell} r_j^{-2\delta} \log r_j\]
for any shifted holomorphic or antiholomorphic Hecke cusp form $f_k$ of weight $2k$ arising from a holomorphic Hecke cusp form $F$ of weight $2\ell > 0$ for which $\ell \leq |k|$.
\end{theorem}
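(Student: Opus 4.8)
The plan is to mirror the proofs of Theorems \ref{thm:MaassMaassEis} and \ref{thm:EisEishol}: I would bound the square of the quantity of interest via the factorisation \eqref{eqn:MaassMaasshol} furnished by the preceding lemma, and then take square roots. As in those arguments, I would first reduce to the positive weight case $k > 0$, the antiholomorphic (negative weight) case following by conjugational symmetry exactly as in Theorems \ref{thm:MaassMaassEis} and \ref{thm:MaassMaassMaass}. The two ingredients to estimate are then the ratio of $L$-functions and the archimedean integral of Whittaker functions appearing on the right-hand side of \eqref{eqn:MaassMaasshol}.

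For the ratio of $L$-functions, I would bound each factor in turn. Since $F$ is fixed of weight $2\ell$, both $L\left(\frac{1}{2},F\right)$ and $L(1,\ad F)$ are $O_\ell(1)$, the latter bounded away from zero by the lower bound in \eqref{eqn:L1adboundshol}, so these together contribute only a constant depending on $\ell$. The central value $L\left(\frac{1}{2},\ad \ph_j \otimes F\right)$ is controlled by the hypothesised subconvex bound \eqref{eqn:subconvexhol}, giving $O_\ell(r_j^{1 - 4\delta})$, while the lower bound in \eqref{eqn:L1adboundsMaass} yields $L(1,\ad \ph_j)^{-2} \ll (\log r_j)^2$. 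Combining these, the ratio of $L$-functions in \eqref{eqn:MaassMaasshol} is $O_\ell\!\left(r_j^{1 - 4\delta} (\log r_j)^2\right)$.

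The key observation for the remaining factor is that the Whittaker integral in \eqref{eqn:MaassMaasshol} is of exactly the same shape as the one appearing in \eqref{eqn:EisEishol}, with the Eisenstein parameter $t$ replaced by the spectral parameter $r_j$. Consequently the estimate from Corollary \ref{cor:Ikellrbound} invoked in the proof of Theorem \ref{thm:EisEishol} applies verbatim, showing that this integral is $O_{k,\ell}(r_j^{-1/2})$, and hence that its squared modulus is $O_{k,\ell}(r_j^{-1})$. Multiplying this by the bound for the ratio of $L$-functions gives
\[\left| \int_{\slzh} \ph_j(z) \ol{\ph_{j,k}(z)} f_k(z) \, \dmu(z)\right|^2 \ll_{k,\ell} r_j^{-4\delta} (\log r_j)^2,\]
and taking square roots yields the claimed bound $\ll_{k,\ell} r_j^{-2\delta} \log r_j$. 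Since every analytic difficulty has already been absorbed into Corollary \ref{cor:Ikellrbound} and the subconvexity hypothesis, there is no genuine obstacle; the only point requiring care is to confirm that this Whittaker integral truly coincides in form with the one already treated in \eqref{eqn:EisEishol}, so that the earlier estimate can be reused without modification.
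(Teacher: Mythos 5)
Your proposal is correct and follows essentially the same route as the paper's own proof: reduce to positive weight by conjugational symmetry, bound the $L$-function ratio in \eqref{eqn:MaassMaasshol} by $O_{\ell}(r_j^{1-4\delta}(\log r_j)^2)$ via the subconvexity hypothesis \eqref{eqn:subconvexhol} and the lower bounds \eqref{eqn:L1adboundsMaass} and \eqref{eqn:L1adboundshol}, and bound the Whittaker integral, which is precisely $I_{k,\ell}(r_j)$, by $O_{k,\ell}(r_j^{-1/2})$ using Corollary \ref{cor:Ikellrbound}. Your final check that this integral coincides in form with the one in \eqref{eqn:EisEishol} (with $t$ replaced by $r_j$) is exactly right, so the earlier estimate applies verbatim.
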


\begin{proof}
We consider only the positive weight case; the analogous bounds for the negative weight case follow by conjugational symmetry. We bound the expression \eqref{eqn:MaassMaasshol}. Via the assumption of the subconvex bound \eqref{eqn:subconvexhol} and the bound \eqref{eqn:L1adboundsMaass}, the ratio of $L$-functions in \eqref{eqn:MaassMaasshol} is $O_{\ell,\e}(r_j^{1 - 4\delta} (\log r_j)^2)$. It remains to deal with the integral of Whittaker functions. In Corollary \ref{cor:Ikellrbound}, we show that this integral is $O_{k,\ell}(r_j^{-1/2})$. This yields the desired estimate.
\end{proof}

\section{Putting Everything Together}

In this section, we prove Theorem \ref{thm:mainthm}.

\begin{proof}[Proof of Theorem \ref{thm:mainthm}]
Let $a \in C_{c,K}^\infty(\slzr)$. We recall from \eqref{eqn:clearnerTheorem} that
\begin{multline*}
\int_{\slzr} a(z, \theta) \, \domega_j(z, \theta) = \int_{\slzr} a(z,\theta) \, \domega(z,\theta)	\\
+ \frac{3}{\pi} \sum_{\ell = 1}^{\infty} \sum_{k = -\infty}^{\infty} \left\langle a, \Phi_{\ell,k} \right\rangle \int_{\slzh} \varphi_j(z) \overline{\varphi_{j,k}(z)} \varphi_{\ell,k}(z) \, \dmu(z)	\\
+ \frac{3}{\pi} \sum_{\ell = 1}^{\infty} \sum_{F \in \mathcal{H}_{\ell}} \sum_{\substack{k = -\infty \\ |k| \geq \ell}}^{\infty} \left\langle a, \Psi_{F,k} \right\rangle \int_{\slzh} \varphi_j(z) \overline{\varphi_{j,k}(z)} f_k(z) \, \dmu(z) \\
+ \frac{3}{4 \pi^2} \sum_{k = -\infty}^{\infty} \int_{-\infty}^\infty \left\langle a, \tilda{E}_{2k}\left(\cdot, \cdot,\frac{1}{2}+it\right)\right\rangle \int_{\slzh} \varphi_j(z) \overline{\varphi_{j,k}(z)} E_{2k}\left(z,\frac{1}{2} + it\right) \, \dmu(z) \, \dt.
\end{multline*}
Since $\Phi_{\ell,k}$, $\Psi_{F,k}$, and $\widetilde{E}_{2k}(\cdot,\cdot,\frac{1}{2} + it)$ are eigenfunctions of the Casimir operator $\Omega$, we have that for any nonnegative integer $A$,
\begin{align*}
\Omega^A \Phi_{\ell,k} & = \left(\frac{1}{4} + r_{\ell}^2\right)^A\Phi_{\ell,k},	\\
\Omega^A \Psi_{F,k} & = (\ell(1 - \ell))^A \Psi_{F,k},	\\
\Omega^A \tilda{E}_{2k}\left(\cdot, \cdot,\frac{1}{2}+it\right) & = \left(\frac{1}{4} + t^2\right)^A \tilda{E}_{2k}\left(\cdot, \cdot,\frac{1}{2}+it\right).
\end{align*}
By the linearity of the inner product together with the fact that the Casimir operator is self-adjoint with respect to the inner product, we deduce that
\begin{align*}
\left\langle a, \Phi_{\ell,k} \right\rangle & = \left(\frac{1}{4} + r_{\ell}^2\right)^{-A} \left\langle \Omega^A a, \Phi_{\ell,k} \right\rangle,	\\
\left\langle a, \Psi_{F,k} \right\rangle & = (\ell(1 - \ell))^{-A} \left\langle \Omega^A a, \Psi_{F,k} \right\rangle,	\\
\left\langle a, \tilda{E}_{2k}\left(\cdot, \cdot,\frac{1}{2}+it\right) \right\rangle & = \left(\frac{1}{4} + t^2\right)^{-A} \left\langle \Omega^A a, \tilda{E}_{2k}\left(\cdot, \cdot,\frac{1}{2}+it\right) \right\rangle.
\end{align*}
Moreover, since $a$ is $K$-finite, there exists a nonnegative integer $M$ for which
\[\left\langle a, \Phi_{\ell,k} \right\rangle = \left\langle a, \Psi_{F,k} \right\rangle = \left\langle a, \tilda{E}_{2k}\left(\cdot, \cdot,\frac{1}{2}+it\right)\right\rangle = 0\]
whenever $|k| > M$. From Theorems \ref{thm:MaassMaassEis}, \ref{thm:MaassMaassMaass}, and \ref{thm:MaassMaasshol}, we deduce that
\begin{multline*}
\int_{\slzr} a(z, \theta) \, \domega_j(z, \theta) - \int_{\slzr} a(z,\theta) \, \domega(z,\theta)	\\
\ll_{M,\e} r_j^{\frac{1}{2} - 2\delta} \log r_j \sum_{\ell = 1}^{\infty} \sum_{k = -M}^{M} \left|\left\langle \Omega^{\left\lceil \frac{A + 1}{2}\right\rceil} a, \Phi_{\ell,k} \right\rangle\right| r_{\ell}^{-\frac{5}{4} + \e} (2r_j + r_{\ell})^{-\frac{1}{4}} (1 + |2r_j - r_{\ell}|)^{-\frac{1}{4}}	\\
+ r_j^{-2\delta} \log r_j \sum_{\ell = 1}^{M} \sum_{F \in \mathcal{H}_{\ell}} \sum_{\substack{k = -M \\ |k| \geq \ell}}^{M} \left|\left\langle \Omega^{\left\lceil \frac{A + 1}{2}\right\rceil} a, \Psi_{F,k} \right\rangle\right| \ell^{-A - 1} \\
+ r_j^{\frac{1}{2} -2\delta} \log r_j \sum_{k = -M}^{M} \int_{-\infty}^\infty \left|\left\langle \Omega^{\left\lceil \frac{A + 1}{2}\right\rceil} a, \tilda{E}_{2k}\left(\cdot, \cdot,\frac{1}{2}+it\right)\right\rangle\right| (1 + |t|)^{-\frac{5}{4} + \e} (2r_j + |t|)^{-\frac{1}{4}} (1 + |2r_j - |t||)^{-\frac{1}{4}} \, \dt.
\end{multline*}
Weyl's law \cite[Theorem 2]{Ris04} implies that $\#\{\ell \in \N : T - 1 < r_{\ell} \leq T\} \ll T$ for $T \geq 1$. Thus by subdividing the sum over $\ell \in \N$ into sums for which $r_{\ell} \in (T - 1,T]$ for each $T \in \N$, we find that
\[\sum_{\ell = 1}^{\infty} r_{\ell}^{-\frac{5}{2} + \e} (2r_j + r_{\ell})^{-\frac{1}{2}} (1 + |2r_j - r_{\ell}|)^{-\frac{1}{2}} \ll \frac{1}{r_j}.\]
Similarly,
\[\int_{-\infty}^{\infty} (1 + |t|)^{-\frac{5}{2} + \e} (2r_j + |t|)^{-\frac{1}{2}} (1 + |2r_j - |t||)^{-\frac{1}{2}} \, \dt \ll \frac{1}{r_j}.\]
Thus by the Cauchy--Schwarz inequality and Bessel's inequality (bearing in mind Parseval's identity \eqref{eqn:Parseval}), we deduce that
\begin{align}
\label{eqn:Sobolev}
\int_{\slzr} a(z, \theta) \, \domega_j(z, \theta) - \int_{\slzr} & a(z,\theta) \, \domega(z,\theta)	\\
\notag
& \ll_{M} \left\|\Omega^{\left\lceil \frac{A + 1}{2}\right\rceil} a\right\|_{L^2(\slzr)} r_j^{-2\delta} \log r_j.
\qedhere
\end{align}
\end{proof}

\begin{remark}
\label{rem:evenweight}
Theorem \ref{thm:mainthm} is proven for functions $a : \slzr \to \C$ that are finite linear combinations of even weight smooth compactly supported functions. In order to remove the condition that $a$ be a finite linear combination of even weight functions, we would require bounds for the integral \eqref{eqn:Phiellkdomegaj} that are uniform not only in $r_j$ and $r_{\ell}$ but additionally uniform in $k$; we would also similarly require such uniform bounds for the integrals \eqref{eqn:PsiFkdomegaj} and \eqref{eqn:tildaE2kdomegaj}. To prove such uniform bounds would require stronger bounds for certain hypergeometric functions than the weaker bounds we derive in Lemma \ref{lem:4F3bound} and Corollary \ref{cor:Ikellrbound} below, as we discuss in Remark \ref{rem:uniform}.
\end{remark}

\appendix

\section{Whittaker Integral Computations}

\subsection{Special Functions}

We compute integrals of Whittaker functions by expressing them in terms of generalized hypergeometric functions, as defined in \cite[Chapter 2]{Sla66}. A generalized hypergeometric function is defined, wherever it converges, as a series
\begin{equation}
\label{eqn:hypergeometric}
\pFqx{p}{q}{a_1,\dots,a_p}{b_1,\dots,b_q}{z} \coloneqq \sum_{m=0}^\infty\frac{(a_1)_m\cdots(a_p)_m}{(b_1)_m\cdots(b_q)_m} \frac{z^m}{m!}.
\end{equation}
Here $(b)_m \coloneqq b (b+1) \cdots (b+m-1)$ and $(b)_0 \coloneqq 1$ for all $b \in \C$, so that
\begin{equation}
\label{eqn:Pochhammer}
(b)_m = \begin{dcases*}
\frac{\Gamma(b + m)}{\Gamma(b)} & if $b$ is not a nonpositive integer,	\\
(-1)^m \frac{\Gamma(1 - b)}{\Gamma(1 - b - m)} & if $b$ is a nonpositive integer and $m \leq -b$,	\\
0 & if $b$ is a nonpositive integer and $m > -b$.
\end{dcases*}
\end{equation}
To bound hypergeometric functions, we must therefore bound gamma functions. We do this via Stirling's formula, which states that for $s \in \C$ with $\Re(s) > \delta$ with $\delta > 0$,
\[\Gamma(s) = \sqrt{2\pi} s^{s - \frac{1}{2}} e^{-s} \left(1 + O_{\delta}\left(\frac{1}{|s|}\right)\right).\]
We use this in the following form: for $s = \sigma + i\tau$ with $\sigma > 0$,
\begin{equation}
\label{eqn:Stirling}
|\Gamma(\sigma + i\tau)| \asymp_{\sigma} (1 + |\tau|)^{\sigma - \frac{1}{2}} e^{-\frac{\pi}{2}|\tau|}.
\end{equation}

\subsection{Non-Holomorphic Case}

We seek to provide an upper bound for an integral of the form
\[I_k(\alpha,\beta,\gamma)=\int_0^\infty \frac{W_{0,i\alpha}(y)}{\gamer{\half+i\alpha}} \left(\frac{W_{k,i\beta}(y)}{\gamer{\half+k+i\beta}}+\frac{W_{-k,i\beta}(y)}{\gamer{\half-k+i\beta}} \right) y^{-\half+i\gamma} \, \frac{\dy}{y},\]
where $k \in \Z$ and $\alpha,\beta,\gamma \in \R$. This can be expressed in terms of gamma functions and a terminating hypergeometric function.

\begin{lemma}[{\cite[(27)]{Jak97}}]
For $k \in \Z$ and $\alpha,\beta,\gamma \in \R$, we have that
\begin{multline}
\label{eqn:hypergeom}
I_k(\alpha,\beta,\gamma) = \frac{(-1)^k 4^{i\gamma}}{2\pi} \frac{\prod_{\epsilon_1, \epsilon_2 \in \{\pm 1\}} \gamer{\frac{1}{4}+\frac{i}{2}\left(\epsilon_1 \alpha + \epsilon_2 \beta + \gamma \right)}}{\gamer{\half+i\alpha}\gamer{\half+i\beta}\gamer{\half+i\gamma}}\\
\times \pFq{4}{3}{-k,k,\frac{1}{4}+\frac{i}{2}(-\alpha+\beta+\gamma),\frac{1}{4}+\frac{i}{2}(\alpha+\beta+\gamma) }{\half, \half+i\beta, \half+i\gamma}. 
\end{multline}
\end{lemma}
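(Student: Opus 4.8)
The plan is to recognise $I_k(\alpha,\beta,\gamma)$ as the Mellin transform, evaluated at $s = -\half + i\gamma$, of a product of two classical Whittaker functions, and to compute it via the Mellin transform of a \emph{single} Whittaker function together with the Mellin--Barnes (Parseval) convolution formula. The basic input is the standard evaluation
\[\int_0^\infty W_{\kappa,\mu}(y) \, y^{s-1} \, \dy = \frac{\gamer{s+\mu+\half} \gamer{s-\mu+\half}}{\gamer{s-\kappa+1}},\]
valid for $\Re(s) > |\Re(\mu)| - \half$; this is the analytic content behind the tabulated identities \cite[(7.611.7), (7.621.11)]{GR15} invoked in Lemmata \ref{lem:EisEishol} and \ref{lem:EisMaassMaass}. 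Applying it to $W_{0,i\alpha}$ and to $W_{\pm k,i\beta}$ and invoking Mellin--Parseval, each of the two pieces making up $I_k$ becomes a contour integral
\[\int_0^\infty W_{0,i\alpha}(y) W_{\pm k,i\beta}(y) y^{s-1} \, \dy = \frac{1}{2\pi i} \int_{(c)} \frac{\gamer{z+i\alpha+\half} \gamer{z-i\alpha+\half}}{\gamer{z+1}} \frac{\gamer{s-z+i\beta+\half} \gamer{s-z-i\beta+\half}}{\gamer{s-z\mp k+1}} \, \mathrm{d}z,\]
where one checks that the product $W_{0,i\alpha} W_{\pm k,i\beta}$ decays like $y^{|k|}e^{-y}$ at infinity and is $O(y^{-1/2})$ at the origin, so the defining integral converges at $s = -\half + i\gamma$ and the strip $-\half < c < 0$ is admissible.

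First I would shift the contour to the right and collect the residues from the two families of right-hand poles, namely those of $\gamer{s-z+i\beta+\half}$ and $\gamer{s-z-i\beta+\half}$; each family contributes a hypergeometric series, so the integral evaluates to an explicit ratio of gamma functions times a balanced $\pFqx{3}{2}{\cdots}{\cdots}{1}$. Specialising to $s = -\half + i\gamma$ produces precisely the four numerator factors $\gamer{\frac14 + \frac{i}{2}(\epsilon_1\alpha + \epsilon_2\beta + \gamma)}$ of \eqref{eqn:hypergeom} together with the denominator factors $\gamer{\half+i\beta}$ and $\gamer{\half+i\gamma}$, while the factor $\gamer{\half+i\alpha}$ is cancelled by the normalisation already present in the definition of $I_k$. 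The remaining task is to add the $+k$ and $-k$ contributions, weighted respectively by $\gamer{\half+k+i\beta}^{-1}$ and $\gamer{\half-k+i\beta}^{-1}$, and to verify that the two ${}_3F_2$'s coalesce into the single $\pFqx{4}{3}{\cdots}{\cdots}{1}$ carrying the extra numerator parameters $-k$ and $k$ and the extra denominator parameter $\half$. The terminating nature is automatic from the Pochhammer symbols $(-k)_m (k)_m$, which vanish for $m > k$ by \eqref{eqn:Pochhammer}, reflecting the polynomial dependence of $W_{\pm k,i\beta}$ on the integer shift $k$.

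The main obstacle is exactly this last coalescence: one must carry out the gamma-function bookkeeping carefully so that, after clearing the common prefactor, the two separately-evaluated ${}_3F_2$ series recombine into the symmetric terminating ${}_4F_3$, the parameter $\half$ emerging from the symmetrisation in $k \mapsto -k$. This is most transparent at the level of the Mellin--Barnes integrand, where replacing $k$ by $-k$ only alters the single entire factor $\gamer{s-z\mp k+1}^{-1}$, so the symmetric combination has a uniform pole structure and the residue series can be summed termwise; alternatively one may quote the tabulated product formula for $\int_0^\infty W_{\kappa,\mu}(y) W_{\lambda,\nu}(y) y^{s-1}\,\dy$ directly and combine. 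Throughout I would justify the rightward contour shift and the interchange of summation with integration using Stirling's estimate \eqref{eqn:Stirling}, which guarantees the requisite exponential decay of the integrand along vertical lines, and would handle the excluded half-integer or coincident-parameter cases by analytic continuation in $\alpha,\beta,\gamma$.
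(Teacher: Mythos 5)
There is a genuine gap, and it sits at the very foundation of your argument. The paper itself offers no proof of this lemma --- it is quoted from \cite[(27)]{Jak97}, whose derivation rests on the tabulated Mellin transform of a \emph{product} of two Whittaker functions, \cite[(7.611.7)]{GR15} (the same identity this paper invokes in Lemma \ref{lem:EisMaassMaass} to justify analytic continuation). Your ``basic input''
\[
\int_0^\infty W_{\kappa,\mu}(y)\, y^{s-1}\,\dy \;=\; \frac{\gamer{s+\mu+\half}\gamer{s-\mu+\half}}{\gamer{s-\kappa+1}}
\]
is not the Mellin transform of $W_{\kappa,\mu}$: it is the Mellin transform of $e^{-y/2}W_{\kappa,\mu}(y)$, i.e.\ \cite[(7.621.11)]{GR15}. (Sanity check: for $\kappa=\mu=0$ one has $W_{0,0}(y)=\sqrt{y/\pi}\,K_0(y/2)$, whose bare Mellin transform is $\tfrac{1}{2\sqrt{\pi}}\,4^{s}\,\Gamma(\tfrac{s}{2}+\tfrac14)^2$, not $\Gamma(s+\half)^2/\Gamma(s+1)$; the correct bare transform of $W_{\kappa,\mu}$ carries an extra factor $\pFqx{2}{1}{s+\mu+\half,\,s-\mu+\half}{s-\kappa+1}{\half}$.) Consequently your Mellin--Parseval identity represents $\int_0^\infty e^{-y}\,W_{0,i\alpha}(y)W_{\pm k,i\beta}(y)\,y^{s-1}\,\dy$, not the integral defining $I_k$. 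This is not a cosmetic slip: the right-hand poles of your integrand produce gamma factors with \emph{full} arguments such as $\Gamma(s+1\pm i\alpha\pm i\beta)$, and no residue bookkeeping will convert these into the half-argument factors $\gamer{\frac14+\frac{i}{2}(\epsilon_1\alpha+\epsilon_2\beta+\gamma)}$, the $4^{i\gamma}$, or the $\frac{1}{2\pi}$ appearing in \eqref{eqn:hypergeom} --- those are the fingerprint of the unweighted product (for $k=0$ they come from the classical formula for $\int_0^\infty K_{i\alpha}(u)K_{i\beta}(u)u^{s-1}\,\du$).

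A correct version of your strategy is available: represent \emph{one} factor by its Barnes integral, $W_{k,i\beta}(y)=\frac{e^{-y/2}}{2\pi i}\int_{(c)}\frac{\gamer{z+i\beta+\half}\gamer{z-i\beta+\half}\gamer{-z-k}}{\gamer{\half+i\beta-k}\gamer{\half-i\beta-k}}\,y^{-z}\,\mathrm{d}z$, which carries the $e^{-y/2}$ inside it, integrate against $W_{0,i\alpha}(y)y^{s-1}$ using \cite[(7.621.11)]{GR15}, and then shift contours; this is precisely how \cite[(7.611.7)]{GR15} is derived, and is essentially the ``alternative'' you mention only in passing --- in truth it is the whole proof. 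Two smaller points: your assertion that the product is $O(y^{-1/2})$ at the origin is both incorrect (for real $\alpha,\beta$ each factor is $O(y^{1/2})$ up to logarithms, so the product is $O(y^{1-\e})$) and self-undermining, since $O(y^{-1/2})$ would make the integral diverge at $s=-\half+i\gamma$; and the recombination of the $+k$ and $-k$ pieces into the terminating ${}_4F_3$ with the extra denominator parameter $\half$ still needs to be carried out explicitly once the correct gamma data are in hand.
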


To obtain uniform bounds for the expression \eqref{eqn:hypergeom}, we first deal with the ratio of gamma functions.

\begin{lemma}
\label{lem:gammaStirling}
For $r,t \in \R$, we have that
\begin{multline*}
\frac{\gamer{\frac{1}{4}+\frac{i(2r + t)}{2}} \gamer{\frac{1}{4}+\frac{it}{2}}^2 \gamer{\frac{1}{4}+\frac{i(-2r + t)}{2}}}{\gamer{\half+ir}\gamer{\half-ir}\gamer{\half+it}}	\\
\ll \begin{dcases*}
(1 + |t|)^{-\frac{1}{2}} (1 + |2r + t|)^{-\frac{1}{4}} (1 + |2r - t|)^{-\frac{1}{4}} & if $|t| \leq 2|r|$,	\\
(1 + |t|)^{-\frac{1}{2}} (1 + |2r + t|)^{-\frac{1}{4}} (1 + |2r - t|)^{-\frac{1}{4}} e^{-\frac{\pi}{2}(|t| - 2|r|)} & if $|t| \geq 2|r|$.
\end{dcases*}
\end{multline*}
\end{lemma}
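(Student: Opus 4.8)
The plan is to apply Stirling's formula \eqref{eqn:Stirling} to each of the seven gamma factors in the quotient and then separately track the polynomial and exponential contributions. Recall that \eqref{eqn:Stirling} states $|\Gamma(\sigma + i\tau)| \asymp_{\sigma} (1 + |\tau|)^{\sigma - \frac{1}{2}} e^{-\frac{\pi}{2}|\tau|}$ for $\sigma > 0$, so each factor yields one power of $(1 + |\tau|)$ and one exponential $e^{-\frac{\pi}{2}|\tau|}$. Here every relevant real part $\sigma$ is either $\frac{1}{4}$ or $\frac{1}{2}$, hence bounded away from $0$, so all implied constants are absolute.

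First I would handle the polynomial parts. The three numerator factors have imaginary parts $r + \frac{t}{2}$, $\frac{t}{2}$ (occurring with multiplicity two), and $-r + \frac{t}{2}$, and each has $\sigma = \frac{1}{4}$, contributing exponent $\sigma - \frac{1}{2} = -\frac{1}{4}$. Writing $|r \pm \frac{t}{2}| = \frac{1}{2}|2r \pm t|$ and using the comparability $1 + \frac{1}{2}|x| \asymp 1 + |x|$, the numerator polynomial factor becomes $\asymp (1 + |2r + t|)^{-1/4} (1 + |t|)^{-1/2} (1 + |2r - t|)^{-1/4}$. The denominator factors all have $\sigma = \frac{1}{2}$ and thus exponent $0$, contributing no polynomial growth. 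This already reproduces precisely the three polynomial factors appearing in the claimed bound.

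Next I would treat the exponential parts, which is the source of the case split and which I expect to be the crux. The numerator contributes $e^{-\frac{\pi}{2}(|r + t/2| + |t| + |r - t/2|)}$, the middle term arising from the doubled factor, while the denominator contributes $e^{-\frac{\pi}{2}(2|r| + |t|)}$ from the imaginary parts $r$, $-r$, and $t$. The elementary identity $|a + b| + |a - b| = 2\max(|a|, |b|)$ with $a = r$ and $b = \frac{t}{2}$ gives $|r + \frac{t}{2}| + |r - \frac{t}{2}| = \max(2|r|, |t|)$, so the net exponential factor equals
\[
e^{-\frac{\pi}{2}\left(\max(2|r|, |t|) + |t| - 2|r| - |t|\right)} = e^{-\frac{\pi}{2}\left(\max(2|r|, |t|) - 2|r|\right)}.
\]
When $|t| \leq 2|r|$ the maximum is $2|r|$, making this factor $1$; when $|t| \geq 2|r|$ the maximum is $|t|$, making it $e^{-\frac{\pi}{2}(|t| - 2|r|)}$. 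Combining with the polynomial bound from the previous step yields exactly the two cases of the stated estimate. The only genuine difficulty is keeping the exponential bookkeeping organized; once the $\max$ identity is applied, the cancellation against the denominator is automatic.
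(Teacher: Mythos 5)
Your proposal is correct and is exactly the paper's approach: the paper's proof consists of the single sentence ``This follows from Stirling's formula \eqref{eqn:Stirling}'', and your argument is simply that computation carried out in full, with the polynomial exponents and the exponential cancellation via $|r+\tfrac{t}{2}|+|r-\tfrac{t}{2}|=\max(2|r|,|t|)$ both handled correctly.
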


\begin{proof}
This follows from Stirling's formula \eqref{eqn:Stirling}.
\end{proof}

Next, we bound the hypergeometric function in \eqref{eqn:hypergeom}.

\begin{lemma}
\label{lem:4F3bound}
For $k \in \Z$ and $r,t \in \R$, we have that
\[\pFq{4}{3}{-k,k,\frac{1}{4}+\frac{i(-2r + t)}{2},\frac{1}{4}+\frac{it}{2}}{\half, \half-ir, \half+it} \ll_k 1 + \left(\frac{1 + |2r - t|}{1 + |r|}\right)^{|k|}.\]
\end{lemma}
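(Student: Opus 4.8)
The plan is to exploit the fact that the numerator parameters $-k$ and $k$ force the defining series \eqref{eqn:hypergeometric} to terminate, reducing the ${}_4F_3$ to a \emph{finite} sum amenable to a term-by-term estimate. Since interchanging $-k$ and $k$ leaves the hypergeometric function unchanged, I would assume without loss of generality that $k \geq 0$, so that by \eqref{eqn:Pochhammer} one has $(-k)_m = 0$ for $m > k$ and only the terms with $0 \leq m \leq k$ survive. The resulting sum is
\[\sum_{m=0}^{k} \frac{(-k)_m (k)_m}{\left(\frac{1}{2}\right)_m m!} \cdot \frac{\left(\frac{1}{4}+\frac{i(-2r+t)}{2}\right)_m}{\left(\frac{1}{2}-ir\right)_m} \cdot \frac{\left(\frac{1}{4}+\frac{it}{2}\right)_m}{\left(\frac{1}{2}+it\right)_m},\]
and since every denominator parameter has positive real part, none of the denominator Pochhammer symbols vanish, so each term is well defined.

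The key structural observation is that in each surviving term the index satisfies $m \leq k$, so all integer shifts $\ell \in \{0,\ldots,m-1\}$ appearing in the Pochhammer products are bounded by $O_k(1)$; this is what permits a clean factor-by-factor comparison. First, the purely $k$-dependent ratio $(-k)_m (k)_m / ((\frac12)_m\, m!)$ is a quotient of quantities of size $O_k(1)$ and hence $\ll_k 1$. For the factor involving $t$ alone, I would bound the numerator above by $\lvert \frac14 + \ell + \frac{it}{2}\rvert \leq \frac14 + \ell + \frac{|t|}{2} \ll_k 1 + |t|$ and the denominator below by $\lvert \frac12 + \ell + it\rvert = \sqrt{(\frac12+\ell)^2 + t^2} \gg 1 + |t|$ (using $\sqrt{a^2+b^2} \geq (a+b)/\sqrt{2}$ for $a,b \geq 0$), so that this factor contributes $\ll_k 1$. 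The same reasoning applied to the mixed factor gives $\lvert \frac14 + \ell + \frac{i(-2r+t)}{2}\rvert \ll_k 1 + |2r-t|$ and $\lvert \frac12 + \ell - ir\rvert \gg 1 + |r|$, whence this factor is $\ll_k \frac{1+|2r-t|}{1+|r|}$. Taking the product over the $m$ factors, the $m$-th summand is $\ll_k \big(\frac{1+|2r-t|}{1+|r|}\big)^m$.

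It then remains to sum over $0 \leq m \leq k$. Setting $\rho \coloneqq \frac{1+|2r-t|}{1+|r|} \geq 0$, each power $\rho^m$ with $0 \leq m \leq k$ satisfies $\rho^m \leq \max(1,\rho^{k}) \leq 1 + \rho^{k}$, so the total is $\ll_k 1 + \rho^{k} = 1 + \big(\frac{1+|2r-t|}{1+|r|}\big)^{|k|}$, which is exactly the asserted bound. I do not expect a genuine obstacle here: the only delicate point is to extract \emph{uniform} upper bounds on the numerator Pochhammer symbols and matching lower bounds on the denominator ones, for which the elementary equivalence $\sqrt{a^2+b^2} \asymp a + b$ for $a,b \geq 0$ suffices. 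Conceptually, the growth in the final bound originates from the mismatch between the imaginary part $\frac{-2r+t}{2}$ of the numerator parameter and the imaginary part $-r$ of the paired denominator parameter $\frac12 - ir$, which produces the ratio $\frac{1+|2r-t|}{1+|r|}$; by contrast the $t$-dependent imaginary parts $\frac{t}{2}$ and $t$ are comparable and so cancel.
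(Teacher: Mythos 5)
Your proof is correct and follows essentially the same route as the paper's: both exploit the termination of the series at $m = |k|$ and bound each of the $O_k(1)$ surviving terms by $\bigl(\tfrac{1+|2r-t|}{1+|r|}\bigr)^m$, with the $t$-dependent numerator and denominator parameters cancelling. The only (immaterial) difference is that you estimate the Pochhammer ratios factor by factor with elementary inequalities, whereas the paper converts them to quotients of gamma functions and invokes Stirling's formula \eqref{eqn:Stirling}.
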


\begin{proof}
By \eqref{eqn:hypergeometric} and \eqref{eqn:Pochhammer}, the left-hand side is
\[\sum_{m = 0}^{|k|} \frac{\sqrt{\pi} |k| (-1)^m (|k| + m - 1)!}{(|k| - m)! \Gamma\left(\frac{1}{2} + m\right) m!} \frac{\Gamma\left(m + \frac{1}{4} + \frac{i(-2r + t)}{2}\right) \Gamma\left(m + \frac{1}{4} + \frac{it}{2}\right) \Gamma\left(\frac{1}{2} - ir\right) \Gamma\left(\frac{1}{2} + it\right)}{\Gamma\left(\frac{1}{4} + \frac{i(-2r + t)}{2}\right) \Gamma\left(\frac{1}{4} + \frac{it}{2}\right) \Gamma\left(\frac{1}{2} + m - ir\right) \Gamma\left(\frac{1}{2} + m + it\right)}.\]
By Stirling's formula \eqref{eqn:Stirling}, each summand is
\[\ll_k \left(\frac{1 + |2r - t|}{1 + |r|}\right)^m.\]
This yields the desired bounds.
\end{proof}

Combining Lemmata \ref{lem:gammaStirling} and \ref{lem:4F3bound}, we deduce the following bounds; these bounds are not sharp but are more than sufficient for our purposes.

\begin{corollary}
\label{cor:Ikrrtbound}
For $k \in \Z$ and $r,t \in \R$,
\[I_k(r,-r,t) \ll_k (1 + |t|)^{-\frac{1}{2}} (1 + |2r + t|)^{-\frac{1}{4}} (1 + |2r - t|)^{-\frac{1}{4}}.\]
\end{corollary}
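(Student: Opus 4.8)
The plan is to specialize the explicit evaluation \eqref{eqn:hypergeom} to the point $(\alpha,\beta,\gamma) = (r,-r,t)$ and then feed the two resulting pieces into Lemmata \ref{lem:gammaStirling} and \ref{lem:4F3bound}. First I would carry out the substitution. The four combinations $\epsilon_1\alpha + \epsilon_2\beta + \gamma$ with $\epsilon_1,\epsilon_2 \in \{\pm 1\}$ collapse to $t$, $t$, $2r + t$, and $-2r + t$, so the ratio of gamma functions in \eqref{eqn:hypergeom} becomes exactly
\[\frac{\gamer{\frac{1}{4}+\frac{i(2r+t)}{2}}\gamer{\frac{1}{4}+\frac{it}{2}}^2\gamer{\frac{1}{4}+\frac{i(-2r+t)}{2}}}{\gamer{\half+ir}\gamer{\half-ir}\gamer{\half+it}},\]
which is the quantity bounded in Lemma \ref{lem:gammaStirling}, while the ${}_4F_3$ becomes precisely the hypergeometric function estimated in Lemma \ref{lem:4F3bound}. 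Since $|4^{it}| = |(-1)^k| = 1$, the prefactor $\frac{(-1)^k 4^{it}}{2\pi}$ is harmless.

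It then remains to multiply the two estimates and check that the hypergeometric factor $1 + \bigl(\frac{1+|2r-t|}{1+|r|}\bigr)^{|k|}$ from Lemma \ref{lem:4F3bound} is absorbed into an $O_k(1)$ against the target factor $(1+|t|)^{-1/2}(1+|2r+t|)^{-1/4}(1+|2r-t|)^{-1/4}$ supplied by Lemma \ref{lem:gammaStirling}. I would split along the two regimes already present in that lemma. When $|t| \leq 2|r|$, the triangle inequality gives $|2r - t| \leq 2|r| + |t| \leq 4|r|$, so $\frac{1+|2r-t|}{1+|r|} \leq 4$ and the hypergeometric factor is $\ll_k 1$; multiplying the two bounds immediately yields the claim in this range.

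The crux is the regime $|t| \geq 2|r|$, where $|2r - t|$ can be comparable to $|t|$ and the hypergeometric factor genuinely grows. Here I would exploit the exponential gain $e^{-\frac{\pi}{2}(|t| - 2|r|)}$ provided by Lemma \ref{lem:gammaStirling}. Setting $s \coloneqq |t| - 2|r| \geq 0$, the estimate $|2r - t| \leq 2|r| + |t| = s + 4|r|$ gives $\frac{1+|2r-t|}{1+|r|} \leq 4(1 + s)$, so the hypergeometric factor is $\ll_k (1 + s)^{|k|}$. Because $e^{-\frac{\pi}{2}s}(1+s)^{|k|}$ is bounded uniformly over $s \geq 0$ for each fixed $k$, the exponential decay absorbs this polynomial growth, and the product of the two lemma bounds is again $\ll_k (1+|t|)^{-1/2}(1+|2r+t|)^{-1/4}(1+|2r-t|)^{-1/4}$. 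This interplay between the polynomial blow-up of the hypergeometric factor and the exponential decay in the gamma ratio is the only subtle point; everywhere else the argument is a direct substitution and bookkeeping.
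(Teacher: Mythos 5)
Your proposal is correct and is exactly the argument the paper intends: specialize \eqref{eqn:hypergeom} at $(\alpha,\beta,\gamma)=(r,-r,t)$, bound the gamma ratio by Lemma \ref{lem:gammaStirling} and the ${}_4F_3$ by Lemma \ref{lem:4F3bound}, and note that in the regime $|t|\geq 2|r|$ the exponential factor $e^{-\frac{\pi}{2}(|t|-2|r|)}$ absorbs the polynomial growth $\bigl(\tfrac{1+|2r-t|}{1+|r|}\bigr)^{|k|}\ll_k(1+|t|-2|r|)^{|k|}$. The paper leaves these details implicit ("Combining Lemmata \ref{lem:gammaStirling} and \ref{lem:4F3bound}\ldots"), so your write-up supplies precisely the missing bookkeeping.
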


\subsection{Holomorphic Case}

Here we instead seek to provide an upper bound for an integral of the form
\[I_{k,\ell}(r) \coloneqq \int_0^\infty \frac{W_{0,ir}(u)}{\Gamma\left(\frac{1}{2} + ir\right)} \frac{W_{k,\ell-\frac{1}{2}}(u)}{\sqrt{\Gamma(k + \ell) \Gamma(k - \ell + 1)}} u^{-\frac{1}{2} - ir} \, \frac{\du}{u},\]
where $k,\ell \in \N$ are positive integers for which $k \geq \ell$ and $r \in \R$.

\begin{lemma}
For $k,\ell \in \N$ for which $k \geq \ell$ and for $r \in \R$, we have that
\begin{multline}
\label{eqn:Ikellt}
I_{k,\ell}(r) = (-1)^{k - \ell} \sqrt{\frac{\pi}{2}} \frac{\sqrt{\Gamma(k + \ell) \Gamma(k - \ell + 1)}}{\Gamma\left(\frac{1}{2} + ir\right)}	\\
\times \sum_{m = 0}^{k - \ell} \frac{(-1)^m (\ell + m - 1)! \Gamma(\ell + m - 2ir)}{(k - \ell - m)! (2\ell + m - 1)! \Gamma\left(\frac{1}{2} + \ell + m - ir\right) m!}.
\end{multline}
\end{lemma}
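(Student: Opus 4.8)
The plan is to evaluate the one–dimensional integral $I_{k,\ell}(r)$ directly, exploiting the fact that both Whittaker functions in the integrand degenerate to elementary objects. First I would rewrite $W_{0,ir}(u) = \sqrt{u/\pi}\,K_{ir}(u/2)$ using the special value of $W_{0,\beta}$ recorded in Section~\ref{sec:fourier}, so that the integrand carries an exponentially decaying $K$–Bessel factor multiplied by a pure power of $u$. For the second Whittaker function I would use that, since $k \ge \ell$, the first parameter $\ell - k$ of the associated confluent hypergeometric function $U(\ell-k, 2\ell, u)$ is a nonpositive integer; consequently $W_{k,\ell-\frac{1}{2}}(u) = e^{-u/2} u^{\ell} U(\ell-k, 2\ell, u)$ is the product of $e^{-u/2} u^{\ell}$ with a \emph{terminating} polynomial of degree $k-\ell$ (a generalized Laguerre polynomial). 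This degeneration is exactly the vanishing recorded in the last case of the Pochhammer formula \eqref{eqn:Pochhammer}, and it is what forces the final answer to be a finite sum rather than a hypergeometric series.

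Having made these two substitutions, I would expand the terminating polynomial explicitly and interchange the resulting finite sum with the integral — legitimate since each term is absolutely integrable, the integrand being $O(u^{\ell-1})$ near $u=0$ (with $\ell \ge 1$) and exponentially small at infinity. This reduces $I_{k,\ell}(r)$ to a finite linear combination, with explicit binomial and factorial coefficients, of the single family of integrals $\int_0^\infty e^{-u/2} K_{ir}(u/2)\, u^{\sigma-1}\,\frac{\du}{u}$ with $\sigma = \ell + m$, $0 \le m \le k - \ell$. Note that this is precisely the same product-of-Whittaker-functions integral that appears in the proof of Lemma~\ref{lem:EisEishol}, so the evaluation mirrors that computation.

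Each such integral I would then evaluate in closed form exactly as in Lemma~\ref{lem:EisEishol}, via \cite[(7.621.11) and (9.237.3)]{GR15}. The key simplification is that the decay rate of the exponential weight $e^{-u/2}$ matches the scale of the Bessel function $K_{ir}(u/2)$, so the Gauss hypergeometric ${}_2F_1$ produced by the general Laplace–Bessel formula is evaluated at argument $0$ and collapses to $1$; what remains is a quotient of gamma functions of the shape $\Gamma(\ell+m)\,\Gamma(\ell+m-2ir)/\Gamma(\frac{1}{2}+\ell+m-ir)$. Substituting this back, recognizing $\Gamma(\ell+m) = (\ell+m-1)!$ and the binomial coefficient $\binom{k+\ell-1}{k-\ell-m} = \frac{(k+\ell-1)!}{(k-\ell-m)!\,(2\ell+m-1)!}$, reproduces the summand of \eqref{eqn:Ikellt}; the prefactors $(k-\ell)!$ and $(k+\ell-1)!$ from the Laguerre expansion combine with the normalizing factors $\sqrt{\Gamma(k+\ell)\Gamma(k-\ell+1)}$ and $\Gamma(\frac{1}{2}+ir)$ to give the stated leading constant.

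I expect the only real obstacle to be bookkeeping rather than analysis. Unlike Lemmata~\ref{lem:EisEishol} and~\ref{lem:EisMaassMaass}, there is no convergence or analytic-continuation difficulty, since the integral converges absolutely from the outset; all the delicacy lies in tracking constants. In particular, the hard part will be verifying that the termination index is exactly $k-\ell$, matching the Laguerre expansion coefficients correctly against the Pochhammer symbols in \eqref{eqn:Pochhammer}, and collecting the various powers of $\pi$ and factors of $2$ coming from the $K$–Bessel normalization $W_{0,ir}(u) = \sqrt{u/\pi}\,K_{ir}(u/2)$ together with the sign $(-1)^{k-\ell}$ and the factorial arising from the reduction of $U(\ell-k,2\ell,u)$ to a Laguerre polynomial. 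Getting this final constant exactly right is the only subtle point.
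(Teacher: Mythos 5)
Your proposal is correct and follows essentially the same route as the paper: expand $W_{k,\ell-\frac{1}{2}}(u) = (-1)^{k-\ell}(k-\ell)!\,e^{-u/2}u^{\ell}L_{k-\ell}^{(2\ell-1)}(u)$ as a terminating Laguerre polynomial via \cite[(9.237.3)]{GR15}, interchange the finite sum with the integral, and evaluate each term by \cite[(7.621.11)]{GR15}, which yields exactly the quotient $(\ell+m-1)!\,\Gamma(\ell+m-2ir)/\Gamma\bigl(\tfrac{1}{2}+\ell+m-ir\bigr)$ you describe. Your reformulation of the term-by-term integral as a collapsing Laplace--Bessel transform is just the mechanism underlying that tabulated identity, so there is no substantive difference in approach.
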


\begin{proof}
We use the fact that
\begin{align*}
W_{k,\ell-\frac{1}{2}}(u) & = (-1)^{k - \ell} (k - \ell)! e^{-\frac{u}{2}} u^{\ell} L_{k - \ell}^{(2\ell - 1)}(u)	\\
& = (-1)^{k - \ell} (k - \ell)! (k + \ell - 1)! \sum_{m = 0}^{k - \ell} \frac{(-1)^m}{(k - \ell - m)! (2\ell + m - 1)! m!} u^{\ell + m} e^{-\frac{u}{2}}
\end{align*}
from \cite[(8.970.1) and (9.237.3)]{GR15}, where $L_n^{(\alpha)}$ denotes the associated Laguerre polynomial, together with the identity
\[\int_0^\infty W_{0,ir}(u) e^{-\frac{u}{2}} u^{\ell + m - \frac{1}{2} - ir} \, \frac{\du}{u} = \frac{(\ell + m - 1)! \Gamma(\ell + m - 2ir)}{\Gamma\left(\frac{1}{2} + \ell + m - ir\right)}\]
from \cite[(7.621.11)]{GR15}, in order to obtain the desired identity.
\end{proof}

\begin{remark}
Via \eqref{eqn:hypergeometric} and \eqref{eqn:Pochhammer}, we may write $I_{k,\ell}(r)$ in the form
\[(-1)^{k - \ell} \sqrt{\frac{\pi}{2}} \frac{\sqrt{\Gamma(k + \ell)} \Gamma(\ell) \Gamma(\ell - 2ir)}{\sqrt{\Gamma(k - \ell + 1)} \Gamma(2\ell) \Gamma\left(\frac{1}{2} + ir\right) \Gamma\left(\frac{1}{2} + \ell - ir\right)} \pFq{3}{2}{\ell - k,\ell,\ell - 2ir}{2\ell,\frac{1}{2} + \ell - ir}.\]
One can show that this can alternatively be written as
\begin{multline*}
(-1)^{k - \ell} \sqrt{\frac{\pi}{2}} \frac{\Gamma(\ell)}{\sqrt{\Gamma(k + \ell) \Gamma(k - \ell + 1)}} \frac{\Gamma\left(\frac{1}{2} + k + ir\right) \Gamma(\ell - 2ir)}{\Gamma\left(\frac{1}{2} + ir\right)\Gamma\left(\frac{1}{2} + \ell + ir\right) \Gamma\left(\frac{1}{2} + \ell - ir\right)}	\\
\times \pFq{3}{2}{\ell - k,\frac{1}{2} + ir,\frac{1}{2} - ir}{\frac{1}{2} + \ell + ir,\frac{1}{2} + \ell - ir}.
\end{multline*}
However, we do not make use of these identities.
\end{remark}

We now bound $I_{k,\ell}(r)$.

\begin{corollary}
\label{cor:Ikellrbound}
For $k,\ell \in \N$ for which $k \geq \ell$ and $r \in \R$, we have that
\[I_{k,\ell}(r) \ll_{k,\ell} (1 + |r|)^{-\frac{1}{2}}.\]
\end{corollary}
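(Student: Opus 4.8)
The plan is to bound the explicit finite sum \eqref{eqn:Ikellt} for $I_{k,\ell}(r)$ term by term via Stirling's formula \eqref{eqn:Stirling}, tracking only the dependence on $r$ since $k$ and $\ell$ are held fixed. The summation index $m$ ranges over $0 \leq m \leq k - \ell$, a finite set depending only on $k$ and $\ell$, while the factorials together with the overall prefactor $\sqrt{\Gamma(k+\ell)\Gamma(k-\ell+1)}$ are constants that may be absorbed into the implied constant. Hence it suffices to show that each summand, including the global factor $1/\Gamma(\frac{1}{2} + ir)$, is $O_{k,\ell}((1+|r|)^{-\frac{1}{2}})$ uniformly in $r$.

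First I would isolate the three factors in each summand that depend on $r$: the global factor $1/\Gamma(\frac{1}{2} + ir)$, the gamma function $\Gamma(\ell + m - 2ir)$ in the numerator, and the gamma function $\Gamma(\frac{1}{2} + \ell + m - ir)$ in the denominator. Applying \eqref{eqn:Stirling} to each (with $\sigma$ the fixed positive real part and $\tau$ the imaginary part) yields $|\Gamma(\frac{1}{2}+ir)| \asymp e^{-\frac{\pi}{2}|r|}$, then $|\Gamma(\ell+m-2ir)| \asymp_{\ell,m} (1+|r|)^{\ell+m-\frac{1}{2}} e^{-\pi|r|}$ (using $1+2|r| \asymp 1+|r|$ to absorb the doubled imaginary part into the polynomial power), and finally $|\Gamma(\frac{1}{2}+\ell+m-ir)| \asymp_{\ell,m} (1+|r|)^{\ell+m} e^{-\frac{\pi}{2}|r|}$.

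The crux of the argument is that the exponential factors cancel exactly. Collecting the exponential contributions, the global factor gives $e^{+\frac{\pi}{2}|r|}$, the numerator $\Gamma(\ell+m-2ir)$ gives $e^{-\pi|r|}$, and the reciprocal of $\Gamma(\frac{1}{2}+\ell+m-ir)$ gives $e^{+\frac{\pi}{2}|r|}$, for a total of $e^{(\frac{\pi}{2}-\pi+\frac{\pi}{2})|r|} = 1$. Collecting the polynomial powers similarly gives $(1+|r|)^{\ell+m-\frac{1}{2}} \cdot (1+|r|)^{-(\ell+m)} = (1+|r|)^{-\frac{1}{2}}$, where the global factor contributes no polynomial power since its real part is $\frac{1}{2}$. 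Thus each summand is $\ll_{k,\ell} (1+|r|)^{-\frac{1}{2}}$, and summing over the finitely many values of $m$ yields the claimed bound.

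I expect the only point genuinely requiring care, rather than a deep obstacle, to be this exponential bookkeeping: the factor of $2$ in $\Gamma(\ell+m-2ir)$ is precisely what makes its decay $e^{-\pi|r|}$ match the combined growth $e^{+\pi|r|}$ coming from the two reciprocal gamma factors, so that the result neither blows up nor decays exponentially. An arithmetic slip here would spuriously produce an exponential in $|r|$; everything else is a routine invocation of Stirling's formula with implied constants depending harmlessly on $k$, $\ell$, and $m$.
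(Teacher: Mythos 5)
Your proposal is correct and matches the paper's proof, which likewise bounds each summand of \eqref{eqn:Ikellt} via Stirling's formula \eqref{eqn:Stirling}; your careful bookkeeping of the exponential cancellation $e^{\frac{\pi}{2}|r|} \cdot e^{-\pi|r|} \cdot e^{\frac{\pi}{2}|r|} = 1$ and the resulting power $(1+|r|)^{-\frac{1}{2}}$ is exactly the computation the paper leaves implicit. Nothing further is needed.
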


\begin{proof}
We simply bound each summand in \eqref{eqn:Ikellt} via Stirling's formula \eqref{eqn:Stirling}.
\end{proof}

\begin{remark}
\label{rem:uniform}
As highlighted in Remark \ref{rem:evenweight}, it would be desirable to prove bounds for $I_k(r,-r,t)$ that are uniform not only with respect to $r$ and $t$ but also with respect to $k$. Similarly, it would be desirable to prove bounds for $I_{k,\ell}(r)$ that are uniform not only with respect to $r$ but also with respect to $k$ and $\ell$. A closer examination of the method of proofs of Corollaries \ref{cor:Ikrrtbound} and \ref{cor:Ikellrbound} shows that these methods can be used to give bounds that grow exponentially with $k$, which is insufficient for our needs. Were we to fix every variable except $k$, then the methods in \cite{Fie65} can be used to obtain polynomial bounds for both of $I_k(r,-r,t)$ and $I_{k,\ell}(r)$ solely in the $k$-aspect; unfortunately, however, this is also insufficient for our needs.
\end{remark}

\end{document}